\documentclass[10pt,a4paper]{article}
\usepackage[utf8]{inputenc}
\usepackage[T1]{fontenc}
\usepackage[left=1in,right=2in,top=1in,bottom=1in]{geometry}
\usepackage{charter}
\usepackage[english]{babel}

\usepackage{amsmath}
\usepackage{amsfonts}
\usepackage{amssymb}
\usepackage{mathtools}
\usepackage{blkarray}
\usepackage{multirow}
\usepackage[affil-sl]{authblk}
\usepackage{wasysym}
\usepackage{graphicx}
\usepackage{xcolor}
\definecolor{darkred}{RGB}{150, 0, 0}
\definecolor{darkgreen}{RGB}{0, 150, 0}
\definecolor{darkblue}{RGB}{0, 0, 150}
\usepackage[breaklinks,colorlinks,citecolor=darkgreen,linkcolor=darkred,urlcolor=darkblue,bookmarks=false]{hyperref}
\usepackage{bookmark}
\usepackage[vlined,ruled,commentsnumbered,linesnumbered,norelsize]{algorithm2e}
\usepackage{amsthm}
\usepackage{thmtools}
\usepackage{thm-restate}
\usepackage[textsize=footnotesize]{todonotes}
\usepackage{enumerate}
\usepackage{etoolbox}
\usepackage{tikz}
\usepackage{booktabs}
\usepackage{placeins}
\usepackage{dsfont}
\usepackage[inline]{enumitem}
\usepackage{xspace}
\usepackage[font=footnotesize,labelfont=bf,skip=2pt]{caption}
\usepackage{subcaption}
\usepackage{calc}
\usepackage{siunitx}

\makeatletter
\newcounter{subsubparagraph}[subparagraph]
\renewcommand\thesubsubparagraph{%
  \thesubparagraph.\@arabic\c@subsubparagraph}
\newcommand\subsubparagraph{%
  \@startsection{subsubparagraph}
    {6}
    {\parindent}
    {3.25ex \@plus 1ex \@minus .2ex}
    {-1em}
    {\normalfont\normalsize\bfseries\em}}
\newcommand\l@subsubparagraph{\@dottedtocline{6}{10em}{5em}}
\newcommand{\subsubparagraphmark}[1]{}
\makeatother

\DeclareMathAlphabet{\mathrm}{OT1}{bch}{m}{n}

\DeclareMathOperator{\bigo}{\mathcal{O}}

\DeclareMathOperator{\ord}{ord}
\DeclareMathOperator{\supp}{supp}

\DeclareMathOperator{\ancestors}{Anc}

\DeclareMathOperator{\ent}{Ent}
\DeclareMathOperator{\restr}{restr}

\DeclareSIUnit{\byte}{B}

\newcommand{\symmetricgroup}[1]{\mathcal{S}_{#1}}

\newcommand{\blank}{\ensuremath{\_}}
\newcommand{\binary}{\{0, 1\}}
\newcommand{\binaryblank}{\{0, 1, \blank\}}

\newcommand{\xymresack}[1]{\mathcal{X}_{#1}}

\newcommand{\xymretope}[1]{\mathcal{X}_{#1}}
\newcommand{\id}{\mathrm{id}}
\newcommand{\ihat}{\hat{\imath}}
\newcommand{\icheck}{\check{\imath}}
\newcommand{\func}[1]{\ensuremath{\text{\textsc{#1}}}\xspace}
\newcommand{\True}{\func{True}}
\newcommand{\False}{\func{False}}
\newcommand{\Feasible}{\func{Feasible}}
\newcommand{\Infeasible}{\func{Infeasible}}

\newcommand{\Break}{\textbf{break}}

\newcommand{\gen}[1]{\langle #1 \rangle}
\newcommand{\sym}[1]{\symmetricgroup{#1}}
\newcommand{\orbit}[2]{\mathrm{orbit}(#2, #1)}
\newcommand{\stab}[2]{\mathrm{stab}(#1, #2)}
\newcommand{\STAB}[2]{\mathrm{STAB}(#1, #2)}

\renewcommand{\P}{\ensuremath{\mathrm{P}}}

\newcommand{\coNP}{\ensuremath{\mathrm{coNP}}}

\newcommand{\eqp}[1]{=_{#1}}

\newcommand{\precp}[1]{\prec_{#1}}
\newcommand{\preceqp}[1]{\preceq_{#1}}
\newcommand{\succp}[1]{\succ_{#1}}
\newcommand{\succeqp}[1]{\succeq_{#1}}

\newcommand{\queue}{\mathcal{Q}}
\newcommand{\tree}{\mathcal{T}}

\newcommand{\eqconj}[2]{\ensuremath{E_{#1}^{#2}}}
\newcommand{\infconj}[2]{\ensuremath{D_{#1}^{#2}}}

\newcommand{\R}{\mathds{R}}
\newcommand{\B}[1]{\{0,1\}^{#1}}

\newcommand{\perm}{\gamma}
\newcommand{\group}{\Gamma}
\newcommand{\inv}[1]{{#1}^{-1}}
\newcommand{\pig}{\perm\in\group}

\newcommand{\define}{\coloneqq}
\newcommand{\card}[1]{|#1|}
\newcommand{\T}{^{\top}}
\newcommand{\sprod}[2]{{#1}\T {#2}}
\newcommand{\st}{:}
\newcommand{\solver}[1]{\texttt{#1}}
\newcommand{\scip}{\solver{SCIP}\xspace}

\setlength{\marginparwidth}{2in-.25in}

\theoremstyle{plain}
\newtheorem{theorem}{Theorem}[section]
\newtheorem{lemma}[theorem]{Lemma}
\newtheorem{proposition}[theorem]{Proposition}
\newtheorem{corollary}[theorem]{Corollary}
\newtheorem*{claim*}{Claim}
\newtheorem{claim}[theorem]{Claim}
\newtheorem{observation}[theorem]{Observation}
\theoremstyle{definition}
\newtheorem{remark}[theorem]{Remark}

\newtheorem{example}[theorem]{Example}
\newtheorem{property}[theorem]{Property}

\newenvironment{claimproof}
    {\begin{proof}}{ \end{proof}}

\usetikzlibrary{shapes.geometric}
\usetikzlibrary{arrows}

\tikzstyle{cond} = [draw, thick, diamond, inner sep=0pt, text
width=2mm*sqrt(2),
align=center]
\tikzstyle{root} = [draw, thick, regular polygon, regular polygon sides=4,
inner sep=0pt,
text width=2mm, align=center]
\tikzstyle{loos} = [draw, thick, regular polygon, regular polygon sides=4,
inner sep=0pt,
text width=2mm, align=center]
\tikzstyle{necc} = [draw, thick, circle, inner sep=0pt, text
width=2mm*sqrt(2),
align=center]
\tikzset{edge/.style = {thick, ->,> = latex'}}

\let\OLDthebibliography\thebibliography
\renewcommand\thebibliography[1]{
  \OLDthebibliography{#1}
  \setlength{\parskip}{.2em}
  \setlength{\itemsep}{.2em plus 0.3ex}
}

\def\hyph{-\penalty0\hskip0pt\relax}
\hyphenation{con-junc-tions}

\author{Jasper van Doornmalen}
\author{Christopher Hojny}
\title{Efficient Propagation Techniques for Handling Cyclic Symmetries in
  Binary Programs}
\affil{Combinatorial Optimization Group, Eindhoven University of Technology\\
  email: \{m.j.v.doornmalen, c.hojny\}@tue.nl}
\date{}

\begin{document}
\maketitle

\begin{abstract}
  The presence of
  symmetries of binary programs typically degrade the performance of
  branch-and-bound solvers.
  In this article, we derive efficient variable fixing algorithms to
  discard symmetric solutions from the search space based on propagation
  techniques for cyclic groups.
  Our algorithms come with the guarantee to find all possible variable
  fixings that can be derived from symmetry arguments, i.e., one cannot
  find more variable fixings than those found by our algorithms.
  Since every permutation symmetry group of a binary program has cyclic
  subgroups,  the derived algorithms can be used to handle symmetries in
  any symmetric binary program.
  In experiments we also provide numerical evidence that our
  algorithms handle symmetries more efficiently than other variable fixing
  algorithms for cyclic symmetries.%

  \textbf{2020 Mathematics Subject Classification:} 90C09, 90C27, 90C57\\
  \textbf{Keywords:} symmetry handling, cyclic group, propagation,
  branch-and-bound
\end{abstract}
\section{Introduction}

We consider binary programs~$\max\{ \sprod{c}{x} \st Ax \leq b,\;
x \in \B{n}\}$, with~$A \in \R^{m \times  n}$,
$b \in \R^m$, and~$c \in \R^n$ for some positive integers~$m$ and~$n$.
A standard method to solve binary programs is branch-and-bound,
which iteratively explores the search space by splitting the initial binary
program into subproblems, see Land and Doig~\cite{LandDoig1960}.
Although branch-and-bound can solve binary
programs with thousands of variables and constraints rather efficiently,
the performance of branch-and-bound usually degrades drastically if
symmetries are present because it unnecessarily explores symmetric
subproblems.
Such a \emph{symmetry} is a permutation~$\perm$ of~$[n] \define
\{1, \dots, n\}$ that acts on a vector~$x \in \R^n$ by permuting its
coordinates, i.e., $\perm(x) \define
(x_{\inv{\perm}(1)},\dots,x_{\inv{\perm}(n)})$, and that adheres to the
following two properties:
\begin{enumerate*}[label=\textbf{\emph{\small(\roman*)}},
ref=\emph{\small(\roman*)}]
\item it maps feasible solutions to feasible solutions, i.e., $Ax \leq b$
  if and only if~$A\perm(x) \leq b$, and
\item it preserves the objective value, i.e., $\sprod{c}{x} =
  \sprod{c}{\perm(x)}$.
\end{enumerate*}
Two solutions~$x$ and~$y$ are symmetric if there exists a symmetry~$\perm$
such that~$y = \perm(x)$.

Various methods to remove symmetric parts from the search space have
been proposed in the literature, ranging, among others, from cutting
planes, variable fixing or branching rules, or propagation methods, see
below for references.
The common ground of many of these methods is to impose a lexicographic
order on the solution space and to exclude solutions that are not
lexicographically maximal in their symmetry class.
This approach removes all symmetric copies of a solution, and thus, handles
all symmetries.
However, deciding whether a solution is lexicographically maximal in its
symmetry class is coNP-complete~\cite{babai1983canonical}.
This makes lexicographic order based methods often computationally
expensive, since no generally applicable polynomial-time algorithms for
such methods exist, unless~$\text{P} = \text{coNP}$.
For this reason, one typically weakens the requirement of removing all
symmetric copies, or investigates symmetry handling methods for particular
groups~$\group$ for which methods exist that run in polynomial time.

In this article, we follow the latter approach by investigating propagation
techniques, whose idea is as follows.
If we are given a subproblem, some of the variables might have been fixed,
e.g., due to branching decisions.
For a symmetry~$\perm$, a propagation algorithm looks for further
variables that need to be fixed to guarantee that a solution~$x$ that
adheres to the fixings of the subproblem is not lexicographically smaller
than the permuted vector~$\perm(x)$.
Of course, if we are given a set of permutations~$\Pi$, then this
propagation step can be carried out for every~$\perm \in \Pi$.
Since symmetry groups may have size~$2^{\Omega(n)}$, however,
blindly applying propagation for each individual permutation is
computationally intractable.

Although the full symmetric group has exponential order, polynomial time
propagation algorithms for certain actions of full symmetric groups have
been designed~\cite{BendottiEtAl2021,KaibelEtAl2011}.
To the best of our knowledge, however, it seems that no efficient
propagation algorithms for cyclic groups, i.e., groups generated by a
single permutation, are known.
At first glance, finding algorithms for cyclic groups seems to be trivial
as cyclic shifts have a very simple structure.
But despite the simplicity of cyclic shifts, we have no understanding of
the structure of  binary points being lexicographically maximal with
respect to cyclic group actions.
In fact, the structure of these points is rather complicated and does not
seem to follow an obvious pattern, see~\cite[Chap.~3.2.2]{Loos2010}.
It has been an open problem for at least ten years to gain further insights
into the structure of lexicographically maximal points for cyclic groups.

We believe that this is an important gap, because every permutation
group~$\group$ has cyclic subgroups.
Thus, instead of applying propagation for individual permutations, we can
apply propagation for corresponding cyclic subgroups to find stronger
reductions.
In particular, knowledge on cyclic groups can be used for \emph{every}
symmetric binary program, whereas algorithms for symmetric groups need
assumptions on~$\group$.
We emphasize that, although cyclic groups~$\group$ are generated by a single
permutation~$\perm$, they also might have superpolynomial size%
\footnote{For example, if~$\perm$ has~$k$ disjoint cycles of mutually
distinct prime lengths.}.
That is, efficient algorithms for cyclic groups are not immediate.

\paragraph{Literature Review}
Handling symmetries in binary programs via propagation is not a novel
technique.
It originates from constraint programming, and symmetry handling
techniques in this context are discussed, among others,
in~\cite{CrawfordGinsbergLuksRoy1996,KatsirelosNarodytskaWalsh2009,KatsirelosEtAl2010,NarodytskaWalsh2013}.
For binary programs,
Bendotti et al.~\cite{BendottiEtAl2021} describe an algorithm to find
variable fixings for certain actions of symmetric groups.
Further fixings can be found if the variables affected by the symmetric
group are contained in set packing or partitioning constraints, see Kaibel
et al.~\cite{KaibelEtAl2011}.
Moreover, if instead of an entire group only the action of a single
permutation is considered, propagation algorithms for so-called symresacks
can be used~\cite{BestuzhevaEtal2021OO,HojnyPfetsch2019}.
These algorithms are \emph{complete}
in the sense that they find all possible
symmetry-based variable fixings derivable from a set of fixed variables.
In contrast to this, orbital fixing~\cite{Margot2003,OstrowskiEtAl2011}
can be used for arbitrary groups, however, without any guarantee on
completeness.
Margot~\cite{Margot2002,Margot2003} presents isomorphism pruning, a
propagation technique to prune nodes of a branch-and-bound tree that do
not contain lexicographically maximal solutions.

Besides propagation, further methods for handling symmetries in binary
programs exist such as cutting
planes~\cite{Friedman2007,Hojny2020,HojnyPfetsch2019,KaibelPfetsch2008,Liberti2008,Liberti2012a,Liberti2012,LibertiOstrowski2014,Salvagnin2018},
branching rules~\cite{OstrowskiAnjosVannelli2015,OstrowskiEtAl2011},
or model reformulations~\cite{FischettiLiberti2012}.

\paragraph{Contribution}
The aim of this article is to devise efficient propagation algorithms for
cyclic groups.
To this end, we derive an auxiliary result for arbitrary sets~$\Pi$ of
permutations first.
As mentioned above, we can find variable fixings by propagating each
individual permutation~$\perm \in \Pi$ using symresack propagation.
As mentioned in~\cite{BestuzhevaEtal2021OO}, a single symresack can be
propagated in~$O(n)$ time.
Thus, we can
find all fixings derived from all individual permutations
in~$\bigo(n^2\card{\Pi})$ time, because there
are~$n$ potential fixings and each might trigger another round of
propagating~$\Pi$.
In Section~\ref{sec:completepermset}, we improve this running time
to~$\bigo(n\card{\Pi})$ by introducing suitable data structures and a
careful analysis of dependencies among the permutations in~$\Pi$.
This result forms the basis for our efficient algorithms for cyclic groups
that we derive in Section~\ref{sec:lexleadersgroup}.
To this end, we provide a novel characterization of lexicographically
maximal elements in certain cyclic groups.
This characterization is then used to derive our efficient algorithms for a
broad class of cyclic groups.
We in particular show that our algorithms find all possible variable
fixings, i.e., they are as strong as possible.
In Section~\ref{sec:computational}, we report on numerical results on a
broad test set containing both instances with many cyclic symmetries and
general benchmark instances.
If cyclic symmetries are the dominant type of symmetries, these experiments
show that our methods outperform the immediate approach of handling all
permutations individually.
For ease of presentation, we skip most proofs in the discussion of
Section~\ref{sec:completepermset}; the missing proofs are then provided in
Appendix~\ref{sec:proofs}.

\paragraph{Basic Definitions and Notation}
Throughout this article, we assume that~$n$ is a positive integer.
Given~$k \in [n+1] \define \{ 1, \dots, n + 1 \}$
and vectors~$x, y \in \R^n$, we say that~$x =_k y$ if
and only if~$x_i = y_i$ for all~$i \in [k-1]$.
To decide whether~$x$ and~$y$ can be distinguished \emph{up to position~$k$},
we write~$x \succ_k y$ if and only if there exists~$i \in [k-1]$
such that~$x =_i y$ and~$x_i > y_i$.
The relation~$x \succeq_k y$ holds if~$x =_k y$ or~$x \succ_k y$.
These relations define the \emph{partial lexicographic order up to $k$}.
When~$k = n+1$, we write~$=$, $\succ$, and~$\succeq$ instead of~$=_k$,
$\succ_k$, and~$\succeq_k$, respectively.
In this case, we say that~$x$ is \emph{equal to}, \emph{lexicographically
  greater}, and \emph{lexicographically not smaller} than~$y$,
respectively.

Let~$\sym{n}$ be the \emph{symmetric group} on~$[n]$.
For~$\perm \in \sym{n}$, the set of all binary vectors that are
lexicographically not smaller than their images~$\perm(x)$ is denoted by~$\xymresack{\perm} \define \{x \in \B{n}
\st x \succeq \perm(x)\}$.
Moreover, for~$\Pi \subseteq \symmetricgroup{n}$, denote
$\xymretope{\Pi} \define \bigcap_{\perm \in \Pi}
\xymresack{\perm}$.
Analogously, we define~$\xymresack{\perm}^{(k)}$ and~$\xymretope{\Pi}^{(k)}$
if we use the relation~$\succeq_k$ instead of~$\succeq$.
If the set~$\Pi$ defines a group, we typically use the
symbol~$\Gamma$ to denote this.

If the generating permutations of $\Gamma$ are $\gamma_1, \dots, \gamma_m$
for some $m \in \mathbb N$, then this is denoted
with angle brackets~$\Gamma \define \gen{\gamma_1, \dots, \gamma_m}$.
If permutations are defined explicitly, we always use the cycle
representation.
For disjoint sets~$I_0, I_1 \subseteq [n]$, we define~%
$F(I_0, I_1) \define \{ x \in \binary^n : x_i = 0\ \text{for}\ i \in I_0\
\text{and}\ x_i = 1\ \text{for}\ i \in I_1 \}$.
The sets~$I_0$ and~$I_1$ thus define the indices of binary (solution)
vectors that are fixed to~0 and~1, respectively.
The situation where the entry~$x_i$, $i \in [n]$, of a vector~$x$
is fixed to a value~$b \in \binary$ is called a \emph{fixing},
and we denote this by a tuple~$f = (i, b) \in [n] \times \binary$.
By a slight abuse of terminology, we say in the following that entry~$i$
gets fixed rather than entry~$x_i$ of vector~$x$ to keep notation short.
The \emph{converse fixing} of $f = (i, b)$
is denoted by~$\bar f \define (i, 1-b)$.
A set of fixings $C \subseteq [n] \times \binary$
is called a \emph{conjunction},
and we define~%
$
V(C)
\define
\{
x \in \binary^n
:
x_i = b \
\text{for}\
(i, b) \in C
\}
$
as the set of binary vectors respecting the fixings in conjunction~$C$.

To handle symmetries, the main goal of this article is to find, given a set
of initial fixings~$I_0$ and~$I_1$ larger sets~$I'_0$ and~$I'_1$
with~$\xymretope{\Pi} \cap F(I'_0, I'_1) = \xymretope{\Pi} \cap F(I_0,
I_1)$.
Once we have identified such sets~$I'_0$ and~$I'_1$, the variables in~$I'_0
\setminus I_0$ and~$I'_1 \setminus I_1$ can be fixed to~0 and~1,
respectively.
Thus, we can derive variable fixings based on symmetry considerations.
To obtain the strongest effect, we are interested in sets~$I'_0$
and~$I'_1$ being as large as possible.
Note that the largest sets, denoted~$I^\star_0$ and~$I^\star_1$, are
unique:
Let~$C$ be the conjunction representing the fixings~$(I_0,I_1)$.
Let~$X_\Pi(C) \define \{ x \in \xymretope\Pi :
x_i = b\ \text{for}\ (i, b) \in C \}$
be the set of binary vectors in $\xymretope\Pi$ respecting the
fixings in~$C$.
For a set~$X \subseteq \binary^n$, let~$\mathcal{C}(X)
\define \{ (i, b) \in [n] \times \binary :
x_i = b\ \text{for all}\ x \in X \}$ be the set of fixings in~$X$.
Then~$\mathcal{C}(X_\psi(C))$ is the unique largest conjunction of fixings for
initial fixings $C$ with respect to $\xymretope\Pi$, from which we
derive~$I^\star_0$ and~$I^\star_1$.

For a subgroup~$\group$ of~$\sym{n}$, denoted~$\group \leq \sym{n}$, we
frequently use stabilizer subgroups.
Given a set~$I \subseteq [n]$, the \emph{pointwise stabilizer}
is~$\STAB{I}{\group} \define \{ \pig \st \perm(i) = i,\; i \in I\}$.
The \emph{setwise stabilizer} is~$\stab{I}{\group} \define \{ \pig \st
\perm(i) \in I,\; i \in I\}$.
For singleton sets, we write~$\STAB{i}{\group}$ and~$\stab{i}{\group}$
instead of~$\STAB{\{i\}}{\group}$ and~$\stab{\{i\}}{\group}$, respectively.
The \emph{orbit} of a solution~$x$ with respect to a group~$\group$
is~$\{\perm(x) \st \pig\}$.
Last, for a permutation~$\gamma \in \symmetricgroup{n}$,
the \emph{restriction of $\gamma$ to $I$},~$\delta = \restr(\gamma, I)$,
satisfies $\delta(i) = \gamma(i)$ for $i \in I$
and $\delta(i) = i$ for $i \notin I$.
For groups $\Gamma \leq \symmetricgroup{n}$, we denote
$\restr(\Gamma, I) \define \{ \restr(\gamma, I) : \gamma \in \Gamma \}$.
Note that~$\delta \in \sym{n}$ if and only if~$\gamma(I) = I$,
and that~$\restr(\Gamma, I) \leq \sym{n}$
if and only if~$I$ corresponds to the union of orbits of elements from $I$.

\section{Propagation of Individual Permutations In a Set}
\label{sec:completepermset}

The main goal of this article is to devise efficient propagation algorithms
that enforce a solution to be lexicographically maximal in its orbit
with respect to a cyclic group.
As we will see in the next section, the main workhorse of these algorithms
is an efficient subroutine that, for a given set of permutations~$\Pi$,
propagates~$x \succeq \perm(x)$ for all~$\perm \in \Pi$.
To make this statement precise, we introduce the following terminology and
notation.

Let $\Pi \subseteq \symmetricgroup{n}$,
and $I_0, I_1 \subseteq [n]$ be disjoint.
Our aim is to find larger
sets $I_0', I_1'$
with
$
\xymretope\Pi \cap F(I_0, I_1)
=
\xymretope\Pi \cap F(I_0', I_1')
$
by iteratively \emph{applying valid fixings}
for the constraints $x \succeq \gamma(x)$
for each $\gamma \in \Pi$.
A \emph{fixing} is a tuple $(i, b) \in [n] \times \binary$
that encodes the situation where the value of entry~$i$ is fixed to~$b$.
We say that a fixing is \emph{valid} for a permutation~$\gamma \in \Pi$
and a disjoint set of fixings $I_0, I_1 \subseteq [n]$
if all $x \in \xymretope\gamma \cap F(I_0, I_1)$
have~$x_i = b$.
Such a fixing $(i, b)$ is \emph{applied} if entry $i$ is added to the
index set $I_b$.
This way, the simple Observation~\ref{obs:fixing} below shows how
additional fixings can be found.
If no further valid fixing can be found by considering any individual
constraint $x \succeq \gamma(x)$ for $\gamma \in \Pi$,
then this is a
\emph{complete set of fixings for each permutation in $\Pi$},
denoted by $I_0', I_1'$.
We want to stress that these do not need to be the complete set of fixings
for~$\xymretope\Pi \cap F(I_0, I_1)$: more fixings could exist, as we will
demonstrate in Example~\ref{ex:indivpropnotcomplete}.

Using this terminology, this section's goal is to find an efficient
algorithm to determine the complete set of fixings for all~$\perm \in
\Pi$.
As mentioned in the introduction, a trivial running time of such an
algorithm is~$\bigo(n^2 \card{\Pi})$.
By introducing suitable data structures and implications among the
different permutations in~$\Pi$, however, we show that the running time can
be reduced to~$\bigo(n \card{\Pi})$.
To develop our algorithm, we make use of the following simple observation.

\begin{observation}
  \label{obs:fixing}
  Let~$\Pi \subseteq \sym{n}$ and~$I_0, I_1 \subseteq [n]$ be disjoint.
  Suppose we want to propagate~$x \succeq \perm(x)$ for all~$\perm \in
  \Pi$.
  Then, $i \in [n] \setminus (I_0 \cup I_1)$ can be added to~$I_0$
  (resp.~$I_1$) if and only if every~$x \in F(I_0, I_1)$ with~$x_i = 1$
  (resp.~$x_i = 0$) satisfies~$x \prec \perm(x)$ for some~$\perm \in \Pi$.
\end{observation}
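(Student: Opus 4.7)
The plan is to view this observation as a direct unpacking of the definitions of $\xymretope{\Pi}$, of $F(I_0, I_1)$, and of a valid fixing, combined with the fact that the lexicographic order on $\B{n}$ is total. Because the paragraph preceding the observation already defines what it means for a fixing to be valid, the proof should really be a short chain of equivalences.

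First I would rewrite the condition ``$i$ can be added to $I_0$''. By the definition of valid fixing given in the paragraph before the observation, a fixing $(i, 0)$ is valid precisely when every $x \in \xymretope{\Pi} \cap F(I_0, I_1)$ satisfies $x_i = 0$, i.e., when
\[
\xymretope{\Pi} \cap F(I_0, I_1) \cap \{ x \in \B{n} : x_i = 1 \} \;=\; \emptyset.
\]
Since $F(I_0, I_1) \cap \{ x : x_i = 1 \}$ is exactly the set of $x \in F(I_0, I_1)$ with $x_i = 1$, the emptiness above is equivalent to asserting that no such $x$ lies in $\xymretope{\Pi}$.

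Second, I would unfold $\xymretope{\Pi} = \bigcap_{\perm \in \Pi} \xymresack{\perm}$ with $\xymresack{\perm} = \{ x \in \B{n} : x \succeq \perm(x) \}$. The previous emptiness statement then becomes: every $x \in F(I_0, I_1)$ with $x_i = 1$ fails $x \succeq \perm(x)$ for at least one $\perm \in \Pi$. The last, essentially cosmetic, step is to observe that $\succeq$ on $\B{n}$ is a total order, so $x \not\succeq \perm(x)$ is equivalent to $x \prec \perm(x)$, which produces exactly the stated condition. The argument for adding $i$ to $I_1$ is identical after interchanging the roles of $0$ and $1$.

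There is no real obstacle in this proof; the only point worth highlighting is the totality of the lexicographic order on $\B{n}$, which guarantees the equivalence of $x \not\succeq \perm(x)$ and $x \prec \perm(x)$ used in the final step. Everything else is a direct rewriting of the definitions introduced in the preceding paragraphs.
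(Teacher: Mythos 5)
Your proof is correct and matches the intended reasoning: the paper treats this observation as immediate from the definitions of a valid fixing, $\xymretope{\Pi} = \bigcap_{\perm\in\Pi}\xymresack{\perm}$, and $F(I_0,I_1)$, and gives no separate proof. Your chain of equivalences, including the explicit appeal to totality of the lexicographic order on $\B{n}$ to pass from $x \not\succeq \perm(x)$ to $x \prec \perm(x)$, is exactly the right unpacking.
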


Consequently, if~$\xymretope{\Pi} \cap F(I_0, I_1) \neq \emptyset$
and~$\xymretope{\Pi} \cap F(I_0 \cup \{i\}, I_1) = \emptyset$, we know
that~$i$ can be fixed to~1 (and analogously we can argue for 0-fixings).
Since adding~$i$ to~$I_0$ makes the latter set empty, we refer to
such a fixing as an \emph{infeasibility} fixing.
To algorithmically exploit Observation~\ref{obs:fixing}, we are thus
interested in finding infeasibility fixings~$(i,b)$ as~$(i, 1-b)$ is then a
valid fixing.
For our algorithm, it will turn out that also considering sets of fixings that
lead to infeasibility, if applied simultaneously, are of importance.
As mentioned in the introduction, these sets are referred to as
\emph{conjunctions}.
\emph{Inf-conjunctions} are sets of fixings that yield infeasibility if
all fixings of the set are applied.
Moreover, we specify special types of inf-conjunctions.
Let~$k \in [n + 1]$ and $x \in \binary^n$.
Note that~$x \precp{k} \gamma(x)$ implies~$x \prec \gamma(x)$,
and that equivalence holds if $k = n + 1$.
A \emph{$k$-inf-conjunction} is a conjunction
$C \subseteq [n] \times \binary$
such that
all $x \in F(I_0, I_1)$
with $x_i = b$ for $(i, b) \in C$
have $x \precp{k} \gamma(x)$.
Note that $C$ is also an inf-conjunction for $\gamma$.

\begin{algorithm}[!tbp]
\caption{Determine the complete set of fixings for each individual
constraint $x \succeq \gamma(x)$ for all $\gamma \in \Pi$.}
\label{alg:propagateallindividualcons}
\SetKwInOut{Input}{input}
\SetKwInOut{Output}{output}
\SetKwBlock{RepeatWithoutUntil}{repeat}{}
\Input{set of permutations $\Pi \subseteq \symmetricgroup{n}$,
and initial set of fixings $(I_0, I_1)$}
\Output{\Infeasible if an empty inf-conjunction for some~$\gamma \in \Pi$
is found by the algorithm,
or \Feasible and the set of fixings that is complete for each individual
permutation in $\Pi$.}
\lIf{$F(I_0, I_1) = \emptyset$}
{
  \Return\Infeasible
}
\vspace{.5em}
$t \gets 0$;\
$(I_0^t, I_1^t) \gets (I_0, I_1)$\;
\lForEach{$\gamma \in \Pi$}{
  $i_\gamma \gets 1$%
}
\vspace{.5em}
\While{there is a $\gamma \in \Pi$
not satisfying sufficient conditions for completeness}
{
  \label{alg:propagateallindividualcons:select}
  $i_\gamma \gets i_\gamma + 1$;\quad
  $t \gets t + 1$\;
  \label{alg:propagateallindividualcons:indexincr}
  \RepeatWithoutUntil{
    \lIf{there is a $\delta \in \Pi$
      with $i_\delta$-inf-conjunction $\emptyset$}
    {
      \Return \Infeasible
    }
    \label{alg:propagateallindividualcons:inf}
    \ElseIf{
      there is a $\delta \in \Pi$ with
      $i_\delta$-inf-conjunction $\{(i, b)\}$ and $i \notin I_{1-b}^t$
    }
    {
      \label{alg:propagateallindividualcons:whilefix}
      Apply fixing $(i, 1-b)$:
      $
      (I_b^{t + 1}, I_{1-b}^{t + 1})
      \gets
      (I_b^{t}, I_{1-b}^{t} \cup \{ i \})
      $;\quad
      $t \gets t + 1$\;
      \label{alg:propagateallindividualcons:varfixing}
    }
    \lElse
    {
      \Break\ repeat-loop%
      \label{alg:propagateallindividualcons:break}
    }
  }
}
\vspace{.5em}
\Return \Feasible, $(I_0^t, I_1^t)$\;
\end{algorithm}

Algorithm~\ref{alg:propagateallindividualcons} describes how additional
fixings can be found.
To simplify the analysis,
we maintain a timestamp~$t$, starting at~0.
Also, for each permutation~$\gamma \in \Pi$,
the index until which the partial lexicographic is considered
is~$i_\gamma$, which is initialized at~1.
If a time-specification is needed,
the value of $i_\gamma$ at time~$t$ is denoted by $i_\gamma^t$.
The set of fixings at this time is denoted by~$I_0^t$ and~$I_1^t$.
The idea of our algorithm
is to iterate over permutations from $\Pi$ for which we can potentially
find further variable fixings.
It checks whether there exists a permutation~$\perm$ in this list that
admits an inf-conjunction consisting of at most a single element:
If there is an empty inf-conjunction for~$\perm$, then~$\xymresack{\perm}
\cap F(I_0, I_1) = \emptyset$ and the algorithm terminates since
infeasibility has been determined.
Otherwise, for all inf-conjunctions~$\{(i,b)\}$ that can be found for one
permutation in the list, the algorithm applies the fixing~$(i, 1-b)$.
To be able to find inf-conjunctions efficiently, the algorithm does not
check for the existence of arbitrary inf-conjunctions.
Instead, only inf-conjunctions are checked that certify infeasibility for a
partial lexicographic order.
To make this precise, we introduce the following terminology.

Note that Algorithm~\ref{alg:propagateallindividualcons} is not
practically applicable yet, because it does not specify details on how to
execute it.
In the remainder of this section, we provide these missing details.
In particular, we derive structural properties of inf-conjunctions and
develop efficient data structures that allow us to execute the algorithm
in~$\bigo(n \card\Pi)$ time.
Before doing so, we provide an example that illustrates the execution of
this algorithm, and prove that this algorithm is correct if it terminates.

\begin{example}
\label{ex:algexample}
Let~$\gamma_1 = (1, 6, 8, 4, 7, 2, 5)$, $\gamma_2 = (1, 3, 6, 2, 4, 5)$,
$\Pi = \{ \gamma_1, \gamma_2 \}$,
and let the initial fixings be~$I_0 = \{ 4, 6 \}$ and~$I_1 = \{ 5 \}$
encoded by~$x = (\blank, \blank, \blank, 0, 1, 0, \blank, \blank)$,
where~$\blank$ represents an unfixed entry. We execute a few steps of the
algorithm, and the fixing updates are shown in
Figure~\ref{fig:ex:algexample}.
More precisely, we discuss which permutations are selected at each
iteration in Line~\ref{alg:propagateallindividualcons:select},
and which of the cases of
Lines~\ref{alg:propagateallindividualcons:inf}--%
\ref{alg:propagateallindividualcons:break} applies.
Later we specify how the selection conditions work algorithmically, and
how $k$-inf-conjunctions can be detected.

In the first iteration, we select $\gamma_1$, set $i_{\gamma_1} \gets 2$.
There is a $2$-inf-conjunction $\{ (1, 0) \}$ for $\gamma_1$, since
choosing $x_1 \gets 0$ yields $x \precp{2} \gamma_1(x)$.
Hence, we apply fixing $(1, 1)$, which fixes entry $1$ to value 1.
Any remaining $i_\delta$-inf-conjunction for $\delta \in \Pi$ needs at
least two fixings, so we continue with the next iteration.
Again, select $\gamma_1$ and set $i_{\gamma_1} \gets 3$.
Since $x_2, x_7$ are both unfixed, no $3$-inf-conjunction of cardinality
less than two exists.
Set~$i_{\gamma_1} \gets 4$, and we encounter a fixed point $3$.
Set~$i_{\gamma_1} \gets 5$, we have~$(x_4, x_8) = (0, \blank)$.
If the value of~$x_2$ and~$x_7$ is the same, then $x_8$ must become 0, as
well.
Set $i_{\gamma_1} \gets 6$, we encounter~$(x_5, x_2) = (1, \blank)$.
In this case, if the columns $x$ and $\gamma_1(x)$ are equal up to entry
$5$, and $x_2 = 0$,
then no $6$-inf-conjunction for $\gamma_1$ with cardinality 1 can be found.
Otherwise, if $x_2 = 1$, we can continue.
Choose $i_{\gamma_1} \gets 7$. Then, $(x_6, x_1) = (0, 1)$,
which means that $x \precp{7} \gamma_1(x)$ if for all entries $i < 6$ we
have that the value of $x_i$ is the same as~$\gamma_1(x)_i$.
If~$x_7 = 1$, to ensure~$x \succeqp{6} \gamma_1(x)$,
we must have $x_2 = 1$ and $x_8 = 0$,
but in that case~$x \precp{7} \gamma(x)$, so~$\{(7, 1)\}$ is a
7-inf-conjunction for~$\gamma_1$.
Hence, apply fixing $(7, 0)$.

Similar steps can be applied to permutation $\gamma_2$,
but no further fixings can be deduced. Namely, if $x_2 \gets 0$ then $x_3
\gets 1$ and we find $x \succp{6} \gamma_2(x)$, and if $x_2 \gets 1$ then
$x \succp{3} \gamma_2(x)$.

\begin{figure}[!tbp]
\begin{equation*}
\makeatletter\setlength\BA@colsep{3.2pt}\makeatother
\begin{blockarray}{*{3}{cc}}
  \begin{block}{*{3}{cc}}
    \BAmulticolumn{2}{>{$\footnotesize}c<{$}}{$x$} &
    \BAmulticolumn{2}{>{$\footnotesize}c<{$}}{$\gamma_1(x)$} &
    \BAmulticolumn{2}{>{$\footnotesize}c<{$}}{$\gamma_2(x)$} \\
  \end{block}
  \begin{block}{[*{3}{>{$\tiny}l<{\hspace{-1em}$}r}]}
    (1) & \blank & (5) &      1 & (5) &      1 \\
    (2) & \blank & (7) & \blank & (6) &      0 \\
    (3) & \blank & (3) & \blank & (1) & \blank \\
    (4) &      0 & (8) & \blank & (2) & \blank \\
    (5) &      1 & (2) & \blank & (4) &      0 \\
    (6) &      0 & (1) & \blank & (3) & \blank \\
    (7) & \blank & (4) &      0 & (7) & \blank \\
    (8) & \blank & (6) &      0 & (8) & \blank \\
  \end{block}
\end{blockarray}
\overset{
\begin{subarray}{c}
x_1 \gets 1
\end{subarray}}{\leadsto}
\begin{blockarray}{*{3}{cc}}
  \begin{block}{*{3}{cc}}
    \BAmulticolumn{2}{>{$\footnotesize}c<{$}}{$x$} &
    \BAmulticolumn{2}{>{$\footnotesize}c<{$}}{$\gamma_1(x)$} &
    \BAmulticolumn{2}{>{$\footnotesize}c<{$}}{$\gamma_2(x)$} \\
  \end{block}
  \begin{block}{[*{3}{>{$\tiny}l<{\hspace{-1em}$}r}]}
    (1) &      1 & (5) &      1 & (5) &      1 \\
    (2) & \blank & (7) & \blank & (6) &      0 \\
    (3) & \blank & (3) & \blank & (1) &      1 \\
    (4) &      0 & (8) & \blank & (2) & \blank \\
    (5) &      1 & (2) & \blank & (4) &      0 \\
    (6) &      0 & (1) &      1 & (3) & \blank \\
    (7) & \blank & (4) &      0 & (7) & \blank \\
    (8) & \blank & (6) &      0 & (8) & \blank \\
  \end{block}
\end{blockarray}
\overset{
\begin{subarray}{c}
x_7 \gets 0
\end{subarray}}{\leadsto}
\begin{blockarray}{*{3}{cc}}
  \begin{block}{*{3}{cc}}
    \BAmulticolumn{2}{>{$\footnotesize}c<{$}}{$x$} &
    \BAmulticolumn{2}{>{$\footnotesize}c<{$}}{$\gamma_1(x)$} &
    \BAmulticolumn{2}{>{$\footnotesize}c<{$}}{$\gamma_2(x)$} \\
  \end{block}
  \begin{block}{[*{3}{>{$\tiny}l<{\hspace{-1em}$}r}]}
    (1) &      1 & (5) &      1 & (5) &      1 \\
    (2) & \blank & (7) &      0 & (6) &      0 \\
    (3) & \blank & (3) & \blank & (1) &      1 \\
    (4) &      0 & (8) & \blank & (2) & \blank \\
    (5) &      1 & (2) & \blank & (4) &      0 \\
    (6) &      0 & (1) &      1 & (3) & \blank \\
    (7) &      0 & (4) &      0 & (7) &      0 \\
    (8) & \blank & (6) &      0 & (8) & \blank \\
  \end{block}
\end{blockarray}
\end{equation*}
\caption{Figure for Example~\ref{ex:algexample}.
Each matrix shows the state of the algorithm at a point.
The columns of the matrix correspond to the vectors $x$, and the permuted
vectors $\gamma_1(x)$ and $\gamma_2(x)$, and for each entry in the matrix
the variable index is written left to it. The left matrix is the initial
state. The second matrix is after application of $x_1 \gets 1$, and the
third matrix is after application of $x_7 \gets 0$.}
\label{fig:ex:algexample}
\end{figure}%
\end{example}

We next show that Algorithm~\ref{alg:propagateallindividualcons} indeed
finds the complete set of fixings.

\begin{lemma}
If Algorithm~\ref{alg:propagateallindividualcons} terminates,
then it correctly detects infeasibility,
or finds the complete set of fixings for each permutation in $\Pi$.
\end{lemma}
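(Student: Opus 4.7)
The plan is to prove the statement by maintaining, by induction on the timestamp~$t$, the invariant that every applied fixing is valid, i.e.,
\begin{equation*}
\xymretope{\Pi} \cap F(I_0, I_1) = \xymretope{\Pi} \cap F(I_0^t, I_1^t).
\end{equation*}
Once this invariant is in place, I would analyze the algorithm's two possible exit points separately.

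For the invariant itself, the base case $t = 0$ is immediate. The only step that modifies~$(I_0^t, I_1^t)$ is line~\ref{alg:propagateallindividualcons:varfixing}, which applies a fixing $(i, 1-b)$ precisely because some $\delta \in \Pi$ admits the singleton~$\{(i,b)\}$ as an $i_\delta$-inf-conjunction. By definition, every $x \in F(I_0^t, I_1^t)$ with $x_i = b$ satisfies $x \precp{i_\delta} \delta(x)$, and thus $x \prec \delta(x)$, so $x \notin \xymresack{\delta} \supseteq \xymretope{\Pi}$. Hence every $x \in \xymretope{\Pi} \cap F(I_0^t, I_1^t)$ must have $x_i = 1-b$, which preserves the invariant.

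If the algorithm returns \Infeasible in line~\ref{alg:propagateallindividualcons:inf}, some $\delta \in \Pi$ admits the empty set as an $i_\delta$-inf-conjunction, meaning every $x \in F(I_0^t, I_1^t)$ satisfies $x \precp{i_\delta} \delta(x)$ and therefore $x \prec \delta(x)$. Consequently $\xymresack{\delta} \cap F(I_0^t, I_1^t) = \emptyset$, and combining with the invariant yields $\xymretope{\Pi} \cap F(I_0, I_1) \subseteq \xymresack{\delta} \cap F(I_0, I_1) = \emptyset$, so infeasibility is correctly reported.

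If instead the algorithm exits the outer \textbf{while} loop and returns \Feasible, the loop condition guarantees that every $\gamma \in \Pi$ satisfies the sufficient conditions for completeness. I would then invoke the property that these conditions imply no further singleton inf-conjunction exists for any $\gamma \in \Pi$ at state $(I_0^t, I_1^t)$, so that by Observation~\ref{obs:fixing} the pair $(I_0^t, I_1^t)$ is complete for each permutation in~$\Pi$. The main obstacle is that this last step relies on two ingredients that are only formalized later in the section, namely the precise form of the sufficient conditions for completeness and the explicit detection procedure for $k$-inf-conjunctions. The proof must therefore be structured so that its correctness becomes verifiable once those ingredients are instantiated, and the bulk of the remaining technical work is really hidden in justifying that the chosen sufficient conditions are in fact sufficient.
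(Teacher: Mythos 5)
Your proposal is correct and takes essentially the same route as the paper's own proof: validity of every applied fixing via the definition of an $i_\delta$-inf-conjunction (the paper phrases this as $\xymretope{\delta}^{(i_\delta)} \supseteq \xymretope{\delta}$ rather than as an explicit induction on $t$, but it is the same argument), the empty-inf-conjunction argument for the \Infeasible exit, and treating the loop's ``sufficient conditions for completeness'' as a black box whose justification is deferred, exactly as the paper defers it to Proposition~\ref{lem:terminationcondition}. Two minor points: in your final chain the last set should be $\xymresack{\delta} \cap F(I_0^t, I_1^t)$ rather than $\xymresack{\delta} \cap F(I_0, I_1)$ (the latter need not be empty; the desired conclusion $\xymretope{\Pi} \cap F(I_0, I_1) = \emptyset$ follows from your invariant together with emptiness at time $t$), and the trivial \Infeasible return in the algorithm's first line, when $F(I_0, I_1) = \emptyset$, should also be covered.
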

\begin{proof}
First of all, we show that the algorithm never adds incorrect fixings.
If the algorithm applies a fixing $(i, b)$, then this fixing is due to
Line~\ref{alg:propagateallindividualcons:varfixing},
so there is a $\delta \in \Pi$
such that~$\{ (i, 1-b) \}$ is an $i_\delta$-inf-conjunction.
Hence, all~$x \in \xymretope\delta^{(i_\delta)} \cap F(I_0^t, I_1^t)
\supseteq \xymretope\delta \cap F(I_0^t, I_1^t)$
have~$x_i = b$, showing that~$(i, b)$ is a valid fixing.

If feasibility is returned at time~$t$,
then the set of fixings $(I_0^t, I_1^t)$
satisfies sufficient conditions for completeness for all
permutations~$\gamma \in \Pi$.
Together with the first paragraph of this proof,
$(I_0^t, I_1^t)$ defines a complete set of fixings for all permutations
$\gamma \in \Pi$, and $
\xymretope\Pi \cap F(I_0, I_1) =
\xymretope\Pi \cap F(I_0^t, I_1^t)
$.

Otherwise infeasibility is returned.
If this is due to the first line, then there does not exist any vector
satisfying the initial sets of fixings $I_0, I_1$, so no valid solution
exists for any $\gamma \in \Pi$.
Otherwise it is due to Line~\ref{alg:propagateallindividualcons:inf}
at time~$t$.
Then the empty set is a valid $i_\delta$-inf-conjunction
for some $\delta \in \Pi$.
Therefore, the empty set is an inf-conjunction,
and thus infeasibility is found without applying any fixing.
Hence, infeasibility is correctly returned.
\end{proof}

We now proceed with the missing detail to turn
Algorithm~\ref{alg:propagateallindividualcons} into a practically
applicable algorithm.
A crucial step of Algorithm~\ref{alg:propagateallindividualcons} is to
identify $k$-inf-conjunctions. To this end,
recall that~$x \prec \perm(x)$ if and only if there exists~$j \in [n]$ such
that~$x_i = \perm(x)_i$ for all~$i \in {[j-1]}$, and~$x_j = 0$ as well
as~$\perm(x)_j = 1$.
To generate $k$-inf-conjunctions, we thus also introduce conjunctions
ensuring equality up to a certain index.
A \emph{$k$-eq-conjunction}
is a conjunction $C \subseteq [n] \times \binary$
such that, for all $x \in F(I_0, I_1)$
with $x_i = b$ for $(i, b) \in C$,
we have ${x \preceqp{k} \gamma(x)}$,
and $x \eqp{k} \gamma(x)$ holds at least for one such vector.
We call it a $k$-eq-conjunction, because all vectors $x \in F(I_0, I_1)$
satisfying the conjunction~$C$ in~$\xymretope\gamma$
(i.e.,~$x \in \xymretope\gamma \cap F(I_0, I_1) \cap V(C)$)
satisfy~$x \eqp{k} \gamma(x)$.
We use such $k$-eq-conjunctions of a permutation to
determine~$(k+1)$-inf-conjunctions,
by exploiting that~$x \preceqp{k} \gamma(x)$
for this conjunction, and additionally ensuring~$x_k < \gamma(x)_k$.
This way, all vectors $x \in F(I_0, I_1)$ respecting the new conjunction
either have that $x \precp{k} \gamma(x)$,
or that~$x \eqp{k} \gamma(x)$ and $x_k < \gamma(x)_k$.
In either case~$x \precp{k + 1} \gamma(x)$ holds,
so the new conjunction is a $(k+1)$-inf-conjunction.
For each permutation, denote the set of
inclusionwise minimal $i_\gamma$-inf-conjunctions
at time~$t$
by~$\infconj{\gamma}{t}$,
and,
likewise,
the set of inclusionwise minimal $i_\gamma$-eq-conjunctions
by~$\eqconj{\gamma}{t}$.
Note that considering minimal conjunctions is sufficient to guarantee
correctness of Algorithm~\ref{alg:propagateallindividualcons}.

To efficiently store and update all information needed for
Algorithm~\ref{alg:propagateallindividualcons},
we encode $\infconj{\gamma}{t}$ and $\eqconj{\gamma}{t}$
by a tree structure.
For each permutation $\gamma \in \Pi$ and time~$t$,
we define an \emph{implication tree},
which is a directed rooted tree~%
$\tree_\gamma^t = (V_\gamma^t, A_\gamma^t)$
with four types of vertices that partition $V_\gamma^t$:
$V_{\mathrm{root}, \gamma}^t \define \{ r_\gamma \}$,
$V_{\mathrm{cond}, \gamma}^t$,
$V_{\mathrm{necc}, \gamma}^t$, and~%
$V_{\mathrm{loose}, \gamma}^t$
for
the set with the root vertex $r_\gamma$,
the \emph{conditional fixing vertices},
the \emph{necessary fixing vertices},
and the \emph{loose end vertices},
respectively.
Each \emph{fixing vertex}~%
$v \in V_{\mathrm{cond}, \gamma}^t \cup V_{\mathrm{necc}, \gamma}^t$
has an associated fixing, denoted by $f_v \in [n] \times \binary$.
We call this structure an implication tree,
because it encodes conjunctions by implications of the type ``if a
set of fixings is applied, then also another fixing must be applied''.
When walking along a directed rooted path,
the vertices in this path describe (dependent) implications as follows:
If we encounter a necessary fixing vertex, then the
associated fixing must be applied. If we
encounter a conditional fixing vertex, then we continue following the path
only if that fixing has already been applied.
Added to this, if we encounter a loose end vertex on our walk, then all
previously applied fixings ensure that~$x \eqp{i_\gamma} \gamma(x)$
is found for all solution vectors~$x$ that respect the given fixings, and
the fixings on the loose end vertex.
When illustrating (parts of) implication trees,
we draw conditional fixing vertices as diamonds, necessary fixing
vertices as circles, loose end vertices as squares,
and no outline is used in case of the root vertex or if the type is
not important for the illustration.
For fixing vertices, its fixings are written next to the vertex.

In Example~\ref{ex:algexample}, after applying the fixing $x_1 \gets 1$,
we have the following implications for $\gamma_1$:
If we fix entry~$2$ to zero, then entries~$7$ and~$8$ must be fixed to
zero, as well. Likewise, if we fix entry~$7$ to one,
then entry~$2$ must be one,
and entry~$8$ must be zero,
and we find~$x \eqp{6} \gamma(x)$ for all solution vectors respecting
these fixings.
The associated implication tree is shown in
Figure~\ref{fig:ex:exampletree}.

\begin{figure}[!tbp]
\centering
\begin{tikzpicture}[x=12mm, y=3mm, font=\footnotesize]
\node[] (root) at (0, 0) {$r_{\gamma_1}$};

\node[cond, label=above:{$(2, 0)$}] (20) at (1, 1) {};
\node[necc, label=above:{$(7, 0)$}] (70) at (2, 1) {};
\node[necc, label=above:{$(8, 0)$}] (80) at (3, 1) {};

\node[cond, label=below:{$(7, 1)$}] (71) at (1, -1) {};
\node[necc, label=below:{$(2, 1)$}] (21) at (2, -1) {};
\node[necc, label=below:{$(8, 0)$}] (80p) at (3, -1) {};
\node[loos] (loos) at (4, -1) {};

\draw[edge] (root) to (20);
\draw[edge] (20) to (70);
\draw[edge] (70) to (80);

\draw[edge] (root) to (71);
\draw[edge] (71) to (21);
\draw[edge] (21) to (80p);
\draw[edge] (80p) to (loos);
\end{tikzpicture}
\caption{Implication tree of permutation $\gamma_1$ in
Example~\ref{ex:algexample} with $i_{\gamma_1} = 6$ and $x_1 = 1$.}
\label{fig:ex:exampletree}
\end{figure}
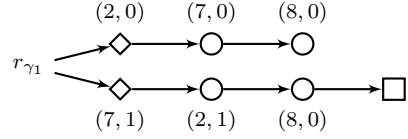

Note that symmetry-based implications can always be encoded by \emph{some}
implication tree.
In the following, however, we will see that, if using the right encoding
and update strategies, the implication trees have a \emph{particular}
structure.
This structure will allow us to implement
Algorithm~\ref{alg:propagateallindividualcons} efficiently.
We first list properties on the relation between our
particular implication trees and the sets~$\infconj{\gamma}{t}$
and~$\eqconj{\gamma}{t}$.
Moreover, we give some properties of the implication trees that are
maintained by the algorithm.
We prove that the relations and properties are maintained using induction.
We also show how Algorithm~\ref{alg:propagateallindividualcons} can
be executed, and how the data structures are updated consistently such
that the mentioned properties indeed hold.
Last, the running time of these methods is analyzed.

\begin{property}[Relation between~$\tree_\gamma^t$ and
~$\eqconj{\gamma}{t}$]
\label{prop:eqconj}
Each eq-conjunction $C \in \eqconj{\gamma}{t}$
corresponds to a loose end vertex $v \in V_{\mathrm{loose}, \gamma}^t$ and
vice versa.
Given a loose end vertex $v \in V_{\mathrm{loose}, \gamma}^t$,
the fixings of its conditional ancestors
correspond to a minimal $i_\gamma^t$-eq-conjunction.
Symbolically,
let $\ancestors(v)$
yield all (improper) ancestors of a vertex~$v$
(i.e., including $v$ itself).
For $v \in V_{\mathrm{loose}, \gamma}^t$ then
$\{ f_u : u \in \ancestors(v) \cap V_{\mathrm{cond}, \gamma}^t \} \in
\eqconj{\gamma}{t}$.
This is a bijection, so any $C \in \eqconj{\gamma}{t}$
has a corresponding loose end vertex yielding this eq-conjunction.
\end{property}

We say that a conjunction~$C \subseteq [n] \times \binary$
is \emph{incompatible} with a set of fixings~$(I_0, I_1)$
if no vector exists that satisfies the fixings of~$(I_0, I_1)$ and~$C$,
i.e., $F(I_0, I_1) \cap V(C) = \emptyset$.
We distinguish two types of inf-conjunctions: conjunctions
$C \in \infconj{\gamma}{t}$
that are incompatible with the fixings~$(I_0^t, I_1^t)$
and those that are not.
\begin{property}[Relation between~$\tree_\gamma^t$ and
~$\infconj{\gamma}{t}$]
\label{prop:infconj}
For $F(I_0^t, I_1^t) \neq \emptyset$,
each~$C \in \infconj{\gamma}{t}$ that is not incompatible with the fixings
corresponds to a necessary fixing vertex~%
$v \in V_{\mathrm{necc}, \gamma}^t$
and vice versa.
For~$v \in V_{\mathrm{necc}, \gamma}^t$,
$\{ f_u : u \in \ancestors(v) \cap V_{\mathrm{cond}, \gamma}^{t} \}
\cup \{ \bar f_v \} \in \infconj{\gamma}{t}$.
In other words,
infeasibility follows if the converse of~$f_v$ and all fixings associated
with the conditional ancestors of $v$ are applied.
Regarding the minimal incompatible inf-conjunctions in~$D_\gamma^t$,
these are single conjunctions for each~$i \in [n]$:~%
$\{ (i, 0), (i, 1) \}$ if $i \notin I_0^t \cup I_1^t$,
or~$\{ (i, 0) \}$ if~$i \in I_1^t$,
or~$\{ (i, 1) \}$ if $i \in I_0^t$.
For the special case where~$F(I_0^t, I_1^t) = \emptyset$
(i.e.,~$D_\gamma^t = \{ \emptyset \}$), the tree is marked infeasible.
\end{property}

To show that these properties of implication trees hold, we introduce some
notation.
Let
$
C_v
\define
\{
f_u :
u \in \ancestors(v)
\cap
(
V_{\mathrm{necc}, \gamma}^t
\cup V_{\mathrm{cond}, \gamma}^t
)
\}
$
be the conjunction of all fixings found on the path from the
root~$r_\gamma$ to~$v$.
Also, for fixing $f = (i, b)$,
let $\ent(f) = i$ be the \emph{entry} of fixing $f$,
and for a set of fixings~$C \subseteq [n] \times \binary$,
let~$\ent(C) \define \{ \ent(f) : f \in C \}$.
Using this notation, we show four auxiliary properties that we need to
show the former properties.

\begin{property}
\label{prop:looseend}
Loose end vertices will occur only as leaves of the tree.
\end{property}

\begin{property}
\label{prop:entries}
If a loose end vertex $v \in V_{\mathrm{loose}, \gamma}^t$ exists,
$\ent(C_u) \subseteq \ent(C_v)$ for all $u \in V_{\gamma}^t$.
Also, any loose end vertex $v \in V_{\mathrm{loose}, \gamma}^t$ satisfies
$\ent(C_v) = \bigcup_{i \in [i_\gamma^t - 1]} \{ i, \gamma^{-1}(i) \}
\setminus (I_0^t \cup I_1^t)$.
\end{property}

\begin{property}
\label{prop:implicationtreeoutdegreetwo}
Any vertex $\hat v$
in $\tree_\gamma^t$ with outdegree larger than one
has outdegree two, and its children $\hat u_1, \hat u_2$
are conditional vertices.
In turn, $\hat u_1$ and $\hat u_2$ have outdegree one,
and their children (resp.~$\hat w_1$ and~$\hat w_2$)
are necessary fixing vertices
with $f_{\hat u_1} = \bar f_{\hat w_2}$
and $f_{\hat u_2} = \bar f_{\hat w_1}$,
where fixings $f_{\hat u_1}$ and $f_{\hat u_2}$ are on different entries.
This is depicted in Figure~\ref{fig:prop:implicationtreeoutdegreetwo}.
\end{property}

\begin{property}
\label{prop:rootedpathnotwoentries}
In any rooted path $P$ in the implication tree $\tree_\gamma^t$,
the fixings of all fixing vertices are on different entries,
and no entry is in $I_0^t$ or $I_1^t$.
\end{property}

These properties ensure that if a conjunction (either an
$k$-inf-conjunction or $k$-eq-conjunction) is encoded by an implication
tree, then no inclusionwise smaller or larger conjunction of the same type
is represented by the same implication tree.

\begin{lemma}
\label{lem:noinclusionwisecontainmentsinf}
Suppose that
Property~\ref{prop:implicationtreeoutdegreetwo}
and~\ref{prop:rootedpathnotwoentries}
hold for~$\gamma \in \Pi$ at time~$t$.
For all distinct~$v, v' \in V_{\mathrm{necc}, \gamma}^{t}$,
\[
\{ f_u : u \in \ancestors(v) \cap V_{\mathrm{cond}, \gamma}^{t} \}
\cup \{ \bar f_{v} \}
\not\subsetneq
\{ f_u : u \in \ancestors(v') \cap V_{\mathrm{cond}, \gamma}^{t} \}
\cup \{ \bar f_{v'} \}.
\]
In other words, the conjunctions implied by $v$ and $v'$ are inclusionwise
not contained in each other.
\end{lemma}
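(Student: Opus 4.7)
The plan is a proof by contradiction: assume that
\begin{equation*}
\{ f_u : u \in \ancestors(v) \cap V_{\mathrm{cond}, \gamma}^{t} \} \cup \{\bar f_v\}
\subsetneq
\{ f_u : u \in \ancestors(v') \cap V_{\mathrm{cond}, \gamma}^{t} \} \cup \{\bar f_{v'}\},
\end{equation*}
and derive a contradiction from Properties~\ref{prop:implicationtreeoutdegreetwo} and~\ref{prop:rootedpathnotwoentries}. The central observation I would use is that, by Property~\ref{prop:rootedpathnotwoentries}, fixings along any rooted path of $\tree_\gamma^t$ carry pairwise distinct entries; hence the elements on either side of the inclusion are indexed uniquely by $\ent(\cdot)$, and checking whether a specific fixing~$f$ lies in such a set reduces to asking whether some vertex on the relevant rooted path carries the entry $\ent(f)$.

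I would then split according to the relative position of $v$ and~$v'$ in~$\tree_\gamma^t$. The comparable cases (one is a proper ancestor of the other) are straightforward: if $v \in \ancestors(v')$, Property~\ref{prop:rootedpathnotwoentries} on the rooted path to~$v'$ says that $\ent(f_v)$ is carried only by~$v$, so $\bar f_v$ cannot match any $f_u$ for a conditional ancestor~$u$ of~$v'$ nor $\bar f_{v'}$, contradicting the inclusion. The symmetric sub-case applies the same entry-distinctness argument both to $\bar f_v$ and to any extra conditional fixing in $\ancestors(v) \setminus \ancestors(v')$.

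The main obstacle is the incomparable case. Let $a$ be the lowest common ancestor of $v$ and $v'$ in $\tree_\gamma^t$; then $a$ has outdegree at least two, so Property~\ref{prop:implicationtreeoutdegreetwo} pins down its local structure: two conditional children $\hat u_1, \hat u_2$ with unique necessary children $\hat w_1, \hat w_2$ satisfying $f_{\hat u_1} = \bar f_{\hat w_2}$ and $f_{\hat u_2} = \bar f_{\hat w_1}$, with $\ent(f_{\hat u_1}) \ne \ent(f_{\hat u_2})$. Place $v$ in the subtree of~$\hat u_1$ and~$v'$ in that of~$\hat u_2$. If $v' \ne \hat w_2$, then $v'$ is a strict descendant of~$\hat w_2$ (since $\hat u_2$ has $\hat w_2$ as its only child and $v'$ is necessary, not conditional), so the rooted path to~$v'$ traverses~$\hat w_2$, whose entry equals $\ent(f_{\hat u_1})$; Property~\ref{prop:rootedpathnotwoentries} then prevents any conditional ancestor of~$v'$ or~$v'$ itself from carrying the entry $\ent(f_{\hat u_1})$, so $f_{\hat u_1}$ lies in the LHS but not in the RHS, contradiction.

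The remaining configuration, $v' = \hat w_2$, gives $\bar f_{v'} = f_{\hat u_1}$ by Property~\ref{prop:implicationtreeoutdegreetwo}, so the RHS equals $\{f_u : u \in \ancestors(a) \cap V_{\mathrm{cond}, \gamma}^{t}\} \cup \{f_{\hat u_1}, f_{\hat u_2}\}$. If additionally $v = \hat w_1$, a symmetric computation (using $\bar f_v = f_{\hat u_2}$) shows the LHS equals the same set, contradicting strictness. Otherwise $v$ is a strict descendant of $\hat w_1$, and Property~\ref{prop:rootedpathnotwoentries} on the rooted path to~$v$ (which contains $\hat u_1$ with entry $\ent(f_{\hat u_1})$ and $\hat w_1$ with entry $\ent(f_{\hat u_2})$) forces every extra conditional fixing in the LHS and also $\bar f_v$ to carry an entry outside $\{\ent(f_{\hat u_1}), \ent(f_{\hat u_2})\}$ and different from those of conditional ancestors of~$a$; none of these fixings can then lie in the RHS, completing the contradiction. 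The key subtlety throughout is that, by Property~\ref{prop:implicationtreeoutdegreetwo}, the paired conditional and converse necessary fixings on opposite branches at~$a$ agree in entry, which is exactly why the entry-distinctness of Property~\ref{prop:rootedpathnotwoentries} must be applied within a single rooted path rather than across the whole tree.
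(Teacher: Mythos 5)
Your proof is correct and follows essentially the same route as the paper: locate the first common ancestor of $v$ and $v'$, use Property~\ref{prop:implicationtreeoutdegreetwo} to pin down its local structure, and apply the entry-distinctness of Property~\ref{prop:rootedpathnotwoentries} along rooted paths to force $v' = \hat w_2$ and then a contradiction. The only differences are cosmetic: you spell out the comparable (ancestor) cases that the paper dismisses in one line via $u \notin \{v, v'\}$, and in the final case you argue element-wise (exhibiting $\bar f_v \in$ LHS $\setminus$ RHS, or equality of the two sets when $v = \hat w_1$) where the paper concludes with the cardinality comparison $|C'| \leq |C|$.
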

\begin{proof}
Suppose $v, v' \in V_{\mathrm{necc}, \gamma}^{t}$,
respectively representing conjunctions $C, C'$
with $C \subsetneq C'$.
Let $u$ be the first common ancestor of $v$ and $v'$.
By Property~\ref{prop:rootedpathnotwoentries}, $u \notin \{ v, v' \}$,
so by Property~\ref{prop:implicationtreeoutdegreetwo}, $u$ is a vertex of
outdegree 2.
Without loss of generality, identify $u$ with $\hat v$ in the property,
and let $v$ be in the subtree of $\hat u_1$ and $v'$ be in the subtree
of $\hat u_2$. Then $f_{\hat u_1} \in C$, so by
Property~\ref{prop:rootedpathnotwoentries} and $C \subsetneq C'$ we must
have that $v' = \hat w_2$.
But then~$|C'| \leq |C|$, contradicting~$C \subsetneq C'$.
\end{proof}

\begin{lemma}
\label{lem:noinclusionwisecontainmentseq}
Suppose that
Property~\ref{prop:looseend},~%
\ref{prop:implicationtreeoutdegreetwo}
and~\ref{prop:rootedpathnotwoentries} hold for~$\gamma \in \Pi$ at
time~$t$.
For all distinct~$v, v' \in V_{\mathrm{loose}, \gamma}^{t}$,
$
\{ f_u : u \in \ancestors(v) \cap V_{\mathrm{cond}, \gamma}^t \}
\not\subsetneq
\{ f_u : u \in \ancestors(v') \cap V_{\mathrm{cond}, \gamma}^t \}
$.
In other words, the conjunctions implied by $v$ and $v'$ are inclusionwise
not contained in each other.
\end{lemma}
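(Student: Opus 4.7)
The plan is to mirror the proof of Lemma~\ref{lem:noinclusionwisecontainmentsinf} almost verbatim, adapting it for loose end vertices instead of necessary fixing vertices. I would argue by contradiction: assume there exist distinct $v, v' \in V_{\mathrm{loose}, \gamma}^t$ with $C_v^{\mathrm{cond}} \subsetneq C_{v'}^{\mathrm{cond}}$, where $C_w^{\mathrm{cond}} \define \{f_u : u \in \ancestors(w) \cap V_{\mathrm{cond}, \gamma}^t\}$. The goal is to derive a contradiction by locating a specific conditional fixing on the path to $v$ that cannot appear on the path to $v'$.

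First I would invoke Property~\ref{prop:looseend} to conclude that $v$ and $v'$ are both leaves, so neither is a proper ancestor of the other. Hence their first common ancestor $u$ satisfies $u \notin \{v, v'\}$. Since $u$ has outdegree at least two, Property~\ref{prop:implicationtreeoutdegreetwo} applies: $u$ has exactly two children $\hat u_1, \hat u_2$, both conditional, with fixings on distinct entries, and each $\hat u_i$ has a necessary child $\hat w_i$ satisfying $f_{\hat u_1} = \bar f_{\hat w_2}$ and $f_{\hat u_2} = \bar f_{\hat w_1}$. Without loss of generality, $v$ lies in the subtree rooted at $\hat u_1$ and $v'$ in the subtree rooted at $\hat u_2$.

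The core step is then to observe that $f_{\hat u_1} \in C_v^{\mathrm{cond}}$ (since $\hat u_1$ is a conditional ancestor of $v$), so by the assumed inclusion $f_{\hat u_1} \in C_{v'}^{\mathrm{cond}}$, meaning some conditional ancestor $w$ of $v'$ satisfies $f_w = f_{\hat u_1}$. However, the path from the root to $v'$ passes through $\hat u_2$ and then $\hat w_2$, and by Property~\ref{prop:implicationtreeoutdegreetwo} the fixing $f_{\hat w_2} = \bar f_{\hat u_1}$ is on the same entry as $f_{\hat u_1}$. Property~\ref{prop:rootedpathnotwoentries} forbids two fixings on the same entry along a rooted path, so $w$ must equal $\hat w_2$. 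But $\hat w_2$ is a necessary vertex while $w$ is conditional, a contradiction.

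The only subtle point — and the step I would be most careful about — is the initial argument that $u \notin \{v, v'\}$, which is where the proof diverges slightly from that of Lemma~\ref{lem:noinclusionwisecontainmentsinf}: there one invokes Property~\ref{prop:rootedpathnotwoentries} to rule out $u \in \{v, v'\}$, whereas here it follows immediately and more cleanly from Property~\ref{prop:looseend} because loose end vertices are leaves. The rest is then a direct transcription of the earlier argument with $C_v^{\mathrm{cond}}$ in place of $C_v^{\mathrm{cond}} \cup \{\bar f_v\}$, so no new machinery is required.
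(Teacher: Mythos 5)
Your proof is correct and takes essentially the same route as the paper's: both locate the first common ancestor of $v$ and $v'$ (using Property~\ref{prop:looseend} to ensure it is a proper ancestor of outdegree two), invoke Property~\ref{prop:implicationtreeoutdegreetwo} to identify $\hat u_1, \hat u_2, \hat w_2$, and use Property~\ref{prop:rootedpathnotwoentries} together with $f_{\hat w_2} = \bar f_{\hat u_1}$ to conclude that $f_{\hat u_1}$ cannot appear among the conditional fixings on the path to $v'$, contradicting the assumed strict inclusion. The only cosmetic difference is that you phrase the final contradiction via the type mismatch between the hypothetical conditional witness $w$ and the necessary vertex $\hat w_2$, while the paper concludes directly that $f_{\hat u_1} \notin C'$; these are the same argument.
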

\begin{proof}
Suppose $v$ and~$v' \in V_{\mathrm{loose}, \gamma}^t$,
representing conjunctions $C$ and~$C'$, respectively, with~${C \subsetneq C'}$.
Vertices $v$ and $v'$ are in different subtrees,
since they are both leaves due to Property~\ref{prop:looseend}.
Let~$u$ be the first common ancestor of $v$ and $v'$,
so by Property~\ref{prop:implicationtreeoutdegreetwo}, $u$ is a vertex of
outdegree two. Without loss of generality, identify $u$ with $\hat v$ in
the property, and let $v$ be in the subtree of $\hat u_1$ and $v'$ in
the subtree of $\hat u_2$.
Then $f_{\hat u_1} \in C$.
But vertex $\hat w_2$ has fixing~$\bar f_{\hat u_1}$, so
Property~\ref{prop:rootedpathnotwoentries}
yields~$f_{\hat u_1} \notin C'$. This contradicts $C \subsetneq C'$.
\end{proof}

We now proceed by induction to show that the aforementioned
properties hold.
To this end, we show that they hold at initialization,
and that they are maintained during any step of the algorithm.

\paragraph{Initial state}
At initialization (i.e.,~$t=0$),
the index $i_\gamma$ is set to one for all~$\gamma \in \Pi$.
Because $\eqp{1}$ is a tautology, the empty set is the only
minimal $i_\gamma$-eq-conjunction for all~$\gamma \in \Pi$:
$\eqconj{\gamma}{0} = \{ \emptyset \}$.
Similarly,~$\precp{1}$ is a contradictory operator,
so at initialization the only $i_\gamma$-inf-conjunctions
are the inf-conjunctions that are incompatible with $I_0, I_1$.
An implication tree that consists of the root vertex that is the parent of
a single loose end vertex encodes these $1$-eq-conjunctions and
$1$-inf-conjunctions, and respects all aforementioned properties.
This is depicted in Figure~\ref{fig:initialstate}.

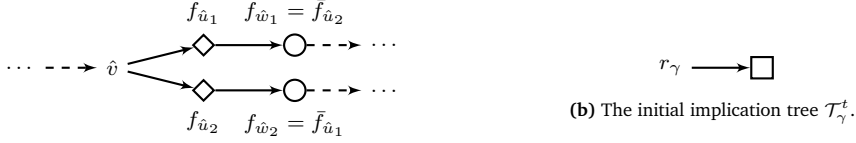
\begin{figure}[!tbp]
\begin{subfigure}{.5\textwidth}
\centering
\begin{tikzpicture}[x=12mm, y=3mm, font=\footnotesize]
\node[] (beforestart) at (-1, 0) {$\dots$};
\node[] (start) at (0, 0) {$\hat v$};
\node[cond, label=above:{$f_{\hat u_1}$}] (a0) at (1, 1) {};
\node[cond, label=below:{$f_{\hat u_2}$}] (b1) at (1, -1) {};
\node[necc, label=above:{$f_{\hat w_1} = \bar f_{\hat u_2}$}]
(b0) at (2, 1) {};
\node[necc, label=below:{$f_{\hat w_2} = \bar f_{\hat u_1}$}]
(a1) at (2, -1) {};
\node[] (l0) at (3, 1) {$\dots$};
\node[] (l1) at (3, -1) {$\dots$};

\draw[edge, dashed] (beforestart) to (start);
\draw[edge] (start) to (a0);
\draw[edge] (a0) to (b0);
\draw[edge, dashed] (b0) to (l0);
\draw[edge] (start) to (b1);
\draw[edge] (b1) to (a1);
\draw[edge, dashed] (a1) to (l1);
\end{tikzpicture}
\caption{Implication tree $\tree_\gamma^t$ around vertex $v$ with
outdegree two.}
\label{fig:prop:implicationtreeoutdegreetwo}
\end{subfigure}%
\begin{subfigure}{.5\textwidth}
\centering
\begin{tikzpicture}[x=12mm, y=3mm, font=\footnotesize]
\node[] (root) at (0, 0) {$r_\gamma$};
\node[loos] (end) at (1, 0) {};

\draw[edge] (root) to (end);
\end{tikzpicture}
\caption{The initial implication tree $\tree_\gamma^t$.}
\label{fig:initialstate}
\end{subfigure}%
\caption{Illustrations for Property~\ref{prop:implicationtreeoutdegreetwo}
and the initial state.}
\end{figure}

\paragraph{Selecting the permutation}
Proposition~\ref{lem:terminationcondition} shows
sufficient conditions for completeness of fixings in~$\xymretope\gamma$
for each $\gamma \in \Pi$, expressed in the state of the algorithm.

\begin{restatable}[Sufficient conditions for %
completeness]{proposition}{terminatecondition}
\label{lem:terminationcondition}
\label{prop:terminationcondition}
Consider~$\gamma \in \Pi$, let~$t$ be some time index,
and~$\emptyset \notin \infconj{\gamma}{t}$.
Suppose that, if~$\{(i,b)\} \in \infconj{\gamma}{t}$,
then $i \in I_{1-b}^t$.
Then, the set of fixings $(I_0^t, I_1^t)$ is complete
for~$x \succeq \gamma(x)$
if
\begin{enumerate*}[label=\textbf{\emph{\small(P\arabic*.)}},
ref=\emph{\small(P\arabic*)}]
\item $\eqconj{\gamma}{t} = \emptyset$, or
\label{prop:terminationcondition:1}
\item $i_\gamma^t > n$, or
\label{prop:terminationcondition:2}
\item all of the following:
\label{prop:terminationcondition:3}
$\emptyset \notin \eqconj{\gamma}{t}$,
and $i_\gamma^t \notin I_0^t$,
and $\gamma^{-1}(i_\gamma^t) \notin I_1^t$,
and $\gamma(i_\gamma^t) > i_\gamma^t$,
and $\gamma^{-1}(i_\gamma^t) > i_\gamma^t$.
\end{enumerate*}
\end{restatable}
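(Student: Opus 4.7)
The plan is to establish, under each of (P1), (P2), (P3), that for every unfixed entry $j$ and each $v \in \binary$ there exists $x \in F(I_0^t, I_1^t)$ with $x_j = v$ and $x \succeq \gamma(x)$; by Observation~\ref{obs:fixing} this is exactly completeness of the fixings for $x \succeq \gamma(x)$. Using the hypotheses that $\emptyset \notin \infconj{\gamma}{t}$ and that every single-fixing $i_\gamma^t$-inf-conjunction has its entry in the opposite fixing set, the singleton $\{(j, v)\}$ is not a minimal $i_\gamma^t$-inf-conjunction for any unfixed $j$ and $v \in \binary$. Hence some $x \in F(I_0^t, I_1^t)$ exists with $x_j = v$ and $x \succeqp{i_\gamma^t} \gamma(x)$. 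The core of the plan is to lift this relation from $\succeqp{i_\gamma^t}$ to the full order $\succeq$.

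Cases (P1) and (P2) are direct. For (P1), I would argue that if some $x^\star \in F(I_0^t, I_1^t)$ satisfied $x^\star \eqp{i_\gamma^t} \gamma(x^\star)$, then the conjunction $\{(k, x^\star_k) : k \in [n]\}$ would be an $i_\gamma^t$-eq-conjunction, and any minimal sub-conjunction thereof would lie in $\eqconj{\gamma}{t}$, contradicting $\eqconj{\gamma}{t} = \emptyset$. Hence no such $x^\star$ exists, so $\succeqp{i_\gamma^t}$ collapses to $\succp{i_\gamma^t}$, which in turn implies $\succ$. For (P2), $\succeqp{i_\gamma^t}$ and $\succeq$ already coincide when $i_\gamma^t > n$, so the precondition-witness is directly in $\xymresack{\gamma}$.

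For (P3), I would first dispose of the subcase $\eqconj{\gamma}{t} = \emptyset$ by falling back on (P1). Otherwise, $\eqconj{\gamma}{t} \neq \emptyset$ provides a vector in $F(I_0^t, I_1^t)$ with $\eqp{i_\gamma^t}$, establishing condition~(2) of the eq-conjunction definition for the empty conjunction; the assumption $\emptyset \notin \eqconj{\gamma}{t}$ then forces condition~(1) to fail, so there is $\tilde{x} \in F(I_0^t, I_1^t)$ with $\tilde{x} \succp{i_\gamma^t} \gamma(\tilde{x})$, say with first strict disagreement at some $i' < i_\gamma^t$. Given an unfixed pair $(j, v)$ and the precondition-witness $x$, if already $x \succp{i_\gamma^t} \gamma(x)$ we are done. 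Otherwise $x \eqp{i_\gamma^t} \gamma(x)$, and for $(j, v) \notin \{(i_\gamma^t, 0), (\gamma^{-1}(i_\gamma^t), 1)\}$ I would modify $x$ by setting $x_{i_\gamma^t} \gets 1$ and $x_{\gamma^{-1}(i_\gamma^t)} \gets 0$. The conditions $\gamma(i_\gamma^t) > i_\gamma^t$ and $\gamma^{-1}(i_\gamma^t) > i_\gamma^t$ place both entries outside $[i_\gamma^t - 1] \cup \gamma^{-1}([i_\gamma^t - 1])$, so the modification preserves $x \eqp{i_\gamma^t} \gamma(x)$; at position $i_\gamma^t$ this yields $x_{i_\gamma^t} = 1 > 0 = x_{\gamma^{-1}(i_\gamma^t)} = \gamma(x)_{i_\gamma^t}$, producing $x \succp{i_\gamma^t + 1} \gamma(x)$ and hence $x \succ \gamma(x)$. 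Consistency with the fixings uses $i_\gamma^t \notin I_0^t$ and $\gamma^{-1}(i_\gamma^t) \notin I_1^t$ from (P3), and the excluded pairs are exactly those where the overwrites would conflict with $x_j = v$.

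The main obstacle is the two exceptional pairs $(j, v) \in \{(i_\gamma^t, 0), (\gamma^{-1}(i_\gamma^t), 1)\}$, for which the modification above would overwrite the required value of $x_j$. Here I would instead start from $\tilde{x}$ and set $\tilde{x}_j \gets v$. Since $\gamma(i_\gamma^t) > i_\gamma^t > i'$ and $\gamma^{-1}(i_\gamma^t) > i_\gamma^t > i'$, the index $j$ lies outside $[i'] \cup \gamma^{-1}([i'])$, so neither $\tilde{x}$ nor $\gamma(\tilde{x})$ changes at any position in $[i']$; in particular the strict disagreement at $i'$ is preserved, so $\tilde{x} \succp{i_\gamma^t} \gamma(\tilde{x})$ still holds and hence $\tilde{x} \succ \gamma(\tilde{x})$. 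Fixing-compatibility of the assignment is immediate from $j$ being unfixed, which closes the remaining cases.
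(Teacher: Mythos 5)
Your proposal is correct, but it runs along a noticeably different track than the paper's proof, so a comparison is worth making. The paper argues by contradiction entirely at the level of conjunctions: a missing valid fixing $f=(i,b)$ on an unfixed entry would make $\xymresack{\gamma}\cap F(I_0^t,I_1^t)\cap V(\bar f)$ empty; under \ref{prop:terminationcondition:3} one then observes that $C=\{\bar f,(i_\gamma^t,1),(\gamma^{-1}(i_\gamma^t),0)\}$ is an $i_\gamma^t$-inf-conjunction and, because the entries $i_\gamma^t$ and $\gamma^{-1}(i_\gamma^t)$ do not occur among the first $i_\gamma^t-1$ coordinates of $x$ or $\gamma(x)$, the two added fixings cannot belong to a minimal $i_\gamma^t$-inf-conjunction, so $\{\bar f\}$ itself must be one, contradicting the hypothesis on singletons. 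You instead argue directly at the level of vectors: the same hypotheses give, for every unfixed $(j,v)$, a witness with $x_j=v$ and $x\succeqp{i_\gamma^t}\gamma(x)$, and you upgrade it to a full lexicographic witness by explicit surgery, setting $x_{i_\gamma^t}\gets 1$ and $x_{\gamma^{-1}(i_\gamma^t)}\gets 0$ — which is the same key insight (these two entries are invisible to the order up to $i_\gamma^t$ and may be fixed thanks to $i_\gamma^t\notin I_0^t$, $\gamma^{-1}(i_\gamma^t)\notin I_1^t$), just implemented constructively rather than via stripping an inf-conjunction. A genuine divergence is your treatment of the two exceptional pairs $(i_\gamma^t,0)$ and $(\gamma^{-1}(i_\gamma^t),1)$, where the surgery would clash with the required value: you resolve these by invoking the condition $\emptyset\notin\eqconj{\gamma}{t}$ to obtain a strict witness $\tilde x\succp{i_\gamma^t}\gamma(\tilde x)$ and flipping the (unfixed) entry $j$ there, which leaves the first disagreement untouched. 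This makes explicit where the first bullet of \ref{prop:terminationcondition:3} enters, whereas the paper's conjunction-level argument absorbs those cases uniformly (its written proof never invokes $\emptyset\notin\eqconj{\gamma}{t}$); your route is more hands-on and arguably more transparent about which hypothesis does what, at the cost of an extra case split, while the paper's route stays closer to the conjunction machinery used throughout Section~\ref{sec:completepermset}.
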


The proof of this proposition, and of the other propositions in this
section, are given in Appendix~\ref{sec:proofs}.
These conditions can be used in
Line~\ref{alg:propagateallindividualcons:select}
of the algorithm, because the conditions of the second sentence are
satisfied at initialization, and at the end of an iteration due to
the loop of Line~\ref{alg:propagateallindividualcons:whilefix}.
Due to the induction hypothesis, Properties~\ref{prop:eqconj},
\ref{prop:infconj} and~\ref{prop:looseend} are satisfied,
so the following corollary shows the same conditions in terms of the
implication trees.

\begin{corollary}[Sufficient conditions for completeness]
\label{cor:terminationcondition}
Consider $\gamma \in \Pi$, and let $t$ be some time index.
Suppose that the root of $\tree_\gamma^t$ has no necessary fixing vertex
as child.
The set of fixings $(I_0^t, I_1^t)$ is complete for $\gamma \in \Pi$ if
\begin{enumerate*}[label=\textbf{\emph{\small(C\arabic*.)}},
ref=\emph{\small(C\arabic*)}]
\item
\label{cor:terminationcondition:C1}
There is no loose end vertex in $\tree_\gamma^t$, or
\label{def:sufficientcompletenessconditions:1}
\item
\label{cor:terminationcondition:C2}
$i_\gamma^t > n$, or
\item
\label{cor:terminationcondition:C3}
Every rooted path to a loose end vertex contains a conditional
fixing vertex,
and $i_\gamma^t \notin I_0^t$,
and $\gamma^{-1}(i_\gamma^t) \notin I_1^t$,
and $\gamma(i_\gamma^t) > i_\gamma^t$,
and $\gamma^{-1}(i_\gamma^t) > i_\gamma^t$.
\label{def:sufficientcompletenessconditions:3}
\end{enumerate*}
\end{corollary}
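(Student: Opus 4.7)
The plan is to derive the corollary by translating its tree-language conditions~C1--C3 into the conjunction-language conditions~P1--P3 of Proposition~\ref{lem:terminationcondition}, using Properties~\ref{prop:eqconj} and~\ref{prop:infconj}, which are available by the induction hypothesis at time~$t$. Once the dictionary is in place, the result follows by invoking the proposition.

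First I would verify the standing hypotheses of Proposition~\ref{lem:terminationcondition}. Since the algorithm has not returned infeasibility, $F(I_0^t, I_1^t) \neq \emptyset$, so by the last clause of Property~\ref{prop:infconj} the tree is not marked infeasible, which is equivalent to $\emptyset \notin \infconj{\gamma}{t}$. Moreover, a singleton inf-conjunction $\{(i,b)\}$ that is \emph{not} incompatible with the current fixings corresponds via Property~\ref{prop:infconj} to a necessary fixing vertex~$v$ whose set of conditional ancestors is empty, i.e.\ to a child of the root with $f_v = (i, 1-b)$. The assumption that the root has no necessary fixing vertex as a child therefore forces every singleton inf-conjunction $\{(i,b)\} \in \infconj{\gamma}{t}$ to be incompatible, i.e.\ $i \in I_{1-b}^t$. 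This matches the hypothesis of the proposition.

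Next I would translate the three individual conditions. Condition~C1 says that $\tree_\gamma^t$ has no loose end vertex, which by the bijection in Property~\ref{prop:eqconj} is equivalent to $\eqconj{\gamma}{t} = \emptyset$ and is thus P1. Conditions~C2 and~P2 coincide verbatim. For~C3, the four side conditions on $i_\gamma^t$, $\gamma(i_\gamma^t)$, and $\gamma^{-1}(i_\gamma^t)$ literally match those of~P3, so only the first conjunct requires attention: the statement that every rooted path to a loose end vertex contains a conditional fixing vertex says precisely that no loose end vertex $v$ satisfies $\ancestors(v) \cap V_{\mathrm{cond},\gamma}^t = \emptyset$, and by Property~\ref{prop:eqconj} this is equivalent to $\emptyset \notin \eqconj{\gamma}{t}$. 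Applying Proposition~\ref{lem:terminationcondition} then yields the corollary.

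I do not expect any genuine obstacle; the content is pure bookkeeping on top of the induction hypothesis. The one subtle point deserving care is the verification that ``the root has no necessary fixing vertex as a child'' is indeed the correct tree-language form of ``every singleton inf-conjunction is incompatible with the fixings'', which hinges on reading Property~\ref{prop:infconj} precisely: a necessary fixing vertex adjacent to the root has an empty set of conditional ancestors, and hence its associated inf-conjunction is the singleton $\{\bar f_v\}$, while every other non-incompatible inf-conjunction has cardinality at least two.
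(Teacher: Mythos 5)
Your proposal is correct and follows exactly the paper's route: the paper's proof is a one-line appeal to Proposition~\ref{lem:terminationcondition} together with the encoding of $\infconj{\gamma}{t}$ and $\eqconj{\gamma}{t}$ by~$\tree_\gamma^t$ (Properties~\ref{prop:eqconj}, \ref{prop:infconj}, \ref{prop:looseend}), and you simply spell out that dictionary, including the translation of the root-child assumption into the proposition's hypothesis on singleton inf-conjunctions. No gaps; your version is just a more explicit rendering of the same argument.
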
%
\begin{proof}
Follows from Proposition~\ref{lem:terminationcondition}
and the encoding of $\infconj{\gamma}{t}$ and $\eqconj{\gamma}{t}$
by~$\tree_\gamma^t$.
\end{proof}%

Moreover, when these sufficient conditions are used by the algorithm,
it is guaranteed that the algorithm terminates.
Namely, one of the sufficient conditions is $i_\gamma^t > n$,
so one can increase the index of a single permutation at most~$n$ times.
Also, at most~$n$ fixings are possible, so the inner loop can only be
called a finite number of times, as well.

\paragraph{Index increasing event}
At Line~\ref{alg:propagateallindividualcons:indexincr},
the index $i_\gamma$ of a permutation $\gamma \in \Pi$ is increased by one,
along with the timestamp~$t$,
denoted here by $i_\gamma^{t + 1} = i_\gamma^t + 1$.
This does not affect the fixing sets or
the conjunction sets of other permutations,
so also the encoding implication trees remain the same.
In order for a conjunction $C$ to be a $i_\gamma^{t + 1}$-eq-conjunction,
it specifically has to be a $i_\gamma^{t}$-eq-conjunction,
and therefore the latter conjunctions are the starting point for
determining $i_\gamma^{t + 1}$-eq-conjunctions.
If an inclusionwise minimal $i_\gamma^{t}$-eq-conjunction $C$
is an $i_\gamma^{t + 1}$-inf-conjunction, then no $i_\gamma^{t +
1}$-eq-conjunctions are derived from~$C$.
Otherwise, this conjunction is extended (if needed)
with fixings of $i_\gamma^t$ to zero
or $\gamma^{-1}(i_\gamma^t)$ to one,
preventing that $x \succp{i_\gamma^t + 1} \gamma(x)$
for some $x \in F(I_0^t, I_1^t)$ that satisfies the conjunction.
Note that, this way, a single conjunction of the previous timestamp could
induce two other eq-conjunctions.

\begin{restatable}[Updating eq-conjunctions for index increasing event]
{proposition}{indexeqconj}
\label{prop:indexeqconj}
Consider an index increasing event for permutation $\gamma \in \Pi$ at
time~$t$.
Then, for $\delta \in \Pi \setminus \{ \gamma \}$, we have
$\eqconj{\delta}{t + 1} = \eqconj{\delta}{t}$,
and $\eqconj{\gamma}{t + 1} = Y \cup Z$, where
\begin{align*}
Y &=
\left\{%
\vphantom{\begin{aligned}1\\2\\3\end{aligned}}
\right.%
C :
&&
\begin{aligned}
&C \in \eqconj{\gamma}{t},
\text{and}
\\
&\text{for all}\
x \in \xymretope\gamma^{(i_\gamma^t)} \cap F(I_0^t, I_1^t) \cap V(C)\
\text{holds}\
x_{i_\gamma^t} \leq \gamma(x)_{i_\gamma^t},\
\text{and}
\\
&\text{there is}\
x \in \xymretope\gamma^{(i_\gamma^t)} \cap F(I_0^t, I_1^t) \cap V(C)\
\text{with}\
x_{i_\gamma^t} = \gamma(x)_{i_\gamma^t}
\end{aligned}
\left.%
\vphantom{\begin{aligned}1\\2\\3\end{aligned}}
\right\},\
\text{and}
\\
Z &=
\left\{\rule{0cm}{2.5em}\right.%
C \cup S :
&&
\begin{aligned}
&
C \in \eqconj{\gamma}{t},
S \in \{ \{ (i_\gamma^t, 0) \},
\{ (\gamma^{-1}(i_\gamma^t), 1) \} \},\
\\
&
\xymretope\gamma^{(i_\gamma^{t+1})} \cap F(I_0^{t+1}, I_1^{t+1})
\cap V(C \cup S) \neq \emptyset,\
\text{and}
\\
&
\text{there is}\
x \in \xymretope\gamma^{(i_\gamma^t)} \cap F(I_0^t, I_1^t) \cap V(C)\
\text{with}\
x_{i_\gamma^t} > \gamma(x)_{i_\gamma^t}
\end{aligned}
\left.\rule{0cm}{2.5em}\right\}%
.
\end{align*}
\end{restatable}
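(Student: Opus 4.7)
The first claim, $\eqconj{\delta}{t+1} = \eqconj{\delta}{t}$ for $\delta \in \Pi \setminus \{\gamma\}$, is immediate: an index-increasing event for $\gamma$ changes neither $i_\delta$ nor the fixing sets $(I_0^t, I_1^t)$, and the set of inclusionwise minimal $i_\delta$-eq-conjunctions depends only on these. For the main claim, write $k \define i_\gamma^t$ and $(I_0, I_1) \define (I_0^t, I_1^t) = (I_0^{t+1}, I_1^{t+1})$. The engine of the proof is the characterization that a conjunction $C$ is a $(k{+}1)$-eq-conjunction if and only if $C$ is a $k$-eq-conjunction, every $x \in F(I_0, I_1) \cap V(C)$ with $x \eqp{k} \gamma(x)$ satisfies $x_k \leq \gamma(x)_k$, and at least one such $x$ satisfies $x_k = \gamma(x)_k$. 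This is just the decomposition of $\preceqp{k+1}$ and $\eqp{k+1}$ into agreement on $[k{-}1]$ plus the entry at~$k$; note also that the class of $x$ appearing in the last two conditions coincides with $\xymretope{\gamma}^{(k)} \cap F(I_0, I_1) \cap V(C)$ whenever $C$ is a $k$-eq-conjunction.

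For the inclusion $Y \cup Z \subseteq \eqconj{\gamma}{t+1}$, an element $C \in Y$ comes with precisely the three conditions above, so it is a $(k{+}1)$-eq-conjunction; its minimality there is inherited from minimality in $\eqconj{\gamma}{t}$, since any proper subset of $C$ already fails to be a $k$-eq-conjunction and hence a $(k{+}1)$-eq-conjunction. For $C \cup S \in Z$ with $S \in \{\{(k,0)\}, \{(\gamma^{-1}(k),1)\}\}$, the set $C \cup S$ is still a $k$-eq-conjunction (it refines $C$), the fixing $S$ itself forces $x_k \leq \gamma(x)_k$ (either directly via $x_k = 0$, or via $\gamma(x)_k = x_{\gamma^{-1}(k)} = 1$), and the explicit nonemptiness clause $\xymretope{\gamma}^{(k+1)} \cap F \cap V(C \cup S) \neq \emptyset$ supplies a witness with $x \eqp{k+1} \gamma(x)$.

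For the reverse inclusion $\eqconj{\gamma}{t+1} \subseteq Y \cup Z$, let $C$ be a minimal $(k{+}1)$-eq-conjunction and choose any minimal $k$-eq-conjunction $D \subseteq C$. If $D = C$, then reading off the three characterizing conditions of the previous paragraph exhibits $C \in Y$. Otherwise $D \subsetneq C$, and $D$ is not itself a $(k{+}1)$-eq-conjunction; the alternative that no $x \in F \cap V(D)$ satisfies $x \eqp{k+1} \gamma(x)$ would make $D$ a $(k{+}1)$-inf-conjunction, which propagates to every superset and contradicts $C \in \eqconj{\gamma}{t+1}$. Hence there is $x \in F \cap V(D)$ with $x \eqp{k} \gamma(x)$, $x_k = 1$, and $x_{\gamma^{-1}(k)} = 0$; to exclude this witness, $C$ must contain $(k,0)$ or $(\gamma^{-1}(k),1)$, call it $S$. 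One then checks that $D \cup S \subseteq C$ is already a $(k{+}1)$-eq-conjunction exactly as in the preceding paragraph (with the nonemptiness supplied by $C$'s own witness), so minimality of $C$ forces $C = D \cup S \in Z$.

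The main obstacle is verifying minimality of $Z$-elements as $(k{+}1)$-eq-conjunctions. Given $C \cup S \in Z$ and a proper subset $C' \subsetneq C \cup S$ that is a $(k{+}1)$-eq-conjunction, the subcase $S \not\subseteq C'$ reduces to $C' \subseteq C$, whereupon the witness $x \in \xymretope{\gamma}^{(k)} \cap F \cap V(C)$ with $x_k > \gamma(x)_k$ (built into the definition of $Z$) lies in $V(C')$ and forces $x \succp{k+1} \gamma(x)$, a contradiction. The subcase $S \subseteq C'$ is the delicate one: writing $C' = D' \cup S$ with $D' \subsetneq C$, one must argue that $D'$ is itself a $k$-eq-conjunction, contradicting minimality of $C$ in $\eqconj{\gamma}{t}$. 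This requires tracking whether the entries $k$ and $\gamma^{-1}(k)$ fixed by $S$ actually influence the partial lexicographic relation at positions in $[k-1]$ (i.e., whether they lie in $[k-1] \cup \gamma^{-1}([k-1])$), and constitutes the technical heart of the argument that I expect to be the main hurdle to formalize.
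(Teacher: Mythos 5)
Most of your outline tracks the paper's proof: the characterization of $(k{+}1)$-eq-conjunctions (with $k=i_\gamma^t$), both directions for $Y$, and the first subcase of the minimality check for $Z$-elements all match the paper's argument. The genuine gap is in the reverse inclusion, at the clause ``to exclude this witness, $C$ must contain $(k,0)$ or $(\gamma^{-1}(k),1)$.'' This does not follow. Since $C$ is a $(k{+}1)$-eq-conjunction, the bad vector $x$ (with $x\eqp{k}\gamma(x)$, $x_k=1$, $x_{\gamma^{-1}(k)}=0$) indeed cannot lie in $F\cap V(C)$, but that only says $x$ violates \emph{some} fixing of $C\setminus D$, which may sit on any entry whatsoever; different bad vectors may moreover be excluded by different fixings. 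Forcing one of the two specific fixings $(k,0)$, $(\gamma^{-1}(k),1)$ into $C$ is exactly the content of the Claim embedded in the paper's proof, and proving it is the hard part: the paper (its $\bar C$, $C$ being your $C$, $D$) assumes, say, $(k,0)\notin C$ under the hypothesis that all level-$(k{+}1)$ equality witnesses of $C$ take value $0$ at entry $k$, and then distinguishes $\gamma(k)>k$ from $\gamma(k)<k$; in the first case it flips entry $k$ of an equality witness to contradict the eq-property of $C$ at level $k{+}1$, in the second it flips entry $k$ of the bad vector and contradicts the eq-property of $D$ at level $k$, the violation appearing at position $\gamma(k)<k$. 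Without this argument the inclusion $\eqconj{\gamma}{t+1}\subseteq Y\cup Z$ is not established, and your follow-up ``minimality of $C$ forces $C=D\cup S$'' has nothing to rest on.

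A secondary point: you locate the ``technical heart'' in the minimality of $Z$-elements for the subcase $S\subseteq C'$, but in the paper this is a short perturbation argument. Writing $C'=D'\cup S$ with $D'\subsetneq C$, minimality of $C$ in $\eqconj{\gamma}{t}$ gives a vector $x'\in F\cap V(D')$ with $x'\succp{k}\gamma(x')$; setting the entry prescribed by $S$ to its fixed value yields $x$ with $x\succeqp{k}x'\succp{k}\gamma(x')\succeqp{k}\gamma(x)$, because fixing entry $k$ to $0$ leaves the first $k-1$ entries of $x$ untouched and can only make $\gamma(x)$ smaller with respect to $\preceqp{k}$, and symmetrically for $(\gamma^{-1}(k),1)$; one only needs the easy check that $x$ still respects $D'\cup S$ and the initial fixings. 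So that step is routine, whereas the step you compressed into a single clause is where the real work of the proposition lies.
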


Regarding the $i_\gamma^{t + 1}$-inf-conjunctions,
all $i_\gamma^t$-inf-conjunctions are again valid inf\hyph{}conjunctions.
Added to this, new inf-conjunctions have equality up to
$i_\gamma^t$, and $x_{i_\gamma^t} < \gamma(x)_{i_\gamma^t}$.
The latter are constructed from the $i_\gamma^t$-eq-conjunctions,
which are (if needed) extended with fixings of $i_\gamma^t$ to zero and
$\gamma^{-1}(i_\gamma^t)$ to one
to enforce infeasibility at entry $i_\gamma^t$.
Again, in both cases one is interested in inclusionwise minimal
conjunctions, so non-minimal conjunctions are removed.

\begin{restatable}[Updating inf-conjunctions for index increasing event]%
{proposition}{indexinfconj}
\label{prop:indexinfconj}
Consider an index increasing event for permutation $\gamma \in \Pi$ at
time~$t$.
Then, for $\delta \in \Pi \setminus \{ \gamma \}$, we have
$\infconj{\delta}{t + 1} = \infconj{\delta}{t}$,
and
$
\infconj{\gamma}{t + 1} =
\infconj{\gamma, \mathrm{eq}}{t + 1}
\cup
\infconj{\gamma, \mathrm{inf}}{t + 1}
$
with
\[
\infconj{\gamma, \mathrm{eq}}{t + 1}
=
\left\{
C \cup S :
\begin{aligned}
&
C \in \eqconj{\gamma}{t},\
S \subseteq \{ (i_\gamma^t, 0), (\gamma^{-1}(i_\gamma^t), 1) \},\
\gamma(i_\gamma^t) \neq i_\gamma^t,\
\text{and}
\\
&
\text{either}\ (i_\gamma^t, 0) \in S\
\text{or}\
x_{i_\gamma^t} = 0\
\text{for all}\
x \in \xymretope\gamma^{(i_\gamma^t)} \cap F^t \cap V(C),\
\text{and}
\\
&
\text{either}\ (\gamma^{-1}(i_\gamma^t), 1) \in S\
\text{or}\
\gamma(x)_{i_\gamma^t} = 1\
\text{for all}\
x \in \xymretope\gamma^{(i_\gamma^t)} \cap F^t \cap V(C)
\end{aligned}
\right\}%
,
\]
with $F^t \define F(I_0^t, I_1^t)$,
and~%
$
\infconj{\gamma, \mathrm{inf}}{t + 1}
=
\left\{
C \in \infconj{\gamma}{t}
:
\text{for all}\
C' \in \infconj{\gamma, \mathrm{eq}}{t}\
\text{holds}\
C \not\supsetneq C'
\right\}%
$%
.
\end{restatable}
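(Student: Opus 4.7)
The plan is to prove the two identities separately. For $\delta \in \Pi \setminus \{\gamma\}$, the index-increasing event changes only $i_\gamma$ and the timestamp $t$, while $i_\delta$, the fixing sets $(I_0, I_1)$, and $\delta$ itself remain unchanged. Since whether a conjunction is an inclusionwise minimal $i_\delta$-inf-conjunction depends only on these data, the identity $\infconj{\delta}{t+1} = \infconj{\delta}{t}$ is immediate.

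For $\gamma$ itself, the key observation is that $x \precp{k+1} \gamma(x)$ holds if and only if either $x \precp{k} \gamma(x)$, or $x \eqp{k} \gamma(x)$ together with $x_k < \gamma(x)_k$. Applying this with $k = i_\gamma^t$ partitions the $i_\gamma^{t+1}$-inf-conjunctions into two families according to which disjunct is certified. Conjunctions certifying the first disjunct are precisely the $i_\gamma^t$-inf-conjunctions, so they come from $\infconj{\gamma}{t}$. Conjunctions certifying the second disjunct arise by augmenting an $i_\gamma^t$-eq-conjunction $C$ with enough fixings to force both $x_{i_\gamma^t} = 0$ and $x_{\gamma^{-1}(i_\gamma^t)} = 1$; such a combination is only satisfiable when $\gamma(i_\gamma^t) \neq i_\gamma^t$, which matches the side condition on $S$.

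To describe $\infconj{\gamma, \mathrm{eq}}{t+1}$ precisely, I would start from an $i_\gamma^t$-eq-conjunction $C$ and argue that the extension $S$ must consist of exactly those elements of $\{(i_\gamma^t, 0), (\gamma^{-1}(i_\gamma^t), 1)\}$ that are not already forced by $C$ together with the assumption $x \in \xymretope\gamma^{(i_\gamma^t)} \cap F(I_0^t, I_1^t)$. This is exactly what the two ``either/or'' clauses in the proposition express. The set $\infconj{\gamma, \mathrm{inf}}{t+1}$ is then obtained by discarding the old inf-conjunctions that are no longer inclusionwise minimal, i.e., those that are strict supersets of a conjunction in the newly constructed $\infconj{\gamma, \mathrm{eq}}{t+1}$; incomparability among the old inf-conjunctions themselves follows from the minimality of $\infconj{\gamma}{t}$.

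The hard part will be the minimality bookkeeping: showing that $\infconj{\gamma, \mathrm{eq}}{t+1} \cup \infconj{\gamma, \mathrm{inf}}{t+1}$ equals the set of \emph{minimal} $i_\gamma^{t+1}$-inf-conjunctions. This requires verifying that no element of $\infconj{\gamma, \mathrm{eq}}{t+1}$ strictly contains another (using minimality of $\eqconj{\gamma}{t}$ together with the minimality of $S$), that no element of $\infconj{\gamma, \mathrm{inf}}{t+1}$ strictly contains an element of $\infconj{\gamma, \mathrm{eq}}{t+1}$ (built into the definition), and that no element of $\infconj{\gamma, \mathrm{eq}}{t+1}$ strictly contains an element of $\infconj{\gamma, \mathrm{inf}}{t+1}$ (the former certifies equality up to $i_\gamma^t$, while the latter certifies strict decrease at some earlier index, so any such containment would violate minimality of the underlying eq-conjunction). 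I expect Lemma~\ref{lem:noinclusionwisecontainmentsinf} and the tree-structural properties from the previous subsection to supply the clean incomparabilities needed here.
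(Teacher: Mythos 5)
Your overall skeleton matches the paper's proof: the case $\delta \in \Pi \setminus \{\gamma\}$ is indeed immediate, and the paper likewise splits the minimal $i_\gamma^{t+1}$-inf-conjunctions into those that are already $i_\gamma^t$-inf-conjunctions (yielding $\infconj{\gamma,\mathrm{inf}}{t+1}$) and those that are not, the latter being $i_\gamma^t$-eq-conjunctions extended by the fixings in $S$. However, the step you dismiss in one sentence is where essentially all of the work lies, and as written it is a genuine gap: you assert that a minimal $i_\gamma^{t+1}$-inf-conjunction $\bar C$ certifying the second disjunct ``arises by augmenting an $i_\gamma^t$-eq-conjunction with enough fixings,'' but you must prove that $\bar C = C \cup S$ for a \emph{minimal} $C \in \eqconj{\gamma}{t}$ and for \emph{exactly} the $S$ prescribed by the either/or clauses. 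The paper does this by first showing $\bar C$ is an $i_\gamma^t$-eq-conjunction, picking a minimal $C \subseteq \bar C$, and then proving the non-obvious equivalences that $(i_\gamma^t,0) \in \bar C \setminus C$ if and only if some $x \in \xymretope\gamma^{(i_\gamma^t)} \cap F(I_0^t,I_1^t) \cap V(C)$ has $x_{i_\gamma^t} = 1$ (and the analogue for $(\gamma^{-1}(i_\gamma^t),1)$), which forces $S \subseteq \bar C \setminus C$ and then $\bar C = C \cup S$ by minimality; the subcase analysis on whether $i_\gamma^t$ is a fixed point, or $\gamma(i_\gamma^t)$ lies before or after $i_\gamma^t$, cannot be skipped, because a fixing on entry $i_\gamma^t$ or $\gamma^{-1}(i_\gamma^t)$ may already sit inside $C$ or be implied by the eq-constraint rather than appear in $S$.

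Your route for the minimality direction is also problematic. Deriving minimality of every element of the union from pairwise incomparability is a legitimate alternative in principle (given that every minimal conjunction lies in the union), but you propose to obtain the incomparabilities from Lemma~\ref{lem:noinclusionwisecontainmentsinf} and the implication-tree properties. Those statements concern the conjunctions encoded by $\tree_\gamma^t$ and are available only under the tree-property induction, which the paper justifies \emph{using} Propositions~\ref{prop:indexeqconj} and~\ref{prop:indexinfconj}; invoking them here inverts the logical order, and in any case they speak about time~$t$, not about the new families at time~$t+1$. Moreover, the eq-versus-eq and eq-versus-inf incomparabilities do not follow from ``minimality of $\eqconj{\gamma}{t}$ plus minimality of $S$'' alone: minimal $i_\gamma^t$-eq-conjunctions can themselves contain fixings on entries $i_\gamma^t$ or $\gamma^{-1}(i_\gamma^t)$ (these entries can occur among the first $i_\gamma^t - 1$ positions of $\gamma(x)$ resp.\ $x$), so the $S$-part of one element may coincide with the $C$-part of another; and your argument that an eq-derived conjunction cannot strictly contain an old inf-conjunction needs a witness with $x \eqp{i_\gamma^t} \gamma(x)$ inside $V(C \cup S)$, not merely inside $V(C)$. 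Producing such witnesses requires exactly the vector-modification argument the paper carries out (take a vector certifying minimality of $C$, impose the fixings of $S$, and check that the partial lexicographic relations up to $i_\gamma^t$ are preserved), so this part of the proof cannot be outsourced to the cited lemmas.
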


These propositions allow us to efficiently update the implication trees in
case of an index increasing event.
The main idea of the implication tree update stems from the observation
that all newly introduced conjunctions are based on
some~$i_\gamma^t$-eq-conjunction that gets extended with zero, one or two
fixings to find the new conjunctions.
These~$i_\gamma^t$-eq-conjunctions correspond to the loose end vertices
in~$\tree_\gamma^t$, by the induction hypothesis.
We construct~$\tree_\gamma^{t + 1}$ by replacing those loose end vertices
with a subtree that encodes the new $i_\gamma^{t + 1}$-inf-conjunctions
and $i_\gamma^{t + 1}$-eq-conjunctions.
It is also possible that the tree structure of a subtree containing this
loose end vertex changes, to account for the
$i_\gamma^{t}$-inf-conjunctions that are setwise dominated by new
$i_\gamma^{t+1}$-inf-conjunctions.
In the special case that~$i_\gamma^t$ is a fixed point of~$\gamma$,
then these updates correspond
to~$\infconj{\gamma}{t + 1} = \infconj{\gamma}{t}$
and~$\eqconj{\gamma}{t + 1} = \eqconj{\gamma}{t}$.
Hence, the implication tree structure remains unchanged in this case.
In the following,
we therefore assume that $i_\gamma^t$ is no fixed point of~$\gamma$.

Consider a loose end vertex $v_\ell \in V_{\mathrm{loose}, \gamma}^{t}$.
Its associated minimal $i_\gamma^t$-eq-conjunction
corresponds to the union of the fixings of the conditional ancestors
of~$v_\ell$ (Property~\ref{prop:eqconj}).
To ensure that constraint $x \succeqp{i_\gamma^t} \gamma(x)$ is satisfied
if these fixings are applied, also all fixings of
the necessary fixing vertices on the rooted path to $v_\ell$
must be applied. That is, because otherwise infeasibility is yielded by
the~$i_\gamma^t$-inf-conjunctions associated with the necessary fixing
vertices on the rooted path to~$v_{\ell}$ (Property~\ref{prop:infconj}).
These fixings are also sufficient to ensure feasibility.
Namely, consider a necessary fixing vertex~$v_n$
that does not lie on the rooted path to~$v_\ell$,
and let~$v_c$ be the first common ancestor of~$v_n$ and~$v_\ell$.
Also, let $u_{c, n}$ be the last non-common ancestor of~$v_c$,
and~$u_{c, \ell}$ the last non-common ancestor of~$v_\ell$.
Then~$v_c$ corresponds to~$\hat v$ in
Property~\ref{prop:implicationtreeoutdegreetwo},
and without loss of generality~$u_{c, n}$ corresponds to~$\hat u_1$,
and~$u_{c, \ell}$ to~$\hat u_2$.
Since all fixings on the rooted path to $v_\ell$ are applied,
Property~\ref{prop:implicationtreeoutdegreetwo} yields
that~$\bar f_{\hat u_1}$ is applied, since $f_{\hat u_1}$ is in the
$i_\gamma^t$-inf-conjunction associated with~$v_n$.
Hence, no infeasibility will be induced by any necessary fixing vertex
that does not lie on the rooted path to $v_\ell$.

This result allows for phrasing the updates of
Proposition~\ref{prop:indexeqconj} and~\ref{prop:indexinfconj}
in terms of the implication tree.
Recall that
$
C_v
\define
\{
f_u :
u \in \ancestors(v)
\cap
(
V_{\mathrm{necc}, \gamma}^t
\cup V_{\mathrm{cond}, \gamma}^t
)
\}
$
is the conjunction of all fixings found on the path from the
root~$r_\gamma$ to~$v$.
The previous paragraph yields:
\begin{observation}
\label{obs:indexincr}
For $v \in V_{\mathrm{loose}, \gamma}^t$,
with associated $i_\gamma^t$-eq-conjunction~$C \in \eqconj{\gamma}{t}$,
we have
$\xymretope\gamma^{(i_\gamma^t)} \cap F(I_0^t, I_1^t) \cap V(C)
= F(I_0^t, I_1^t) \cap V(C_v)$.
\end{observation}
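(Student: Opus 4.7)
The plan is to establish the two set inclusions separately. I first unpack the relationship between $C$ and $C_v$: by Property~\ref{prop:eqconj}, the eq-conjunction $C$ consists precisely of the fixings on the conditional ancestors of $v$ along the rooted path in $\tree_\gamma^t$, while $C_v$ additionally includes the fixings of the necessary ancestors on that same path. Both inclusions therefore reduce to showing that, under the initial fixings $(I_0^t, I_1^t)$, imposing $x \succeqp{i_\gamma^t} \gamma(x)$ on top of $C$ is equivalent to imposing the extra necessary-vertex fixings that distinguish $C_v$ from $C$.

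For the inclusion $\subseteq$, I would take $x \in \xymretope{\gamma}^{(i_\gamma^t)} \cap F(I_0^t, I_1^t) \cap V(C)$ and verify that, for each necessary ancestor $w$ of $v$, $x$ respects $f_w$. The idea is to invoke Property~\ref{prop:infconj}: the inf-conjunction associated with $w$ consists of the fixings of its conditional ancestors (a subset of those of $v$, hence already satisfied by $x$) together with $\bar f_w$. If $x$ also satisfied $\bar f_w$, the entire inf-conjunction would be active, forcing $x \precp{i_\gamma^t} \gamma(x)$ and contradicting $x \in \xymretope{\gamma}^{(i_\gamma^t)}$. This pins $x$ to $f_w$ and yields $x \in V(C_v)$.

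For the reverse inclusion $\supseteq$, I would take $x \in F(I_0^t, I_1^t) \cap V(C_v)$ and assume, toward a contradiction, that $x \precp{i_\gamma^t} \gamma(x)$. Then $x$ satisfies some minimal compatible $i_\gamma^t$-inf-conjunction, which by Property~\ref{prop:infconj} corresponds to a necessary vertex $w$. If $w$ lies on the rooted path to $v$, the contradiction is immediate, since $f_w \in C_v$ forces $x$ to violate $\bar f_w$. The main obstacle lies in the case where $w$ is off that path; here I would track the first common ancestor $\hat v$ of $w$ and $v$ and apply Property~\ref{prop:implicationtreeoutdegreetwo}, which says $\hat v$ has outdegree two with conditional children $\hat u_1, \hat u_2$ whose unique necessary grandchildren $\hat w_1, \hat w_2$ satisfy $f_{\hat w_2} = \bar f_{\hat u_1}$ and $f_{\hat w_1} = \bar f_{\hat u_2}$. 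Taking (without loss of generality) $\hat u_2$ on the $v$-side, its unique child $\hat w_2$ also lies on the path to $v$, so $f_{\hat w_2} = \bar f_{\hat u_1} \in C_v$; yet $f_{\hat u_1}$ belongs to $w$'s inf-conjunction, contradicting that $x$ satisfies it. Translating this combinatorial outdegree-two structure into an explicit clash between a fixing demanded by $w$'s inf-conjunction and one guaranteed by $C_v$ is the crux of the argument; the remaining pieces are routine once Properties~\ref{prop:eqconj},~\ref{prop:infconj}, and~\ref{prop:implicationtreeoutdegreetwo} are in hand.
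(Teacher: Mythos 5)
Your proposal is correct and follows essentially the same route as the paper: the forward inclusion forces the necessary-vertex fixings on the rooted path via their associated inf-conjunctions (Property~\ref{prop:infconj}), and the reverse inclusion neutralizes any off-path necessary vertex through the outdegree-two structure of Property~\ref{prop:implicationtreeoutdegreetwo}, where $f_{\hat w_2} = \bar f_{\hat u_1} \in C_v$ clashes with the fixing $f_{\hat u_1}$ in that vertex's inf-conjunction. Your phrasing of the sufficiency direction as a contradiction via a minimal compatible inf-conjunction is just a restatement of the paper's argument that no infeasibility is induced by vertices off the path, so the two proofs coincide in substance.
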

Let~%
$h^t\colon [n] \times V_{\mathrm{loose}, \gamma}^t \to \binaryblank$
be the value of the fixing of~$i$ at time~$t$
when respecting the fixings given by~%
$I_0^t, I_1^t$ and by~$C_v$. If entry $i$ is not fixed, this is denoted
by~$\blank$:
\begin{equation}
\label{eq:func_utility_h}
h^t(i, v) \define
\begin{cases}
0, & \text{if}\ i \in I_0^t\ \text{or}\ (i, 0) \in C_v, \\
1, & \text{if}\ i \in I_1^t\ \text{or}\ (i, 1) \in C_v, \\
\blank, & \text{if}\ i \notin I_0^t \cup I_1^t\ \text{and}\ i \notin
\ent(C_v).
\end{cases}
\end{equation}%

Note that is well-defined due to
Property~\ref{prop:rootedpathnotwoentries}.
The implication tree $\tree_\gamma^{t + 1}$
is found by applying an update to $\tree_\gamma^t$ for each loose end
vertex $v \in V_{\mathrm{loose}, \gamma}^t$,
and let $C \in \eqconj{\gamma}{t}$ be the
associated~$i_\gamma^t$-eq-conjunction.
By Property~\ref{prop:looseend},~$v$ is always a leaf of the tree.
For compactness, denote~%
$(i, j) \define (i_\gamma^t, \gamma^{-1}(i_\gamma^t))$,
and let~$(\alpha, \beta) \define (h^t(i, v), h^t(j, v))$.

We discuss the updates for different tuples~$(\alpha, \beta)$ in turn.
To this end, we need to show
Properties~\ref{prop:eqconj}--\ref{prop:rootedpathnotwoentries}.
We start with
Properties~\ref{prop:looseend}--\ref{prop:rootedpathnotwoentries}.
On the one hand, if $(\alpha, \beta) = (0, 1)$,
then Observation~\ref{obs:indexincr} yields
$x_{i_\gamma^t} = 0$ and $\gamma(x)_{i_\gamma^t} = 1$
for all~%
$x \in \xymretope\gamma^{(i_\gamma^t)} \cap F(I_0^t, I_1^t) \cap V(C)$.
The following update is applied.
Let $u$ be the first ancestor of $v$ that is a conditional fixing vertex.
If~$u$ does not exist,
then~$\emptyset \in \infconj{\gamma}{t + 1}$
and we mark the tree as infeasible, so that this situation is handled by
Line~\ref{alg:propagateallindividualcons:inf} next.
Otherwise,
all necessary fixing vertices~$w$ in the subtree of~$u$
with associated $i_\gamma^t$-inf-conjunctions $C'$
have $C' = C \cup \{ \bar f_w \} \supsetneq C$, so these must be removed.
Also loose end vertex $v$ must be removed, since it is no
$i_\gamma^{t+1}$-eq-conjunction because of $(\alpha, \beta)$.
Hence, replace the subtree rooted at $u$ with a single necessary fixing
vertex $u_{\mathrm{new}}$ with $f_{u_{\mathrm{new}}} = \bar f_{u}$.
A \emph{merging step} is applied if $u$ has a sibling $w$.
That is,
the child $x$ of $w$ is removed and $w$ becomes the parent of the children
of $x$, and the parent of $w$ is changed to $v$.
Figure~\ref{fig:mergingstep} depicts the merging step.

These operations remove all vertices from the subtree of $u$.
That is consistent, since $u_{\mathrm{new}}$ has $C$ as
associated~$i_\gamma^{t + 1}$-inf-conjunction,
which dominates the former $i_\gamma^t$-inf-conjunctions associated to the
necessary fixing vertices in the subtree of $u$.
In the merging step, the removal of $x$ is needed since
Property~\ref{prop:implicationtreeoutdegreetwo}
yields~$f_{u_{\mathrm{new}}} = \bar f_u = f_x$,
meaning that the inf-conjunction that $x$ encodes
is~$C \cup \{ f_w \} \supsetneq C$.
Moreover, no $i_\gamma^t$-inf-conjunctions $C'$ with $C' \supsetneq C$ are
missed, since by Property~\ref{prop:implicationtreeoutdegreetwo}
and~\ref{prop:rootedpathnotwoentries} the necessary fixing vertices
associated to $C'$ must be in the rooted path to $u_{\mathrm{new}}$, or in
the subtree of~$w$. %
Last, Properties~\ref{prop:looseend}, \ref{prop:entries},
\ref{prop:implicationtreeoutdegreetwo},
and~\ref{prop:rootedpathnotwoentries}
are maintained, because they are not influenced by a subtree removal or by
the merging step.

\begin{figure}[!tbp]
\centering
\begin{subfigure}{\textwidth/3}
\begin{minipage}[c][15mm][c]{\textwidth}
\centering
\begin{tikzpicture}[x=8mm, y=3mm, font=\footnotesize]
\node[] (start) at (0, 0) {};
\node[cond, label=above:{$w$}] (w) at (1, 1) {};
\node[necc, label=below:{$u_{\mathrm{new}}$}] (u) at (1, -1) {};
\node[necc, label=above:{$x$}] (x) at (2, 1) {};
\node[] (dots) at (3, 1) {$\cdots$};

\draw[edge] (start) to (w);
\draw[edge] (start) to (u);
\draw[edge] (w) to (x);
\draw[edge, dotted] (x) to (dots);
\end{tikzpicture}
\end{minipage}
\caption{Before merging step}
\end{subfigure}%
\begin{subfigure}{\textwidth/3}
\begin{minipage}[c][15mm][c]{\textwidth}
\centering
{changes to}
\end{minipage}
\end{subfigure}%
\begin{subfigure}{\textwidth/3}
\begin{minipage}[c][15mm][c]{\textwidth}
\centering
\begin{tikzpicture}[x=8mm, y=3mm, font=\footnotesize]
\node[] (start) at (0, 0) {};
\node[cond, label=above:{$w$}] (w) at (2, 0) {};
\node[necc, label=above:{$u_{\mathrm{new}}$}] (u) at (1, 0) {};
\node[] (dots) at (3, 0) {$\cdots$};

\draw[edge] (u) to (w);
\draw[edge] (start) to (u);
\draw[edge, dotted] (w) to (dots);
\end{tikzpicture}
\end{minipage}
\caption{After merging step}
\end{subfigure}%
\caption{Depiction of merging step.}
\label{fig:mergingstep}
\end{figure}
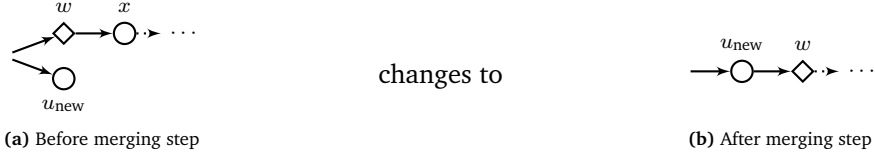%

On the other hand, if $(\alpha, \beta) \neq (0, 1)$, then the loose end
vertex~$v$ is replaced by
a subtree depending on the value of $(\alpha, \beta)$.
The possible new subtrees are listed in
Figure~\ref{fig:update:subtree_options}.
These subtrees replace vertex $v$, so they are connected to its parent.

\begin{figure}[!tbp]
\begin{subfigure}{\textwidth/4}
\begin{minipage}[c][15mm][c]{\textwidth}
\centering
\begin{tikzpicture}[x=8mm, y=3mm, font=\footnotesize]
\node[] (start) at (0, 0) {};
\node[cond, label=above:{$(i, 0)$}] (a0) at (1, 1) {};
\node[cond, label=below:{$(j, 1)$}] (b1) at (1, -1) {};
\node[necc, label=above:{$(j, 0)$}] (b0) at (2, 1) {};
\node[necc, label=below:{$(i, 1)$}] (a1) at (2, -1) {};
\node[loos] (l0) at (3, 1) {};
\node[loos] (l1) at (3, -1) {};

\draw[edge] (start) to (a0);
\draw[edge] (a0) to (b0);
\draw[edge] (b0) to (l0);
\draw[edge] (start) to (b1);
\draw[edge] (b1) to (a1);
\draw[edge] (a1) to (l1);
\end{tikzpicture}
\end{minipage}
\caption{$(\alpha, \beta) = (\blank, \blank)$}
\label{fig:update:subtree_options:__}
\end{subfigure}%
\begin{subfigure}{\textwidth/4}
\begin{minipage}[c][15mm][c]{\textwidth}
\centering
\begin{tikzpicture}[x=8mm, y=3mm, font=\footnotesize]
\node[] (start) at (0, 0) {};
\node[necc, label=below:{$(j, 0)$}] (b0) at (1, 0) {};
\node[loos] (l0) at (2, 0) {};

\draw[edge] (start) to (b0);
\draw[edge] (b0) to (l0);
\end{tikzpicture}
\end{minipage}
\caption{$(\alpha, \beta) = (0, \blank)$}
\end{subfigure}%
\begin{subfigure}{\textwidth/4}
\begin{minipage}[c][15mm][c]{\textwidth}
\centering
\begin{tikzpicture}[x=8mm, y=3mm, font=\footnotesize]
\node[] (start) at (0, 0) {};
\node[cond, label=below:{$(j, 1)$}] (b0) at (1, 0) {};
\node[loos] (l0) at (2, 0) {};

\draw[edge] (start) to (b0);
\draw[edge] (b0) to (l0);
\end{tikzpicture}
\end{minipage}
\caption{$(\alpha, \beta) = (1, \blank)$}
\end{subfigure}%
\begin{subfigure}{\textwidth/4}
\begin{minipage}[c][15mm][c]{\textwidth}
\centering
\begin{tikzpicture}[x=8mm, y=3mm, font=\footnotesize]
\node[] (start) at (0, 0) {};
\node[cond, label=below:{$(i, 0)$}] (b0) at (1, 0) {};
\node[loos] (l0) at (2, 0) {};

\draw[edge] (start) to (b0);
\draw[edge] (b0) to (l0);
\end{tikzpicture}
\end{minipage}
\caption{$(\alpha, \beta) = (\blank, 0)$}
\end{subfigure}%
\newline
\begin{subfigure}{\textwidth/4}
\begin{minipage}[c][15mm][c]{\textwidth}
\centering
\begin{tikzpicture}[x=8mm, y=3mm, font=\footnotesize]
\node[] (start) at (0, 0) {};
\node[loos] (l0) at (1, 0) {};

\draw[edge] (start) to (l0);
\end{tikzpicture}
\end{minipage}
\caption{$(\alpha, \beta) = (0, 0)$}
\end{subfigure}%
\begin{subfigure}{\textwidth/4}
\begin{minipage}[c][15mm][c]{\textwidth}
\centering
Empty graph.
\end{minipage}
\caption{$(\alpha, \beta) = (1, 0)$}
\end{subfigure}%
\begin{subfigure}{\textwidth/4}
\begin{minipage}[c][15mm][c]{\textwidth}
\centering
\begin{tikzpicture}[x=8mm, y=3mm, font=\footnotesize]
\node[] (start) at (0, 0) {};
\node[necc, label=below:{$(i, 1)$}] (b0) at (1, 0) {};
\node[loos] (l0) at (2, 0) {};

\draw[edge] (start) to (b0);
\draw[edge] (b0) to (l0);
\end{tikzpicture}
\end{minipage}
\caption{$(\alpha, \beta) = (\blank, 1)$}
\end{subfigure}%
\begin{subfigure}{\textwidth/4}
\begin{minipage}[c][15mm][c]{\textwidth}
\centering
\begin{tikzpicture}[x=8mm, y=3mm, font=\footnotesize]
\node[] (start) at (0, 0) {};
\node[loos] (l0) at (1, 0) {};

\draw[edge] (start) to (l0);
\end{tikzpicture}
\end{minipage}
\caption{$(\alpha, \beta) = (1, 1)$}
\end{subfigure}%
\caption{Possible subtrees to replace a loose end vertex $v$ for
$(\alpha, \beta)$.}
\label{fig:update:subtree_options}
\end{figure}
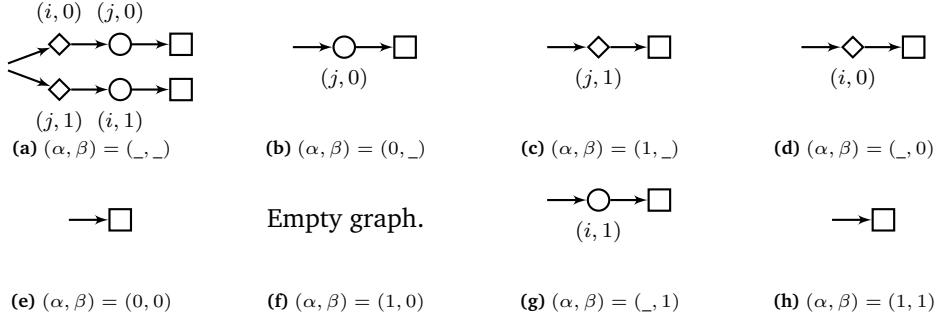

Observe that loose end vertices are always leaves, so the
update maintains Property~\ref{prop:looseend}.
Due to Property~\ref{prop:entries},
any loose end vertex $u \in V_{\mathrm{loose}, \gamma}^t$
has $\ent(C_u) = \ent(C_v)$, and $h^t(i', u) = \blank$
for $i' \in \{ i, j\}$
if and only if $i' \notin \ent(C_v) \cup (I_0^t \cup I_1^t)$.
Note that all subtrees in Figure~\ref{fig:update:subtree_options} contain
exactly these fixing vertices with $h^t(i', v) = \blank$ for $i' \in \{ i,
j\}$, which shows that Property~\ref{prop:entries} is maintained.
Property~\ref{prop:implicationtreeoutdegreetwo} holds after the
update, since only the tree of
Figure~\ref{fig:update:subtree_options:__} that
introduces a vertex with outdegree larger than one satisfies this property.
Also, Property~\ref{prop:rootedpathnotwoentries} is maintained,
since only fixing vertices with fixing~$(k, b)$ are introduced
with~$h^t(k, v) = \blank$, this means that~$k \notin I_0^t \cup I_1^t$,
and~$k$ is not an entry of any fixing on~$C_v$.

Concluding, in both cases $(\alpha, \beta) = (0, 1)$
and~$(\alpha, \beta) \neq (0, 1)$,
Properties~\ref{prop:looseend}--\ref{prop:rootedpathnotwoentries}
are maintained.
Last we show that the Properties~\ref{prop:eqconj} and~\ref{prop:infconj}
are also maintained, by showing that the tree updates correspond to the
elements that~$C$
yields in Proposition~\ref{prop:indexeqconj} and~\ref{prop:indexinfconj}.
Consider~$(\alpha, \beta) = (\blank, \blank)$.
Substituting the expression of Observation~\ref{obs:indexincr} in
Proposition~\ref{prop:indexeqconj} yields that
the elements~$\eqconj{\gamma}{t + 1}$
derived from~$C$ are~$C \cup \{ (i, 0) \}$
and~$C \cup \{ (j, 1) \}$, and these correspond to the conjunctions
derived from the new loose end vertices such as in
Figure~\ref{fig:update:subtree_options:__}.
Likewise, substituting in Proposition~\ref{prop:indexinfconj}
yields that the only element derived from~$C$ in~$\infconj{\gamma}{t + 1}$
is~$C \cup \{ (i, 0), (j, 1) \}$,
and this is represented by both necessary fixing vertices in
Figure~\ref{fig:update:subtree_options:__}.
The new $i_\gamma^{t+1}$-inf-conjunctions cannot be setwise contained in
an old $i_\gamma^{t}$-conjunction, because no $i_\gamma^t$-inf-conjunction
containing entries~$i$ or~$j$ exists.
This shows that the update is consistent
for~$(\alpha, \beta) = (\blank, \blank)$,
and the other cases with $(\alpha, \beta) \neq (0, 1)$ are analogous.

In the same way, for $(\alpha, \beta) = (0, 1)$,
the conjunction $C$ does not derive any elements in $\eqconj{\gamma}{t+1}$.
Instead, $C$ is added as element to $\infconj{\gamma}{t+1}$.
This is consistent with the update, since the update for $(\alpha, \beta)
= (0, 1)$ introduces a necessary fixing vertex whose conjunction
corresponds to~$C$.
Since all conjunctions in $\infconj{\gamma}{t+1}$ and
$\eqconj{\gamma}{t+1}$ are represented by~$\tree_\gamma^{t+1}$,
Lemma~\ref{lem:noinclusionwisecontainmentsinf}
and~\ref{lem:noinclusionwisecontainmentseq} show that no non-minimal
conjunctions are contained, so Property~\ref{prop:eqconj}
and~\ref{prop:infconj} are maintained.

Figure~\ref{fig:ex:exampletree} shows the implication tree
of Example~\ref{ex:algexample} with $i_{\gamma_1} = 6$.
We execute an index increasing event.
There is a single loose end vertex, and this
has~$(\alpha, \beta) = (0, 1)$.
Thus, if fixing $(7, 1)$ is applied, then an infeasible situation is
found. Hence, this conditional fixing vertex turns in a necessary fixing
vertex with fixing $(7, 0)$. We also apply a merging step to find the
implication tree as in Figure~\ref{fig:ex:exampletree:indexincreasingstep}.

\begin{figure}[!tbp]
\centering
\begin{tikzpicture}[x=12mm, y=3mm, font=\footnotesize]
\node[] (root) at (0, 0) {$r_{\gamma_1}$};

\node[necc, label=above:{$(7, 0)$}] (70) at (1, 0) {};

\node[cond, label=above:{$(2, 0)$}] (20) at (2, 0) {};
\node[necc, label=above:{$(8, 0)$}] (80) at (3, 0) {};

\draw[edge] (root) to (70);
\draw[edge] (70) to (20);
\draw[edge] (20) to (80);
\end{tikzpicture}
\caption{Implication tree of permutation $\gamma_1$ in
Example~\ref{ex:algexample} with $i_{\gamma_1} = 7$ and $x_1 = 1$.}
\label{fig:ex:exampletree:indexincreasingstep}
\end{figure}
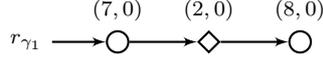

\paragraph{Selecting a fixing}
Line~\ref{alg:propagateallindividualcons:whilefix}
searches for a permutation $\gamma \in \Pi$
that has $\{ (i, b) \}$ as a $i_\gamma$-inf-conjunction,
and $i \notin I_{1-b}^t$.
This is a minimal inf-conjunction,
as otherwise infeasibility was detected at
Line~\ref{alg:propagateallindividualcons:inf},
so it corresponds, by definition,
with $\{ (i, b) \} \in \infconj{\gamma}{t}$
and $i \notin I_{1-b}^t$.
Recall that by Property~\ref{prop:infconj}
such sets correspond to necessary fixing vertices in the implication trees
that lie on a path from the root without any conditional fixing vertices
on it, and that this correspondence works both ways.
Because loose end vertices only occur as leaves
(Property~\ref{prop:looseend}),
such paths only contain the root vertex and other necessary
fixing vertices.
Hence, as a result of the choice of our implication tree data structure,
if the root vertex has a necessary fixing vertex as child,
then the fixing associated with that fixing vertex has to be applied for
completeness.
Moreover, if no such vertex exists, also no corresponding conjunction
in~$D_\gamma^t$ exists.

\paragraph{Variable fixing event}
Line~\ref{alg:propagateallindividualcons:varfixing}
applies a fixing $f = (i, b)$.
In conjunctions containing this fixing, the fixing can be removed.
Likewise, conjunctions containing the converse fixing can impossibly hold,
and need to be removed. Last, to ensure inclusionwise minimality,
non-minimal elements need to be removed, as well.
The following propositions describe the update for the conjunction-sets.

\begin{restatable}[Updating inf-conjunctions for variable fixing event]%
{proposition}{varfixinfconj}
\label{prop:varfixinfconj}
Consider a variable fixing event for fixing
$f = (i, b) \in [n] \times \binary$ at time~$t$.
For every $\gamma \in \Pi$ holds
\begin{equation}
\label{eq:varfixinfconj}
\infconj{\gamma}{t + 1} =
\left\{
C \setminus \{ f \}
:
\begin{aligned}[c]
&C \in \infconj{\gamma}{t}, \text{and} \\
&\text{for all}\
C' \in \infconj{\gamma}{t}\
\text{holds}\
C' \setminus \{ f \} \not\subsetneq C \setminus \{ f \}
\end{aligned}
\right\}%
.
\end{equation}
\end{restatable}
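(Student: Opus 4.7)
The plan is to prove the two set inclusions separately, pivoting on the observation that a variable fixing event does not alter the index~$i_\gamma$ (so the partial order $\preceqp{i_\gamma}$ is the same at times~$t$ and~$t+1$), and only shrinks the feasible set via $F^{t+1} = F^t \cap V(\{f\})$, where I write $F^t \define F(I_0^t, I_1^t)$ as in Proposition~\ref{prop:indexinfconj}.

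My workhorse is the trivial identity $F^{t+1} \cap V(D) = F^t \cap V(D \cup \{f\})$ valid for every conjunction~$D$. Combined with $i_\gamma^{t+1} = i_\gamma^t$, this yields the key equivalence: a conjunction~$D$ is an $i_\gamma^{t+1}$-inf-conjunction if and only if~$D \cup \{f\}$ is an $i_\gamma^t$-inf-conjunction. A companion observation is that any inclusionwise minimal $i_\gamma^{t+1}$-inf-conjunction~$D$ cannot contain~$f$: because $F^{t+1} \subseteq V(\{f\})$, the sets $F^{t+1} \cap V(D)$ and $F^{t+1} \cap V(D \setminus \{f\})$ coincide, so if~$f \in D$ then the strictly smaller $D \setminus \{f\}$ would also be an inf-conjunction, contradicting minimality.

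For the inclusion RHS~$\subseteq$~LHS, I take $C \in \infconj{\gamma}{t}$ satisfying the stated minimality condition. Applying the equivalence to~$D = C \setminus \{f\}$ immediately shows that $C \setminus \{f\}$ is an $i_\gamma^{t+1}$-inf-conjunction, since $(C \setminus \{f\}) \cup \{f\} \supseteq C$. To see that $C \setminus \{f\}$ is inclusionwise minimal I argue by contradiction: a strict subset $D' \subsetneq C \setminus \{f\}$ that is still an $i_\gamma^{t+1}$-inf-conjunction would make $D' \cup \{f\}$ an $i_\gamma^t$-inf-conjunction containing some minimal $C' \in \infconj{\gamma}{t}$, and then $C' \setminus \{f\} \subseteq D' \subsetneq C \setminus \{f\}$ would violate the minimality filter. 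For the reverse inclusion, given $D \in \infconj{\gamma}{t+1}$, the equivalence yields a minimal $C \in \infconj{\gamma}{t}$ with $C \subseteq D \cup \{f\}$, hence $C \setminus \{f\} \subseteq D \setminus \{f\} = D$ (using $f \notin D$); this set is itself an $i_\gamma^{t+1}$-inf-conjunction, so minimality of~$D$ forces $C \setminus \{f\} = D$, and the symmetric argument verifies the required minimality filter on this particular~$C$.

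The delicate point, which I expect to demand care rather than a deeper argument, is the interplay with the incompatible inf-conjunctions of Property~\ref{prop:infconj}. When $\bar f \in C$ and $\card{C} > 1$, the conjunction~$C$ becomes incompatible at time~$t+1$ but is no longer minimal there; the new minimal singleton $\{\bar f\}$ arises from the formula precisely via $C = \{f, \bar f\} \in \infconj{\gamma}{t}$ (present because~$i$ was unfixed at time~$t$), and the minimality filter correctly discards the larger $C$'s containing~$\bar f$. Thus the formula recovers both compatible and incompatible minimal inf-conjunctions at time~$t+1$ in one stroke, without a separate case analysis.
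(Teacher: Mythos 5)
Your proposal is correct and takes essentially the same route as the paper: both inclusions rest on the observation that $F(I_0^{t+1}, I_1^{t+1}) \cap V(D) = F(I_0^{t}, I_1^{t}) \cap V(D \cup \{f\})$ with $i_\gamma$ unchanged, and your equivalence lemma simply packages the paper's case distinction on whether a minimal time-$(t+1)$ conjunction was already an inf-conjunction at time~$t$ (i.e., whether the witnessing $C \in \infconj{\gamma}{t}$ is $\bar C$ itself or $\bar C \cup \{f\}$). Only your closing remark is slightly imprecise: for the permutation whose singleton inf-conjunction triggered the fixing, $\{\bar f\}$ already lies in $\infconj{\gamma}{t}$, so $\{f, \bar f\}$ is not minimal there and the new singleton arises from $C = \{\bar f\}$ itself—but this does not affect your main argument, which needs no such case analysis.
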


\begin{restatable}[Updating eq-conjunctions for variable fixing event]%
{proposition}{varfixeqconj}
\label{prop:varfixeqconj}
Consider a variable fixing event for fixing
$f = (i, b) \in [n] \times \binary$ at time~$t$.
For every $\gamma \in \Pi$ holds%
\begin{equation}
\label{eq:varfixeqconj}
\eqconj{\gamma}{t + 1} =
\left\{
C \setminus \{ f \}
:
\begin{aligned}[c]
&C \in \eqconj{\gamma}{t},
\text{and}\ \\
&\text{for all}\
C' \in \eqconj{\gamma}{t}\
\text{holds}\
C' \setminus \{ f \} \not\subsetneq C \setminus \{ f \},\
\text{and}
\\
&\text{for all}\
C' \in \infconj{\gamma}{t}\
\text{holds}\
C' \setminus \{ f \} \not\subseteq C \setminus \{ f \}
\end{aligned}
\right\}%
.
\end{equation}
\end{restatable}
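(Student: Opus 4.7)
The proof will establish both inclusions. For the $\supseteq$ direction, the crucial preliminary observation is that the third condition forces $\bar f \notin C$: since $i \notin I_0^t \cup I_1^t$ immediately before the fixing is applied, the minimal incompatible inf-conjunction $\{f, \bar f\} \in \infconj{\gamma}{t}$ exists, and $\{f, \bar f\} \setminus \{f\} = \{\bar f\}$ would be contained in $C \setminus \{f\}$ if $\bar f \in C$, violating the third condition. With this in hand, I show that $C \setminus \{f\}$ is an $i_\gamma$-eq-conjunction at time $t+1$ in two steps. First, any $x \in F(I_0^{t+1}, I_1^{t+1}) \cap V(C \setminus \{f\})$ has $x_i = b$, so it lies in $F(I_0^t, I_1^t) \cap V(C)$, and $C \in \eqconj{\gamma}{t}$ yields $x \preceqp{i_\gamma^t} \gamma(x)$. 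Second, to exhibit a vector realizing equality, I argue that $C \cup \{f\}$ cannot be a $k$-inf-conjunction at time $t$: otherwise some inclusionwise minimal $C' \in \infconj{\gamma}{t}$ with $C' \subseteq C \cup \{f\}$ would satisfy $C' \setminus \{f\} \subseteq C \setminus \{f\}$, contradicting the third condition. Hence there exists $x^\star \in F(I_0^t, I_1^t) \cap V(C \cup \{f\})$ with $x^\star \succeqp{i_\gamma^t} \gamma(x^\star)$, and combined with $x^\star \preceqp{i_\gamma^t} \gamma(x^\star)$ from $C \in \eqconj{\gamma}{t}$ this gives $x^\star \eqp{i_\gamma^t} \gamma(x^\star)$ with $x^\star \in F(I_0^{t+1}, I_1^{t+1})$.

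For the $\subseteq$ direction, take an inclusionwise minimal $\tilde C \in \eqconj{\gamma}{t+1}$ and set $C \define \tilde C \cup \{f\}$. I would show $C \in \eqconj{\gamma}{t}$: vectors in $F(I_0^t, I_1^t) \cap V(C)$ automatically lie in $F(I_0^{t+1}, I_1^{t+1}) \cap V(\tilde C)$ because enforcing $f$ inside $F(I_0^t, I_1^t)$ is equivalent to passing to $F(I_0^{t+1}, I_1^{t+1})$, so the $\preceqp{i_\gamma^t}$-inequality and the existence of an equality vector both transfer directly from $\tilde C$. Then $C \setminus \{f\} = \tilde C$ by construction, and the three conditions on $C$ follow by contradiction: a strict containment $C'' \setminus \{f\} \subsetneq C \setminus \{f\}$ for $C'' \in \eqconj{\gamma}{t}$ would produce, via the already-proven $\supseteq$ direction, a strictly smaller eq-conjunction at time $t+1$, contradicting the minimality of $\tilde C$; a containment $C' \setminus \{f\} \subseteq C \setminus \{f\}$ for $C' \in \infconj{\gamma}{t}$ would let me deduce that every $x \in F(I_0^{t+1}, I_1^{t+1}) \cap V(\tilde C)$ satisfies $C'$, forcing $x \precp{i_\gamma^t} \gamma(x)$ and thus destroying $\tilde C$'s eq-conjunction property.

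To avoid circularity between the two directions' minimality arguments, I would prove the forward direction's membership in $\eqconj{\gamma}{t+1}$ without invoking minimality first, then establish the reverse inclusion (which only uses that every element produced by the formula is in $\eqconj{\gamma}{t+1}$), and finally close minimality by a direct descent argument: any putative strictly smaller $\tilde C' \subsetneq C \setminus \{f\}$ in $\eqconj{\gamma}{t+1}$ gives $\tilde C' \cup \{f\} \in \eqconj{\gamma}{t}$ by the reverse-direction construction, which strictly refines $C$ after removing $f$ and contradicts the second condition.

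The main obstacle is the careful bookkeeping between $V(C)$, $V(C \setminus \{f\})$, and $V(C \cup \{f\})$ as the ambient set changes from $F(I_0^t, I_1^t)$ to $F(I_0^{t+1}, I_1^{t+1})$, together with the case split on whether $f \in C$ or not; the step closest to the heart of the matter is the one where the third condition is used to produce the equality-realizing vector, which is precisely the feature that distinguishes this proof from the analogous one for Proposition~\ref{prop:varfixinfconj}.
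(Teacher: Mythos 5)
Your forward ($\supseteq$) direction is essentially sound, and in one respect more explicit than the paper's own proof: you correctly isolate that the third condition is what guarantees an equality-realizing vector survives the fixing (the paper merely asserts that $C \setminus \{f\}$ remains an eq-conjunction at time $t+1$), and your use of the incompatible inf-conjunction to exclude $\bar f \in C$ is fine. The genuine gap is in the reverse ($\subseteq$) direction. You take a minimal $\tilde C \in \eqconj{\gamma}{t+1}$ and unconditionally set $C \define \tilde C \cup \{f\}$, claiming $C \in \eqconj{\gamma}{t}$. Membership in $\eqconj{\gamma}{t}$ requires inclusionwise minimality, which you never address, and which actually fails whenever $\tilde C$ is itself already an $i_\gamma^t$-eq-conjunction at time $t$: then $\tilde C$ is a strictly smaller eq-conjunction at time $t$ (note $f \notin \tilde C$ by minimality of $\tilde C$ at time $t+1$), so $\tilde C \cup \{f\}$ is not minimal and lies outside $\eqconj{\gamma}{t}$. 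This case is generic, e.g.\ whenever the fixed entry $i$ plays no role among the first $i_\gamma^t - 1$ positions of $x$ and $\gamma(x)$, the eq-conjunctions are unchanged by the fixing. The paper resolves this with a case distinction: if $\tilde C$ is an eq-conjunction at time $t$, one shows it is minimal there and takes $C = \tilde C$; only otherwise does one take $C = \tilde C \cup \{f\}$, whose minimality at time $t$ then follows because dropping $f$ yields the non-eq-conjunction $\tilde C$, while dropping any other fixing would contradict minimality of $\tilde C$ at time $t+1$.

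The omission propagates to two later steps. Your argument for the second condition invokes the ``already-proven $\supseteq$ direction'' for a competitor $C'' \in \eqconj{\gamma}{t}$, but that direction's conclusion was derived under the assumption that $C''$ satisfies the third condition, which you are not given; you need the extra dichotomy that either some vector in $F(I_0^{t+1}, I_1^{t+1}) \cap V(C'' \setminus \{f\})$ attains $\eqp{i_\gamma^t}$ (giving an eq-conjunction strictly inside $\tilde C$, contradicting its minimality), or none does, in which case $C'' \setminus \{f\}$ is an $i_\gamma^t$-inf-conjunction at time $t+1$ and, via $V(\tilde C) \subseteq V(C'' \setminus \{f\})$, destroys $\tilde C$'s equality vector. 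The same repair is needed in your closing descent argument for minimality, which leans on the flawed reverse-direction construction. All of this is fixable along the paper's lines, but as written the reverse inclusion and the minimality bookkeeping do not go through.
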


In terms of the implication trees,
for each $\gamma \in \Pi$,
the tree $\tree_\gamma^{t + 1}$ is created from $\tree_\gamma^t$
by a sequence of updates.
For all fixing vertices
$v \in V_{\mathrm{cond}, \gamma} \cup V_{\mathrm{necc}, \gamma}$
with $\ent(f_v) = i$, apply one of the following updates:
\begin{enumerate*}[label=\textbf{\emph{\small(\roman*)}},
ref=\emph{\small(\roman*)}]
\item \label{update:varfix:1}
If $f = f_v$, remove the vertex and connect its parent to its
children.
If, additionally, $v$ has a sibling, then also remove the subtree rooted
at the sibling.
\item \label{update:varfix:2}
Or, if $f = \bar f_v$ and $v$ is a conditional fixing vertex, remove the
vertex and its descendants.
\item \label{update:varfix:3}
Otherwise,
if $f = \bar f_v$ and $v$ is a necessary fixing vertex,
then get the first ancestor $u$ of
$v$ that is a conditional fixing vertex.
If $u$ does not exist, then the tree is marked as infeasible,
such that Line~\ref{alg:propagateallindividualcons:inf} returns
infeasibility directly after this update.
If $u$ does exist, replace $u$ by a
necessary fixing
vertex with associated fixing $\bar f_u$, and remove all its proper
descendants.
A \emph{merging step} is applied if $u$ has a sibling $w$.
This merging step is identical to the merging step at the index increasing
event.
\end{enumerate*}%

For example, after the index increasing event yielding
Figure~\ref{fig:ex:exampletree:indexincreasingstep},
an index increasing event is called for fixing $(7, 0)$.
Entry $7$ is thus added to $I_0^t$,
and the tree of the figure is updated such that the necessary fixing node
of fixing $(7, 0)$ is removed, and the root is connected to the
conditional fixing vertex with fixing $(2, 0)$.

We argue that the tree properties are maintained.
The updates remove whole subtrees, remove all fixing vertices with
entry~$i$, and apply merging steps.
If a leaf vertex~$u$ is not removed at a merging step,
then the only difference of the set $C_u$ before and after any merging
step is the removal of all fixings with entry $i$.
The loose end vertices are either removed, or remain leaves of the tree,
so Property~\ref{prop:looseend} is maintained.
All fixing vertices with entry $i$ are removed,
and~$I_0^{t + 1} \cup I_1^{t + 1} = I_0^t \cup I_1^t \cup \{ i \}$,
so Properties~\ref{prop:entries} and~\ref{prop:rootedpathnotwoentries} are
maintained.

To show Property~\ref{prop:implicationtreeoutdegreetwo},
note that no tree update can increase the degree of a vertex, so it
suffices to check that the property is maintained for each vertex with
outdegree two.
Let~$v$ be a vertex with outdegree larger than one before an update,
and let~$\hat u_1, \hat u_2, \hat w_1, \hat w_2$ be vertices as in
Figure~\ref{fig:prop:implicationtreeoutdegreetwo}, with $\hat v$ as $v$.
If $f_{\hat u_1} = f$, then either
update~\ref{update:varfix:1} removes the subtree rooted in $\hat u_2$,
or
update~\ref{update:varfix:3} is executed for due to $\hat w_2$.
Which of the two updates applies depends on the order in which the
vertices are processed, but the same result is yielded.
If $\bar f_{\hat u_1} = f$, then
update~\ref{update:varfix:2} removes the subtree rooted in $u_1$.
The case for~$f_{\hat u_2} = f$ and~$\bar f_{\hat u_2} = f$ is analogous.
In these cases, after the update the degree of $v$ is one, so the property
holds.
If both~$f_{\hat u_1}$ and $f_{\hat u_2}$ are neither~$f$ nor $\bar f$,
then either the substructure
(of~$v$ and its predecessors at maximal distance 2)
of Figure~\ref{fig:prop:implicationtreeoutdegreetwo} is maintained,
or~\ref{update:varfix:3} yields that a merging step is applied that
changes the degree of $v$ to one.

Last we argue that Properties~\ref{prop:eqconj} and~\ref{prop:infconj} are
maintained after applying all updates.

Let $v \in V_{\mathrm{cond}, \gamma}^t$ be a conditional fixing vertex to
which an update is applied.
On the one hand, if $f_v = f$, then all necessary fixing vertices in its
subtree induce a conjunction~$C$ with $f \in C$.
By removing vertex $v$ and reconnecting its parent to its
children, these conjunctions are updated to $C \setminus \{ f \}$,
consistent with Proposition~\ref{prop:indexinfconj}.
The same holds for conjunctions represented by loose end vertices in the
subtree.
Additionally, if $v$ has a sibling $v'$,
then by Property~\ref{prop:implicationtreeoutdegreetwo}
vertex~$v'$ is a conditional fixing vertex, and
the first child~$u$ of~$v$ is a necessary fixing vertex
with~$f_u = \bar f_{v'}$.
Hence, if all conditional fixings up to $v$ are applied, then $f_u$ must
be applied as well, so it is not possible to apply the fixing associated
with $v'$. Hence, the subtree rooted in $v'$ can be removed,
since it does not contain any minimal conjunction for time~$t+1$.
On the other hand, if~$f = \bar f_v$, then all conjunctions~$C$ induced by
necessary fixing vertices~$u$ have~$\bar f_v \in C$.
However, $C' = \{ f, \bar f \} \in \infconj{\gamma}{t}$,
so $C' \setminus \{ f \} \subsetneq C \setminus \{ f \}$.
Likewise, for loose end vertices in the subtree of $v$ that induce
conjunction~$C$ has $C' \setminus \{ f \} \subseteq C \setminus \{ f \}$.
In both cases, vertices in this subtree induce no inclusionwise minimal
conjunctions, so this subtree can be removed.

Let $v \in V_{\mathrm{necc}, \gamma}^t$ be a necessary fixing vertex to
which an update is applied, denote its associated conjunction by $C$.
On the one hand, if $f = f_v$, then $\bar f \in C$ and
update~\ref{update:varfix:1} is applied.
Because fixing $f$ is applied, conjunction $C$ can never be violated any
more, so $v$ can be removed. Vertex $v$ cannot have siblings due to
Property~\ref{prop:implicationtreeoutdegreetwo},
so this is all the update does.
On the other hand, if $f = \bar f_v$, then $f \in C$ and
update~\ref{update:varfix:3} is applied.
Let $u$ be the first ancestor of $v$ that is either a conditional
fixing vertex, or the root vertex.
If~$u$ is the root vertex, then $\{ f \}$ is an inf-conjunction,
so applying $f$ yields infeasibility for the permutation.
This is correctly marked.
Otherwise, if $u$ is a conditional fixing vertex,
then any necessary fixing vertex in the subtree of $u$ with associated
conjunction $C'$
has~$C \setminus \{ f \} \subsetneq C' \setminus \{ f \} = C'$,
and any loose end vertex in the subtree of $u$ with associated
conjunction $C'$
has~$C \setminus \{ f \} \subseteq C' \setminus \{ f \} = C'$.
Since $C \setminus\{f\}$ is a $i_\gamma^{t+1}$-inf-conjunction,
all these vertices can be removed.
In terms of the tree update, this is phrased as: one may not apply all
conditional fixings on the path to $u$, and~$f_u$ itself.
Thus, we can
replace the subtree of $u$ by a necessary fixing vertex $u_{\mathrm{new}}$
with $f_{u_{\mathrm{new}}} = \bar f_u$,
such that the conjunction associated with this necessary fixing vertex
corresponds to~$C \setminus \{f\}$.
If $u$ has a sibling $u'$, then a merging step is applied
that removes the child $w'$ of $u'$.
By Proposition~\ref{prop:implicationtreeoutdegreetwo}~$w'$ is a necessary
fixing vertex with~$f_{w'} = \bar f_{u} = f_{u_{\mathrm{new}}}$ as
associated fixing.
Since $C \setminus \{ f \}$ is the conjunction associated
with~$u_{\mathrm{new}}$, and because the conjunction associated with~$w'$
is inclusionwise larger than in $C \setminus \{f\}$,
this vertex~$w'$ is correctly removed.

To summarize, all loose end vertices or necessary fixing vertices that
induce a conjunction $C$ represent $C \setminus \{ f \}$.
These are all correct $i_\gamma^{t+1}$-inf-conjunctions or
$i_\gamma^{t+1}$-eq-conjunctions.
Only non-minimal conjunctions of its type are removed, and all
conjunctions in $\infconj{\gamma}{t+1}$ and $\eqconj{\gamma}{t+1}$ are
generated from conjunctions in $\infconj{\gamma}{t}$ and
$\eqconj{\gamma}{t}$, respectively.
Together with Lemma~\ref{lem:noinclusionwisecontainmentsinf}
and~\ref{lem:noinclusionwisecontainmentseq}, no other conjunctions can be
represented by $\tree_\gamma^{t+1}$, therewith proving
Property~\ref{prop:eqconj} and~\ref{prop:infconj}.

\paragraph{Running time analysis}
A timestamp~$t$ is used in the algorithm description,
but this is only required for the analysis, and not for the implementation.
Hence, it suffices to maintain the last state only.
The index sets of the fixings
$(I_0^t, I_1^t)$ are implemented by two Boolean arrays of size $n$
specifying if an entry $i$ is included in the index set or not.
For each permutation $\gamma \in \Pi$, we maintain the tree data structure
and the index $i_\gamma$.

In the following, we work with the implication tree representation and
suppose that the tree updates are applied as described in the previous
paragraphs, and that the sufficient conditions of completeness from
Corollary~\ref{cor:terminationcondition}
are used as selection condition.

Consider~$\gamma \in \Pi$ and time~$t$, and suppose there exist vertices
with outdegree larger than one.
Then Property~\ref{prop:implicationtreeoutdegreetwo} yields that any
rooted path to a leaf contains at least a conditional fixing vertex. The
only way that a new vertex with minimal outdegree two is introduced by the
algorithm is during an index increasing event for $\gamma$ where
$i_\gamma^t$ is no fixed point and $(\alpha, \beta) = (\blank, \blank)$,
which by Property~\ref{prop:entries} yields $i_\gamma^t,
\gamma^{-1}(i_\gamma^t) \notin I_0^t \cup I_1^t$
and~$i_\gamma^t, \gamma^{-1}(i_\gamma^t) \notin
\bigcup_{i' \in [i_\gamma^t-1]} \{i', \gamma^{-1}(i')\}
\setminus (I_0^t \cup I_1^t)$.
In particular, this shows that for all $i' < i_\gamma^t$ we
have~$\gamma^{-1}(i') \neq i_\gamma^t$,
i.e.~$i' \neq \gamma(i_\gamma^t)$,
implying that $\gamma(i_\gamma^t) \geq i_\gamma^t$.
Since $i_\gamma^t$ is no fixed point of $\gamma$,
$\gamma(i_\gamma^t) > i_\gamma^t$ follows.
Also, $\gamma^{-1}(i_\gamma^t) > i_\gamma^t$ follows
since $i_\gamma^t$ is not a fixed point,
and $\gamma^{-1}(i_\gamma^t) \neq i'$ for any $i' < i_\gamma^t$.
Concluding, all conditions of \ref{cor:terminationcondition:C3} of
Corollary~\ref{cor:terminationcondition} hold, meaning that
sufficient conditions
for completeness of $\gamma$ are met. Hence, the algorithm will never
introduce a second vertex with minimal outdegree two.
This means that the implication tree can be encoded by two paths.
Moreover, because of Property~\ref{prop:rootedpathnotwoentries} we can
encode these two paths as two separate arrays of length $n$.

As a result, one only needs to store the root vertex,
and for each of the two possible branches one vertex per entry for the
fixing vertices, and a loose end vertex.
To allow for constant-time lookups and insertions of the fixing vertices
where the entry has a certain fixing, two arrays of size $n$ are
maintained, in which at entry $i$ (necessary or conditional) fixing
vertices are stored whose fixings have entry $i$ in either path.

Furthermore,
the algorithm maintains $\queue_F$,
which consists of a subset of fixings that must be applied as a result of
Line~\ref{alg:propagateallindividualcons:whilefix} of the algorithm.
Specifically, this set is empty only if the condition of
Line~\ref{alg:propagateallindividualcons:whilefix} is not met,
which means that valid fixings can never be missed.
Each time an implication tree for a permutation is updated,
it is checked if the root has a necessary fixing vertex as child.
If that is the case, then this fixing is added to $\queue_F$, if not
already there.
This is a constant-time check, since the root has at most two children due
to Property~\ref{prop:implicationtreeoutdegreetwo}.
By implementing $\queue_F$
as a stack and a Boolean direct-lookup array, pushing
specific elements to the set, popping the lastly added element from
the set,
and checking if an element is contained are constant-time operations.
The while-loop of
Line~\ref{alg:propagateallindividualcons:whilefix}
corresponds to iteratively applying virtual fixings popped from $\queue_F$
until it is empty.

Likewise, the set $\queue_\Pi$ is the set of permutations for
which the sufficient conditions of completeness of
Corollary~\ref{cor:terminationcondition} do not hold.
These conditions are checked for a permutation $\gamma \in \Pi$
if the index $i_\gamma$ increases,
or if a fixing is applied.
The conditions can be checked in constant time:
For~\ref{cor:terminationcondition:C1} there are at most two leaves since
there is at most one vertex with outdegree two and loose end vertices are
always a leaf of the tree;
and for \ref{cor:terminationcondition:C3} this corresponds with checking
if the root vertex only has conditional fixing vertices as child,
of which at most two could exist due to
Property~\ref{prop:implicationtreeoutdegreetwo}.

At initialization,
consistent with the initial state of the algorithm,
$\queue_F$ is empty and~$\queue_\Pi$ consists of all permutations.

Since loose end vertices are leaves, and the tree only has at most a
single vertex with outdegree 2 (and the rest having smaller outdegree),
implication trees have at most two loose end vertices.
At an index increasing event the loose end vertices are replaced by
subtrees having at most six vertices. Hence, a constant number of vertices
is introduced at an index increasing event.
Introduced vertices can be removed at most once,
and (at a merging step) changed from a conditional fixing vertex type to a
necessary fixing type. Transforming the vertex type back is no option, so
this can happen at most once, as well.
This means that tree updates at the index increasing or variable fixing
events take amortized constant time.

As one of the sufficient conditions of completeness is that
$i_\gamma^t > n$, and $i_\gamma^t$ increases by one for every index
increasing event and never decreases, the index increasing events can only
be called $n$ times per permutation,
so that is at most $n |\Pi|$ times in total.
Likewise, at most $n$ different fixings can be applied before
infeasibility is found,
so at most~$n + 1$ variable fixing events can occur at most.

Also, the bottleneck of initialization is the preparation of the constant
number of arrays of size $n$ for each permutation.
Hence, collecting all observations above,
the running time of this algorithm is $\bigo(n |\Pi|)$.
Specifically, with this algorithm specification the bound is tight,
as this corresponds to the space requirement at initialization.

\section{Propagating Lexicographic Orders for Cyclic Groups}
\label{sec:lexleadersgroup}
The propagation algorithm of the previous section ensures that solutions
are lexicographically not smaller than their image w.r.t.\ the
permutations in a set~${\Pi \subseteq \symmetricgroup{n}}$.
This is enforced by a propagation loop, which
determines valid variable
fixings for each individual constraint~$x \succeq \gamma(x)$, for
each~$\gamma \in \Pi$.
The derived set of fixings, however, is not necessarily complete,
as illustrated next:%
\begin{example}
\label{ex:indivpropnotcomplete}
Let $\tilde\gamma = (1, 2, 3, 4, 5)$ and $\Pi = \gen{\tilde\gamma}$.
Consider the set of fixings~$I_0 = \{2, 5\}$ and~$I_1 = \emptyset$.
The set of vectors that are feasible for all permutations are~%
$\xymretope\Pi \cap F(I_0, I_1) =
\{ (0, 0, 0, 0, 0), (1, 0, 0, 0, 0), (1, 0, 1, 0, 0) \}$,
which means that the fourth entry is a zero-fixing
in~$\xymretope\Pi \cap F(I_0, I_1)$.
However, this entry cannot be detected by the propagation loop,
as~$(1, 0, 1, 1, 0) \in \xymretope\delta$
for~$\delta \in \{ \gamma^1, \gamma^2, \gamma^4 \}$,
and~$(1, 0, 0, 1, 0) \in \xymretope{\gamma^3}$.
This shows that~$I_0$ and~$I_1$ cannot be extended if we consider the
permutations of~$\Pi$ independently.
\end{example}

Consequently, there are variable fixings that cannot be detected by
treating the elements in a set of permutations~$\Pi$ individually.
In particular, this is also not the case if~$\Pi$ forms a cyclic group.
The aim of this section is to close this gap for cyclic groups, i.e.,
devising algorithms finding the complete set of fixings for different
types of cyclic groups.

If all entries are fixed initially, a complete propagation
algorithm for lexicographic leaders in a group reduces to determine
if the single possible vector adhering to the fixings is the lexicographic
leader in its $\Gamma$-orbit.
Babai and Luks~\cite{babai1983canonical} show that this is
\coNP-complete.
This means one cannot expect to find a polynomial-time algorithm to find
the complete set of fixings of lexicographic leaders in the
$\Gamma$-orbits, for general symmetry groups $\Gamma \leq
\symmetricgroup{n}$, unless $\P = \coNP$.
Luks and Roy~\cite{luksroy2002symbreakformulas} also show that, although
the problem of testing elements for being lexicographically maximal in
their group orbits is \coNP-complete, for Abelian groups there always
exists an ordering of the group support such that this problem becomes
polynomial. Roy~\cite{roy2007symmetry} gives a generalization of this
result for a wider class of groups.

In the following, we provide two propagation algorithms.
First, we consider one for groups generated by a single cycle
that has a \emph{monotone representation}, or subgroups of such groups.
After that, groups are considered that have a generator with a monotone
and \emph{ordered} representation.
What monotone and ordered representation embraces is made precise below.
Although these requirements do not allow to handle all cyclic groups, we
will discuss in Section~\ref{sec:computational} how our algorithms can
still be used to handle symmetries in arbitrary binary programs.
That is, we describe how the technical assumptions on the groups can be
guaranteed for every binary program.

\subsection{Cyclic Group Generated by Monotone Cycle or Subgroup Hereof}
\label{sec:lexleadersgroup:circular}
Let $\Gamma \leq \symmetricgroup{n}$
and two disjoint sets $I_0, I_1 \subseteq [n]$ defining initial fixings.
To find the complete set of fixings for lexicographic leaders in the
$\Gamma$-orbit, one needs to determine
the maximal sets~$J_0, J_1 \subseteq [n]$ such that~%
$\xymretope\Gamma \cap F(I_0, I_1) = \xymretope\Gamma \cap F(J_0, J_1)$.
The simple Algorithm~\ref{alg:completesinglemonotonecycle}
shows how to find the complete set of fixings for lexicographic leaders in
the $\Gamma$-orbit.
Starting with the initial set of fixings $(I_0, I_1)$,
the algorithm extends this set with all necessary fixings for completeness
for the constraint~$x \succeq \gamma(x)$ for all~$\gamma \in \Gamma$,
and then checks for each remaining non-fixed entry if any
lexicographically maximal vector exists in its $\Gamma$-orbit
where the non-fixed entry is fixed to zero or to one.
To simplify the analysis of
Algorithm~\ref{alg:completesinglemonotonecycle}, we maintain a
timestamp~$t$, starting at 0,
and
the set of fixings at time~$t$ is denoted by~$I_0^t$ and~$I_1^t$.

Fixings valid for each individual constraint $x \succeq \gamma(x)$
for $\gamma \in \Gamma$ are also valid fixings
in~$\xymretope\Gamma \cap F(I_0, I_1)$,
so the set $(I_0^t, I_1^t)$ after
Line~\ref{alg:completesinglemonotonecycle:determine}
is a valid set of fixings for $\xymretope\Gamma \cap F(I_0, I_1)$.
Also, since~%
$
\xymretope\Gamma \cap F(I_0^t, I_1^t)
=
\left(\xymretope\Gamma \cap F(I_0^t \cup \{ i \}, I_1^t) \right)
\dot\cup
\left(\xymretope\Gamma \cap F(I_0^t, I_1^t \cup \{ i \}) \right)
$,
the updates of Line~\ref{alg:completesinglemonotonecycle:peek0}
and~\ref{alg:completesinglemonotonecycle:peek1}
ensure that
$
\xymretope\Gamma \cap F(I_0^t, I_1^t)
=
\xymretope\Gamma \cap F(I_0, I_1)
$
for all $t \geq 0$.
Hence, the algorithm never applies incorrect fixings.
If no fixing is applied in an iteration for index~$i$,
then~%
$\xymretope\Gamma \cap F(I_0^t \cup \{ i \}, I_1^t) \neq \emptyset$
and~%
$\xymretope\Gamma \cap F(I_0^t, I_1^t \cup \{ i \}) \neq \emptyset$
certify the existence of two vectors~%
$x^0, x^1 \in \xymretope\Gamma \cap F(I_0, I_1)$
with $x^0_i = 0$ and $x^1_i = 1$,
respectively.
The algorithm iterates over all indices $i \in [n]$
with $i \notin I_0^0 \cup I_1^0$,
so no valid fixing could be missed.
This shows that the algorithm is correct.

\begin{algorithm}[!tbp]
\caption{Find the complete set of fixings of
$\xymretope\Gamma \cap F(I_0, I_1)$
with $\Gamma \leq \symmetricgroup{n}$}
\label{alg:completesinglemonotonecycle}
\SetKwInOut{Input}{input}
\SetKwInOut{Output}{output}
\Input{%
group~$\Gamma \leq \symmetricgroup{n}$,
sets~$I_0, I_1 \subseteq [n]$}
\Output{%
the message \Infeasible, or \Feasible and two subsets of~$[n]$
}
\lIf{$\xymretope\Gamma \cap F(I_0, I_1) = \emptyset$}{\Return \Infeasible}
\label{alg:completesinglemonotonecycle:inf}
$t \gets 0$;\
$
(I_0^t, I_1^t) \gets \text{the complete set of fixings for each individual
constraint}\
x \succeq \gamma(x)\
\text{for all}\
\gamma \in \Gamma
$\;
\label{alg:completesinglemonotonecycle:determine}
\ForEach{$i \in [n] \setminus (I_0^0 \cup I_1^0)$}
{
\lIf{$\xymretope\Gamma \cap F(I_0^t \cup \{ i \}, I_1^t) = \emptyset$}{
$(I_0^{t + 1}, I_1^{t + 1}) \gets (I_0^{t}, I_1^{t} \cup \{ i \})$;
$t \gets t + 1$%
}
\label{alg:completesinglemonotonecycle:peek0}
\lIf{$\xymretope\Gamma \cap F(I_0^t, I_1^t \cup \{ i \}) = \emptyset$}{
$(I_0^{t + 1}, I_1^{t + 1}) \gets (I_0^{t} \cup \{ i \}, I_1^{t})$;
$t \gets t + 1$%
}
\label{alg:completesinglemonotonecycle:peek1}
}
\Return \Feasible, $(I_0^t, I_1^t)$\;
\end{algorithm}

In the Lines~\ref{alg:completesinglemonotonecycle:inf},%
~\ref{alg:completesinglemonotonecycle:peek0}
and~\ref{alg:completesinglemonotonecycle:peek1}
it is checked whether a vector exists that is lexicographically maximal in
the $\Gamma$-orbit, and satisfies certain fixings.
Recall the result by Babai and Luks~\cite{babai1983canonical},
which shows that a restriction of this problem is \coNP-complete.
Therefore, one cannot expect a polynomial-time realization of this
algorithm for general groups, unless~$\P = \coNP$.
In the remainder of this section,
we restrict ourselves to groups generated by a
\emph{monotone} cycle (or subgroups hereof),
for which we show a polynomial-time realization of
Algorithm~\ref{alg:completesinglemonotonecycle}.
A cycle $\zeta$ is called \emph{monotone} if the cycle has exactly one
entry $i$ with~$\zeta(i) < i$.
Let~$\Gamma \leq \gen{(1, \dots, n)}$.
With the following proposition, we argue how the checks of
Line~\ref{alg:completesinglemonotonecycle:inf},%
~\ref{alg:completesinglemonotonecycle:peek0}
and~\ref{alg:completesinglemonotonecycle:peek1}
can be applied efficiently.

\begin{proposition}
\label{prop:monotonecyclicgroupfeasibility}
Let~$I_0, I_1 \subseteq [n]$ be disjoint index sets defining a complete
set of fixings for the constraint~$x \succeq \gamma(x)$
for each permutation~$\gamma \in \Gamma \leq \gen{(1, \dots, n)}$.
The set~$\xymretope\Gamma \cap F(I_0, I_1)$ is non-empty if and only if
the sets~$\xymretope\gamma \cap F(I_0, I_1)$ are non-empty for every
permutation~$\gamma \in \Gamma$.%
\end{proposition}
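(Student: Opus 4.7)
The ``only if'' direction is immediate from the definition $\xymretope{\Gamma} = \bigcap_{\gamma \in \Gamma} \xymresack{\gamma}$: every $x \in \xymretope{\Gamma} \cap F(I_0, I_1)$ lies in each $\xymresack{\gamma} \cap F(I_0, I_1)$.

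For the ``if'' direction, which is the substantive part, I would construct an explicit witness $x^\star \in \xymretope{\Gamma} \cap F(I_0, I_1)$. Because $\Gamma \leq \gen{(1,\dots,n)}$ is cyclic, write $\Gamma = \gen{\delta}$ where $\delta = (1,\dots,n)^d$ for some $d \geq 1$; every element of $\Gamma$ then acts on $[n]$ as a cyclic shift, and all orbits of $\Gamma$ have a common size. My plan is to process the indices $i = 1, 2, \dots, n$ in order and greedily assign $x^\star_i$: if $i \in I_0 \cup I_1$, use the forced value; otherwise tentatively try $x^\star_i = 1$ and commit to it exactly when the prefix assigned so far still admits an extension to some vector in $\xymresack{\gamma} \cap F(I_0, I_1)$ \emph{for every} $\gamma \in \Gamma$, and set $x^\star_i = 0$ otherwise.

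Two claims must be verified. \emph{(a) The greedy step is never stuck}: at least one of the two values extends to every $\xymresack{\gamma}$. This follows from the completeness hypothesis together with the non-emptiness of each $\xymresack{\gamma} \cap F(I_0, I_1)$. Informally, if neither value extended simultaneously for all $\gamma$, then for some $\gamma$ one of the values would already be excluded as a valid fixing, contradicting that $(I_0, I_1)$ is complete for each individual constraint (after accounting for the prefix commitments, which by induction are compatible with every individual $\gamma$). \emph{(b) The output $x^\star$ lies in every $\xymresack{\gamma}$}: a violation $x^\star \prec \gamma(x^\star)$ would occur at the first index $i$ where $x^\star_i = 0$ but $\gamma(x^\star)_i = 1$. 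Since $\gamma$ is a cyclic shift, $\gamma(x^\star)_i$ equals some previously committed entry $x^\star_{\gamma^{-1}(i)}$; the greedy rule at step $i$ would therefore have chosen $x^\star_i = 1$, ruling out the violation.

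The main obstacle is part~(a), namely proving rigorously that individual-feasibility of all prefixes extends to individual-feasibility of the final vector simultaneously across all $\gamma \in \Gamma$. The argument must exploit the monotonicity of the cycle $(1,\dots,n)$: the only ``decreasing'' edge is the wrap-around $n \mapsto 1$, so that the constraint $x \succeq_i \gamma(x)$ depends only on entries $x_j$ with $j$ in a controlled range relative to $i$ and the shift amount. I expect to formalize this through an inductive invariant along the greedy process, using the completeness of $(I_0, I_1)$ together with the cyclic-shift structure to preclude any scenario in which a locally-feasible prefix could later turn globally-infeasible for some $\gamma \in \Gamma$.
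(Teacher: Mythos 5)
Your ``only if'' direction is fine, and the idea of exhibiting an explicit witness is the right one, but the substantive part of your plan is not a proof: the step you yourself flag as ``the main obstacle'' -- claim~(a), that the greedy never gets stuck -- is exactly where all the difficulty of the proposition lives, and the justification you sketch for it does not work. Completeness is a hypothesis about the sets $(I_0,I_1)$ only. Once your greedy has committed values to previously unfixed entries, the relevant fixing set is strictly larger than $(I_0,I_1)$, and completeness of $(I_0,I_1)$ says nothing about it: deriving \emph{new} forced fixings after additional commitments is precisely what propagation does, so there is no contradiction with completeness if, after some prefix, one permutation $\gamma_1$ forces $x_i=0$ while another $\gamma_2$ forces $x_i=1$. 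Ruling out that scenario for every prefix your greedy can reach is a statement at least as strong as the proposition itself (the proposition only asserts that \emph{some} vector lies in $\xymretope{\Gamma}\cap F(I_0,I_1)$, not that every per-permutation-extendable prefix, let alone the ``prefer~1'' prefix, extends), so deferring it to an ``inductive invariant to be formalized'' leaves the proof essentially open. There is also a slip in your claim~(b): for a cyclic shift, $\gamma^{-1}(i)$ is in general \emph{larger} than $i$, so $\gamma(x^\star)_i$ is not determined by previously committed entries at step~$i$ and the greedy cannot have ``seen'' the violation; on the other hand, (b) is immediate anyway once the invariant survives to index $n$, since a fully committed vector that is extendable within $\xymresack{\gamma}\cap F(I_0,I_1)$ is itself in that set.

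For comparison, the paper's proof sidesteps the greedy entirely by writing down a closed-form witness: $\tilde x_i=1$ exactly for $i\in I_1\cup\{\ihat\}$, where $\ihat$ is the first unfixed entry (and the case $I_0\cup I_1=[n]$ is handled separately). It then shows $\tilde x\succeq\gamma(\tilde x)$ for every $\gamma\in\Gamma$ by contradiction, with an inductive claim that exploits the explicit shift formula $\gamma^{-1}(i)=i+\ihat-\icheck_\gamma$ and invokes completeness only in its legitimate form, namely for the original sets $(I_0,I_1)$ (e.g.\ in the form of Remark~\ref{remark:completenessimplication}). If you want to salvage your approach, you would either have to prove the ``never stuck'' invariant for your specific greedy -- which will require an argument of comparable depth to the paper's induction -- or replace the greedy by such an explicit candidate whose feasibility can be verified against each $\gamma$ using only completeness of $(I_0,I_1)$.
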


The checks of
Line~\ref{alg:completesinglemonotonecycle:inf},%
~\ref{alg:completesinglemonotonecycle:peek0}
and~\ref{alg:completesinglemonotonecycle:peek1}
can be executed using
Proposition~\ref{prop:monotonecyclicgroupfeasibility},
by computing the set of fixings that is complete for
each individual constraint $x \succeq \gamma(x)$,
for each~$\gamma \in \Gamma$.
This is possible, for instance, with
Algorithm~\ref{alg:propagateallindividualcons} of
Section~\ref{sec:completepermset} in $\bigo(n \ord(\Gamma))$ time.
As a benefit, the result of that algorithm gives $(I_0^0, I_1^0)$,
which satisfies
$
\xymretope\Gamma \cap F(I_0, I_1)
=
\xymretope\Gamma \cap F(I_0^0, I_1^0)
$,
as needed in Line~\ref{alg:completesinglemonotonecycle:determine}.
There are at most $n$ unfixed entries, so the subroutine that determines
the complete set of fixings for each permutation in $\Gamma$ is called at
most $1 + 2n$ times, taking~$\bigo(n \ord(\Gamma))$ time
when using Algorithm~\ref{alg:propagateallindividualcons}.
Hence, the total running time is $\bigo(n^2 \ord(\Gamma))$.
Recall that $\Gamma \leq \gen{(1, \dots, n)}$,
such that $\ord(\Gamma) \leq n$,
so that the running time is polynomially bounded.

\begin{remark}
Fewer calls to the subroutine are required if a
vector~$\tilde x \in \xymretope\Gamma \cap F(I_0, I_1)$ can be determined.
If~$\tilde x_i = 0$,
we can avoid checking
if~$\xymretope\Gamma \cap F(I_0^t \cup \{ i \}, I_1^t)$ is empty or not.
The same argument can be used for~$\tilde x_i = 1$.
\end{remark}

Last, for the missing proof of
Proposition~\ref{prop:monotonecyclicgroupfeasibility},
we make repeated use of the following argument:
\begin{remark}
\label{remark:completenessimplication}
Let $I_0, I_1 \subseteq [n]$ be disjoint index sets defining a complete
set of fixings for the constraint $x \succeq \gamma(x)$
for each permutation~$\gamma \in \Gamma$.
Let~$\gamma \in \Gamma$
with~$\xymretope\gamma \cap F(I_0, I_1) \neq \emptyset$,
and $k \in [n]$.
Suppose that for all~$i < k$
we either have $i, \gamma^{-1}(i) \in I_0$ or $i, \gamma^{-1}(i) \in I_1$.
This means that for all $x \in F(I_0, I_1)$ and $i < k$
we have $x_i = \gamma(x)_i$, i.e.\@~$x \eqp{k} \gamma(x)$.
Since the set of fixings is complete
with respect to~$x \succeqp \gamma(x)$ for~$\gamma$,
and for any $x \in F(I_0, I_1)$ the vectors~$x$ and~$\gamma(x)$
are identical up to entry $k$:
If~$k \in I_0$, then~$\gamma^{-1}(k) \in I_0$.
Likewise, if~$\gamma^{-1}(k) \in I_1$, then~$k \in I_1$.
\end{remark}

\begin{proof}[%
Proof of Proposition~\ref{prop:monotonecyclicgroupfeasibility}]
Let~$I_0, I_1 \subseteq [n]$ be disjoint sets that define
for each~$\gamma \in \Gamma$
a complete set of fixings for~$\xymretope\gamma \cap F(I_0, I_1)$.
For brevity, we denote~$F \define F(I_0, I_1)$.
By definition,~$\xymretope\Gamma = \bigcap_{\gamma \in \Gamma}
\xymretope\gamma$.
Hence, if there exist~$\gamma \in \Gamma$
with~$\xymretope\gamma \cap F = \emptyset$, then~$\xymretope\Gamma \cap F
= \emptyset$ as
well, proving the first implication.
In the remainder of the proof, we show the converse implication.
Suppose that~$\xymretope\gamma \cap F$ is non-empty for all
permutations~$\gamma \in \Gamma$. We now show that there exists a
vector~$\tilde x \in \xymretope\Gamma \cap F$.
First, if~$I_0 \cup I_1 = [n]$, there are no unfixed
entries. Thus the set~$F$ consists of a single element~$\tilde x$.
By assumption,~$\xymretope\gamma \cap F \neq \emptyset$ for
every~$\gamma \in \Gamma$, so~$\tilde x \in \xymretope\gamma$,
meaning that~$\xymretope\Gamma \cap F
= \bigcap_{\gamma \in \Gamma} \xymretope\gamma \cap F
= \{ \tilde x \} \neq \emptyset$.

Second, suppose~$I_0 \cup I_1 \subsetneq [n]$.
Let~%
$\ihat
\define
\min\{i : i \in [n] \setminus (I_0 \cup I_1 )\}$
be the first unfixed entry of the vectors in~$F$,
and for any~$\gamma \in \Gamma$
let~$\icheck_\gamma
\define
\min\{ \gamma^{-1}(i) : i \in [n] \setminus (I_0 \cup I_1) \}$
be the first unfixed entry of a vector~$\gamma(x)$ for vectors~$x \in F$.
If the permutation is clear
from the context, we drop the subscript of~$\icheck_\gamma$.
Let~$\tilde x \in F$
with~$\tilde x_i = 1$ if~$i \in I_1 \cup \{ \ihat \}$,
and~$\tilde x_i = 0$ otherwise.
We claim that~$\tilde x \in \xymretope\Gamma \cap F$,
which completes the proof.

For the sake of contradiction, assume there is a $\gamma \in \Gamma$
with~$\tilde x \prec \gamma(\tilde x)$.
Then, there is~$i \in [n]$ such that~$\tilde x \eqp{i} \gamma(\tilde x)$
and~$\tilde{x}_i = 0 < \perm(\tilde{x})_i = 1$.
Since~$\xymresack{\perm} \cap F \neq \emptyset$, we thus have~$i \geq
\min\{\ihat, \icheck\}$.
That is, $k, \inv{\perm}(k) \in I_0 \cup I_1$ for all~$k < m
\define \min \{\ihat, \icheck\}$,
and for all $x \in F$ holds that $x \eqp{m} \gamma(x)$.
Note that therefore~$\tilde{x} \succ \perm(\tilde{x})$ if~$\tilde{x}_m =
1$ and~$\perm(\tilde{x})_m = 0$.
We will use this observation to find a contradiction in the following.

If $\ihat \leq \icheck$,
then $\gamma^{-1}(\ihat) \notin I_1$, because otherwise the complete
propagation algorithm for~$\xymretope\gamma \cap F$ had fixed~$\ihat$
to~$1$: see Remark~\ref{remark:completenessimplication}.
Since $\gamma$ has no fixed points because it is a cyclic shift,
$\gamma^{-1}(\ihat) \neq \ihat$, so $\gamma(\tilde x)_{\ihat} = 0$.
This implies $\tilde x_{\ihat} > \gamma(\tilde x)_{\ihat}$,
contradicting $\tilde x \prec \gamma(\tilde x)$.
Hence,~$\ihat > \icheck$,
and thus~$\icheck \in I_0 \cup I_1$.
If~$\icheck \in I_0$, then $\gamma^{-1}(\icheck)$ is necessarily contained
in $I_0$, as otherwise any $x \in F$
with~$\gamma(x)_{\icheck} = 1$
satisfies~$x \eqp{\icheck} \gamma(x)$
and~$x_{\icheck} < \gamma(x)_{\icheck}$,
which contradicts completeness of the fixings with respect to $\gamma$.
Thus, in the remainder we have $\ihat > \icheck$ and $\icheck \in I_1$.%

If~$\gamma(\tilde x)_{\icheck} = 0$, then~$\tilde x \succ \gamma(\tilde x)$
which contradicts the assumption
that~$\tilde x \prec \gamma(\tilde x)$.
Therefore~$\gamma(\tilde x)_{\icheck} = 1$, which means that~%
$\gamma^{-1}(\icheck) \in I_1 \cup \{ \ihat \}$, and by the definition
of~$\icheck$ only~$\gamma^{-1}(\icheck) = \ihat$ remains.
Let~$j \define \min\{ i \in [n] : \tilde x_i \neq \gamma(\tilde x)_i \}$
be the
first entry where the values of~$\tilde x$ and~$\gamma(\tilde x)$ differ.
Then $\tilde x_j = 0$ by $\tilde x \prec \gamma(\tilde x)$.
It is not possible that~$j = \ihat$,
as~$\tilde x_j = 0 \neq 1 = \tilde x_{\ihat}$.
Using the following claim,
we show that~$j \neq \ihat$ also yields a contradiction.
\begin{claim*}
If~$\ihat > \icheck$,
any~$x \in \xymretope\gamma \cap F$ with~$x_{\ihat} = 1$
has~$x_i = \tilde x_i$ for all~%
$i \leq \min\{ n, j + \ihat - \icheck - 1\}$.
\end{claim*}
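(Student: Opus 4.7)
The plan is to prove the claim by strong induction on~$i$. Set $s \define \ihat - \icheck$; since $\gamma$ is a power of $(1,\dots,n)$ and $\gamma^{-1}(\icheck) = \ihat$, the inverse $\gamma^{-1}$ acts as the shift $k \mapsto k + s$ (modulo $n$, but in the ranges relevant for the argument the shift will stay inside $[1,n]$, as I verify at the end). The base cases $i \leq \ihat$ are immediate: for $i < \ihat$, the minimality of $\ihat$ forces $i \in I_0 \cup I_1$ and hence $x_i = \tilde x_i$, while $x_{\ihat} = 1 = \tilde x_{\ihat}$ holds by the hypothesis combined with the definition of $\tilde x$.

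For the inductive step, fix $\ihat < i \leq \min\{n, j + s - 1\}$ and assume $x_{i'} = \tilde x_{i'}$ for all $i' < i$. Let $k \define i - 1 \geq \ihat$. The first step is to establish $x \eqp{k - s + 1} \gamma(x)$. For $i' \leq k - s$ we have $\gamma(x)_{i'} = x_{i' + s}$, and the inductive hypothesis yields $x_{i'} = \tilde x_{i'}$ and $x_{i' + s} = \tilde x_{i' + s}$ (noting $i' + s \leq k$). Since $i' \leq k - s \leq j - 2 < j$, the defining property of~$j$ gives $\tilde x_{i'} = \gamma(\tilde x)_{i'} = \tilde x_{i' + s}$, so $x_{i'} = \gamma(x)_{i'}$ as required. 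Now $x \succeq \gamma(x)$ gives $x_{k - s + 1} \geq \gamma(x)_{k - s + 1} = x_{k + 1}$, and combining with $x_{k - s + 1} = \tilde x_{k - s + 1} = \tilde x_{k + 1}$ (again using $k - s + 1 < j$) yields $x_i = x_{k + 1} \leq \tilde x_{k + 1}$. The matching lower bound is easy: if $\tilde x_{k + 1} = 0$ the inequality forces $x_{k+1} = 0$, while if $\tilde x_{k+1} = 1$ then $k + 1 > \ihat$ together with the definition of $\tilde x$ forces $k + 1 \in I_1$, so $x_{k+1} = 1$ by the fixings.

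The main step to verify carefully is that no shift by~$s$ wraps around modulo~$n$ in the argument. Every index that gets shifted by $+s$ lies in $[1, k - s + 1]$, and its image lies in $[s + 1, k + 1] \subseteq [1, n]$, where the last inclusion uses $k + 1 = i \leq n$. One also needs $k - s \geq 1$, which follows from $k \geq \ihat$ and $s = \ihat - \icheck \leq \ihat - 1$, and $k - s + 1 < j$, which follows from $k \leq j + s - 2$; both are direct consequences of the range of~$i$ in the inductive step.
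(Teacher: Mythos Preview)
Your proof is correct and follows essentially the same inductive approach as the paper: both proceed by induction with base case $i \leq \ihat$, exploit the shift $\gamma^{-1}(i') = i' + s$ together with the minimality of~$j$ to relate $x_{i-s}$, $\tilde x_{i-s}$, and $\tilde x_i$, and then use $x \succeq \gamma(x)$ combined with the definition of~$\tilde x$ (so that $\tilde x_i = 1$ forces $i \in I_1$) to close the step. Your organization is marginally tidier—you establish $x \eqp{k-s+1} \gamma(x)$ once and derive the single inequality $x_i \leq \tilde x_i$ before splitting on the value of~$\tilde x_i$, whereas the paper splits first on $x_{i-s}$—but the substance is the same.
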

\begin{claimproof}[Proof of claim]
Let~$x \in \xymretope\gamma \cap F$ with~$x_{\ihat} = 1$.
To prove the claim,
we show that~$x_i = \tilde x_i$ for all~$i \leq b$,
where~$b \leq \min\{ n, j + \ihat - \icheck - 1 \}$.
We proceed by induction on $b$.
Because~$x, \tilde x \in F$, by~$x_{\ihat} = \tilde x_{\ihat} = 1$
and the definition of~$\ihat$ we have~$x_i = \tilde x_i$
for all~$i \leq \ihat$,
so the statement holds for all $b \leq \ihat$.

Consider~$b \leq \min\{ n, j + \ihat - \icheck - 1 \}$
and~$b > \ihat$,
and suppose that for all~$i \leq b-1$
we have~$x_i = \tilde x_i$~{\small\bf(IH)}.
By the previous paragraph, this holds for $b = \ihat + 1$.
We show that~$x_b = \tilde x_b$,
so that the claim follows by induction.
If $b \in I_0 \cup I_1$, then $x_b = \tilde x_b$ holds trivially
because entry $b$ is fixed,
so in the remainder we assume~$b \notin I_0 \cup I_1$.
The following chain of equations holds
due to the arguments presented earlier:
\begin{equation}
\label{prop:monotonecyclicgroupfeasibility:proof:claim:eq}
x_{b - \ihat + \icheck}
\overset{\text{\bf(IH)}}=
\tilde x_{b - \ihat + \icheck}
\overset{{(b - \ihat + \icheck < j)}}=
\gamma(\tilde x)_{b - \ihat + \icheck}
= \tilde x_{\gamma^{-1}(b - \ihat + \icheck)}
\overset{{\bf(*)}}=
\tilde x_b.
\end{equation}
In this, ${\bf(*)}$ uses that~$\gamma$ is a cyclic shift
with~$\gamma^{-1}(\icheck) = \ihat$,
such that~$\gamma^{-1}(i) = i + \ihat - \icheck$
for~$0 < i \leq n - \ihat + \icheck$.
Observe that~$b - \ihat + \icheck \leq n - \ihat + \icheck$
due to~$b \leq n$,
and~$b - \ihat + \icheck > \icheck > 0$ due to~$b > \ihat$.

If~$x_{b - \ihat + \icheck} = 1$,
then Equation~\eqref{prop:monotonecyclicgroupfeasibility:proof:claim:eq}
yields~$\tilde x_b = 1$.
Thus,~$b \in I_1 \cup \{ \ihat \}$ by the definition of~$\tilde x$.
As~$b > \ihat$,~$b \in I_1$.
Consequently,~$x_b = \tilde x_b = 1$.

Otherwise, if~$x_{b - \ihat + \icheck} = 0$,
the induction hypothesis yields~$x \eqp{b - \ihat + \icheck} \tilde x$,
as~$\ihat > \icheck$.
Also~$b < j + \ihat - \icheck$, so~$b - \ihat + \icheck < j$.
Because~$j$ is the first entry where~$\tilde x$ and~$\gamma(\tilde x)$
differ,~$\tilde x \eqp{b - \ihat + \icheck} \gamma(\tilde x)$.
Since~$\icheck$ is the first entry where the entries of the vectors
after permuting with~$\gamma$ are not fixed,~$x, \tilde x \in F$
imply~$\gamma(x)_i = \gamma(\tilde x)_i$ for all~$i < \icheck$.
Last, if~$\icheck \leq i < b - \ihat + \icheck \leq n - \ihat + \icheck$,
the induction hypothesis yields~%
$\gamma(x)_i
= x_{i + \ihat - \icheck}
= \tilde x_{i + \ihat - \icheck}
= \gamma(\tilde x)_i$.
Combining these results yields~$x \eqp{b - \ihat + \icheck} \gamma(x)$.
Since~$x \in \xymretope\gamma$, this means that~$x \succeq \gamma(x)$,
and thus from~$x \eqp{b - \ihat + \icheck} \gamma(x)$
and~$x_{b - \ihat + \icheck} = 0$
follows~$\gamma(x)_{b - \ihat + \icheck} = x_b = 0$.
Concluding,
Equation~\eqref{prop:monotonecyclicgroupfeasibility:proof:claim:eq}
yields~$0
= x_b
= \gamma(x)_{b - \ihat + \icheck}
= x_{b - \ihat + \icheck}
\stackrel{\eqref{prop:monotonecyclicgroupfeasibility:proof:claim:eq}}=
\tilde x_b$.
In each case~$x_b = \tilde x_b$ is found,
so by induction the claim follows.
\end{claimproof}

Recall that $j \neq \ihat > \ihat$,
and that $\gamma^{-1}(\icheck) = \ihat$.
As completeness of the fixing for permutation~$\gamma$ is assumed,
and~$\ihat \notin I_0 \cup I_1$,
there always exists a vector~$x \in \xymretope\gamma \cap F$
with~$x_{\ihat} = 1$.
If the minimum in the claim evaluates to~$n$, then~$x = \tilde x$,
so~$\tilde x \in \xymretope\gamma \cap F$,
contradicting the assumption that~$\tilde x \prec \gamma(\tilde x)$.
Otherwise, if the minimum evaluates to~$j + \ihat - \icheck - 1$,
then~$x \eqp{j + \ihat - \icheck} \tilde x$,
so especially $x_j = \tilde x_j = 0$, as a result of $\ihat > \icheck$.
Moreover,
because~$\gamma$ is a cyclic shift with~$\gamma^{-1}(\icheck) = \ihat$,~%
$\gamma(x) \eqp{j} \gamma(\tilde x)$.
Also,~$\gamma(\tilde x)_j = 1$,
so~$\gamma^{-1}(j) \in I_1 \cup \{ \ihat \}$.
From~$\tilde x_j = 0 \neq \tilde x_{\ihat} = 1$
follows~$\gamma^{-1}(j) \neq \ihat$, so~$\gamma^{-1}(j) \in I_1$.
It follows
from~$\tilde x \eqp{j} \gamma(\tilde x)$ and~$\ihat > \icheck$
that~%
$x \eqp{j} \tilde x \eqp{j} \gamma(\tilde x) \eqp{j} \gamma(x)$,
$x_j = 0$ and $\gamma(x)_j = 1$,
such that~$x \prec \gamma(x)$. This contradicts~$x \in \xymretope\gamma$.
So every case with $\tilde x \prec \gamma(\tilde x)$ yields a
contradiction, proving that~$\tilde x \in \xymretope\gamma$.
\end{proof}

\subsection{Cyclic Group Generated by Ordered and Monotone Subcycles}
In this
section, a complete propagation algorithm for lexicographic leaders of the
$\Gamma$-orbit is presented, where $\Gamma$ is generated by a composition
$\tilde\gamma = \zeta_1 \circ \dots \circ \zeta_m$
of \emph{monotone} and \emph{ordered} subcycles,
possibly of different lengths.
That is, for each~$i \in [m]$, the subcycle~$\zeta_i$ is monotone, so there is
only a single entry~$k$ with~$\zeta_i(k) < k$.
Also, the subcycles are \emph{ordered}, which means that
for each~$i, j \in [m]$ with $i < j$, all entries in the support of
$\zeta_i$ are smaller than the entries in the support of $\zeta_j$.
If $\tilde\gamma$ has no fixed points, one can write
$\tilde\gamma = (1,\dots,z_1)(z_1+1,\dots,z_2)\cdots(z_{m-1}+1,\dots,z_m)$
for some $1 < z_1 < \cdots < z_m = n$.
Since this case allows to deal with subcycles of different length, this
case can handle more types of cyclic groups than the variant of the
previous section.
To derive an efficient algorithm for such groups, we need a stronger
version of Proposition~\ref{prop:monotonecyclicgroupfeasibility},
which we discuss next.
Afterwards, we proceed with the discussion of the algorithm.

\begin{proposition}
\label{prop:monotonecyclicgroupfeasibilitystrict}
Let $I_0, I_1 \subsetneq [n]$
with $I_0 \cup I_1 \subsetneq [n]$
be disjoint index sets defining a complete
set of fixings for every permutation in
group~$\Gamma \leq \gen{(1, \dots, n)}$.
If $\xymretope\Gamma \cap F(I_0, I_1)$ is non-empty,
then there exists $x \in F(I_0, I_1)$
such that $x \succ \gamma(x)$ for all
$\gamma \in \Gamma \setminus \{ \id \}$.
\end{proposition}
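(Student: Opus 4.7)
My plan is to reuse the vector $\tilde x$ introduced in the proof of Proposition~\ref{prop:monotonecyclicgroupfeasibility}, namely $\tilde x_i = 1$ iff $i \in I_1 \cup \{\ihat\}$. That proof shows $\tilde x \in \xymretope\Gamma \cap F(I_0, I_1)$, hence $\tilde x \succeq \gamma(\tilde x)$ for every $\gamma \in \Gamma$. Upgrading $\succeq$ to strict $\succ$ for every $\gamma \ne \id$ amounts to finding some $x \in \xymretope\Gamma \cap F(I_0, I_1)$ whose stabilizer $\{\gamma \in \Gamma : \gamma(x) = x\}$ is trivial, because $x \succeq \gamma(x)$ together with $x \ne \gamma(x)$ automatically yields $x \succ \gamma(x)$. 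If $\tilde x$ already has trivial stabilizer in $\Gamma$, set $x := \tilde x$; otherwise pick a non-identity $\gamma_0 \in \Gamma$ fixing $\tilde x$, so that $\gamma_0$ permutes $I_1 \cup \{\ihat\}$ setwise and, being a non-identity power of $(1, \ldots, n)$, acts on $[n]$ without fixed points, in particular $\gamma_0(\ihat) \in I_1$.

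The construction of $x$ then splits on whether $U := [n] \setminus (I_0 \cup I_1 \cup \{\ihat\})$ is empty. If $U \ne \emptyset$, I take $u := \min U$ and let $x$ be obtained from $\tilde x$ by flipping its $u$-entry from $0$ to $1$. Triviality of the stabilizer of $x$ follows from fixed-point-freeness: a non-identity $\gamma$ fixing $x$ permutes the $1$-positions of $x$, and the uniqueness of $u$ as a $1$-position of $x$ not present in $\tilde x$ forces $\gamma(u) = u$, a contradiction. To verify $x \in \xymretope\Gamma$, I would compare $x$ with $\gamma(x)$ position by position. For $\gamma$ not fixing $\tilde x$, the strict inequality $\tilde x \succ \gamma(\tilde x)$ at the first differing position transfers to $x$ since the only perturbations of $\tilde x$ and $\gamma(\tilde x)$ occur at $u$ and $\gamma(u)$ respectively. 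For $\gamma$ fixing $\tilde x$, the $\gamma$-invariance of $[n] \setminus (I_1 \cup \{\ihat\}) = I_0 \cup U$ yields $\gamma(u) \in I_0 \cup U$; the minimality of $u$ in $U$ excludes $\gamma(u) \in U$ with $\gamma(u) < u$, so either $\gamma(u) > u$ (whence the first difference is at position $u$, with $x_u = 1 > 0 = \gamma(x)_u$), or $\gamma(u) \in I_0$ with $\gamma(u) < u$, which I would rule out via Remark~\ref{remark:completenessimplication} by propagating back a contradictory fixing forcing $u \in I_0$.

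If $U = \emptyset$, then $F(I_0, I_1)$ consists only of $\tilde x$ and the vector $\tilde x'$ obtained by flipping the $\ihat$-entry of $\tilde x$ to $0$, and I set $x := \tilde x'$. Remark~\ref{remark:completenessimplication} applied at $k = \ihat$ to $\gamma_0$ first forces $\gamma_0(\ihat) < \ihat$: because $U = \emptyset$ gives that $\gamma_0$ preserves $I_0$ setwise, the hypothesis of the remark holds at every $i < \ihat$ except possibly when $\gamma_0(\ihat) = i$, so the alternative $\gamma_0(\ihat) > \ihat$ would make the remark applicable and force $\ihat \in I_1$, contradicting that $\ihat$ is unfixed. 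With $\gamma_0(\ihat) \in I_1$ and $\gamma_0(\ihat) < \ihat$ in hand, a position-wise comparison yields $\tilde x' \succ \gamma(\tilde x')$ at position $\gamma(\ihat) \in I_1$ for each $\gamma$ fixing $\tilde x$, while for $\gamma$ not fixing $\tilde x$ the strict inequality transfers from $\tilde x \succ \gamma(\tilde x)$; triviality of the stabilizer of $\tilde x'$ in $\Gamma$ follows from the same Remark-based argument applied to any hypothetical $\gamma'$ satisfying $\gamma'(I_1) = I_1$.

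The main obstacle I anticipate is in the first case, specifically ruling out the subcase where $\gamma$ fixes $\tilde x$, $\gamma(u) \in I_0$, and $\gamma(u) < u$. Excluding this subcase demands combining the minimality of $u$ in $U$ (which prevents $\gamma(u) \in U \setminus \{u\}$ from being below $u$), the setwise $\gamma$-invariance of $I_0 \cup U$ coming from $\gamma$ fixing $\tilde x$, and a delicate chain of applications of Remark~\ref{remark:completenessimplication} propagating a contradictory fixing back to $u$.
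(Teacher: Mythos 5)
Your overall strategy (exhibit a point of $\xymretope\Gamma \cap F(I_0,I_1)$ with trivial $\Gamma$-stabilizer) is sound in principle, but the construction is not carried through, and the missing steps are exactly the hard part. In the case $U \neq \emptyset$ you flip $\tilde x_u$ from $0$ to $1$ and claim the comparison $x \succeq \gamma(x)$ ``transfers'' from $\tilde x \succeq \gamma(\tilde x)$ because the perturbations sit only at $u$ and $\gamma(u)$. That transfer is not automatic: raising $x_u$ also raises $\gamma(x)_{\gamma(u)}$, and if $\gamma(u)$ precedes both $u$ and the first position where $\tilde x$ and $\gamma(\tilde x)$ differ (which happens whenever $\gamma(u) \in I_0$ is small, since $\gamma(u)\in U$ with $\gamma(u)<u$ is excluded by minimality but $\gamma(u)\in I_0$ is not), then $x \eqp{\gamma(u)} \gamma(x)$ and $x_{\gamma(u)} = 0 < 1 = \gamma(x)_{\gamma(u)}$, i.e.\ $x \prec \gamma(x)$. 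So the problematic subcase you flag for stabilizing $\gamma$ (namely $\gamma(u)\in I_0$, $\gamma(u)<u$) in fact also infects the non-stabilizing case, and you give no argument that completeness rules it out — you explicitly leave it as an anticipated obstacle. In addition, the stabilizer-triviality claims are unsubstantiated: from $\gamma(x)=x$ you only get that $\gamma$ permutes $\supp(x) = I_1\cup\{\ihat,u\}$; nothing forces $\gamma(u)=u$, so the ``uniqueness of $u$ as a new $1$-position'' argument is a non sequitur. Likewise, in the case $U=\emptyset$, Remark~\ref{remark:completenessimplication} does exclude a stabilizing $\gamma$ of $\tilde x'$ with $\gamma(\ihat)<\ihat$, but it gives nothing when $\gamma$ preserves $I_1$ and $\gamma(\ihat)>\ihat$ (then $\gamma^{-1}(\ihat)\in I_0$ and $\ihat$ is unfixed, so neither implication of the remark applies), so triviality of the stabilizer of $\tilde x'$ is not established either.

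For comparison, the paper avoids re-verifying membership for a perturbed vector altogether: it splits on whether $\ihat \le \tfrac12 n$ or $\ihat > \tfrac12 n$. If $\ihat \le \tfrac12 n$ it keeps the vector $\tilde x$ with support $I_1\cup\{\ihat\}$ (whose membership in $\xymretope\Gamma$ is already known from Proposition~\ref{prop:monotonecyclicgroupfeasibility}) and shows its stabilizer is trivial by observing that a stabilizing $\gamma$ would force $\orbit{\ihat}{\gamma}\subseteq I_1\cup\{\ihat\}$, while some power $\delta$ of $\gamma$ satisfies $\delta(\ihat) > \tfrac12 n \ge \ihat$, which contradicts completeness. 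If $\ihat > \tfrac12 n$ it uses the vector with support $I_1$ (without $\ihat$) and exploits that the $\ihat-1$ consecutive fixed positions form more than half the cycle to pin down $\gamma^{-1}$ explicitly and derive a contradiction with completeness. Some variant of this quantitative control over where the orbit of $\ihat$ can land (or an equivalent substitute) is what your perturbation argument is missing; without it, the construction does not close.
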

\begin{proof}

Let $\ihat \define \min \{i \in [n] : i \notin I_0 \cup I_1 \}$ be the
smallest unfixed entry.
We provide two constructions of a vector
$\tilde x \in \xymretope\Gamma \cap F(I_0, I_1)$
with $\tilde x \succ \gamma(\tilde x)$:
one for $\ihat \leq \frac12 n$, and one for~$\ihat > \frac12 n$.

\begin{claim}
If~$\ihat > \frac12 n$,
then~$\tilde x \in F(I_0, I_1)$
with~$\tilde x_i = 1$ if~$i \in I_1$ and~$\tilde x_i = 0$ if~$i \notin I_1$
satisfies~$\tilde x \succ \gamma(\tilde x)$
for all~$\gamma \in \Gamma \setminus \{ \id \}$.
\end{claim}
\begin{claimproof}
We prove the claim by contradiction in two parts.
First, suppose~$\tilde x = \gamma(\tilde x)$ for some
$\gamma \in \Gamma \setminus \{ \id \}$.
Then, $\tilde x = \delta(\tilde x)$ for all~$\delta \in \gen\gamma$, and
since~$\gamma \neq \id$ is a cyclic shift, there exists~$\delta \in
\gen\gamma$ with~$\delta(\ihat) \leq \frac12 n < \ihat$.
For this reason, we may assume w.l.o.g.\ that
$\gamma(\ihat) \leq \frac12 n$.
From this and
$\tilde x_{\gamma(\ihat)}
= \gamma(\tilde x)_{\gamma(\ihat)}
= \tilde x_{\ihat} = 0$,
we conclude~$\gamma(\ihat) \in I_0$.
As permutation~$\gamma$ is a cyclic shift,
$\gamma^{-1}(i) = i + \ihat - \gamma(\ihat) \pmod{n}$,
wherein the modular residual classes identify with~$\{1, \dots, n\}$.
From~$1 \leq \gamma(\ihat) \leq \frac12 n < \ihat \leq n$,
we have that~$1 \leq \gamma^{-1}(i) < \ihat$
for all~$1 \leq i < \gamma(\ihat)$,
showing that~$i, \gamma^{-1}(i) \in I_0 \cup I_1$
for all~$i < \gamma(\ihat)$.
By the assertion~$\tilde x = \gamma(\tilde x)$,
all vectors~$x \in F(I_0, I_1)$
thus have~$x \eqp{\gamma(\ihat)} \gamma(x)$,
and~$x_{\gamma(\ihat)} = 0$ by~$\gamma(\ihat) \in I_0$.
But~$\ihat \notin I_0 \cup I_1$,
and any~$x \in F(I_0, I_1)$ with~$x \succeq \gamma(x)$
must thus have~
$0 = x_{\gamma(\ihat)} \geq \gamma(x)_{\gamma(\ihat)} = x_{\ihat}$,
such that $x_{\ihat} = 0$, which violates
completeness for permutation~$\gamma$.
Hence~$\tilde x \neq \gamma(\tilde x)$.

Second, suppose~$\tilde x \prec \gamma(\tilde x)$
for some~$\gamma \in \Gamma \setminus \{ \id \}$.
Let~\mbox{$j \define \min\{ i \in [n] : \tilde x_i \neq \gamma(\tilde x)_i \}$}
and let
$\icheck \define \min\{ i \in [n] : \gamma^{-1}(i) \notin I_0 \cup I_1 \}$
be the first non-fixed entry in a vector permuted by~$\gamma$.
We derive some properties of such vectors~$x \in F(I_0, I_1)$.
Every vector~$x \in F(I_0, I_1)$ satisfies~$x_i = \tilde x_i$ for~$i < \ihat$
and~$\gamma(x)_i = \gamma(\tilde x)_i$ for~$i < \icheck$.
Moreover, $0 = \tilde x_j < \gamma(\tilde x)_j = 1$,
showing~$j \notin I_1$ and~$\gamma^{-1}(j) \in I_1$.
As~$\xymretope\Gamma \cap F(I_0, I_1)$ is non-empty
and~$\tilde x \prec \gamma(\tilde x)$,~%
$j \geq \min\{ \ihat, \icheck \}$.
Thus, $x \eqp{\min\{ \ihat, \icheck \}} \gamma(x)$
for all~$x \in F(I_0, I_1)$.
We distinguish three cases.

On the one hand, suppose~$\icheck < \min\{\ihat, j\}$.
Then there is~$x \in F(I_0, I_1)$
with~$\gamma(x)_{\icheck} = 1$ and~$x \succeq \gamma(x)$.
But~$\icheck < \ihat$ implies~$x_{\icheck}
= \tilde x_{\icheck}
= \gamma(\tilde x)_{\icheck} = 0$
and thus $\icheck \in I_0$.
So all vectors $x \in F(I_0, I_1)$ have $x \eqp{\icheck} \gamma(x)$,
and $x_{\icheck} = 0$,
but if $\gamma(x)_{\icheck} = 1$, then $x \prec \gamma(x)$.
To ensure $x \succeq \gamma(x)$, entry $\gamma^{-1}(\icheck)$ must
be fixed to zero as well,
which is in contradiction with completeness of the fixings
and~$\gamma^{-1}(\icheck) \notin I_0 \cup I_1$.
Hence~$\icheck \geq \min\{ \ihat, j \}$,
and we conclude from~$j \geq \min\{ \ihat, \icheck \}$
that~$j \geq \ihat$ and~$\icheck \geq \ihat$.

On the other hand, if~$j = \ihat$, then~$\gamma(\tilde x)_{\ihat} = 1$,
i.e., $\gamma^{-1}(\ihat) \in I_1$,
Then, completeness yields~$\ihat \in I_0$, which is a contradiction.
Consequently, $j > \ihat$.
Since~$\ihat > \frac12 n$, there is exactly one sequence of~$\ihat - 1$
consecutive numbers modulo~$n$ in~$I_0 \cup I_1$ followed by a
non-contained number. This sequence is~$1, \dots, \ihat$.
In the vector permuted by~$\gamma$, this sequence is mapped to~$1 +
(\icheck - \ihat), \dots, \ihat + (\icheck - \ihat)$, because~$\gamma$ is a
cyclic shift, $\icheck$ is the first non-fixed entry
in the vector permuted by $\gamma$, and~$\icheck \geq \ihat > \frac12n$.
Hence,~$\gamma^{-1}(\icheck) = \ihat$,
and thus~$\gamma^{-1}(i) = i - \icheck + \ihat \pmod{n}$.
Then~$\icheck > \ihat$ follows from~$\gamma \neq \id$,
so~$\icheck > \ihat$ and~$j > \ihat$.

Because~$j > \ihat$ and~$\icheck > \ihat$,
any vector~$x \in F(I_0, I_1)$
has~$x \eqp{\ihat} \gamma(x)$.
From this we conclude~$\gamma^{-1}(\ihat) \notin I_1$, because otherwise,
completeness of~$I_1$ implied~$\ihat \in I_1$ to ensure~$x
\succeqp{\ihat+1} \gamma(x)$, contradicting the definition of~$\ihat$.
Thus, $\gamma^{-1}(\ihat) \in I_0$ because~$\icheck > \ihat$.
However, this leads to a contradiction with completeness
of permutation~$\gamma^{-1} \in \Gamma$, as we show in the remainder of the
proof.

The previously derived explicit formula for~$\gamma^{-1}(i)$ implies
$\gamma(i) = i + \icheck - \ihat \pmod{n}$.
As~$\frac12 n < \ihat < \icheck < n$,
the first unfixed entry in the vector permuted by~$\gamma^{-1}$
is~$\gamma^{-1}(\ihat) = 2\ihat - \icheck$.
Applying permutation~$\gamma^{-1}$ to any vector,
permutes the entries~$1 + (\icheck - \ihat), \dots, \ihat + (\icheck - \ihat)$
to~$1, \dots, \ihat$.
Because for any vector~$x \in F(I_0, I_1)$
we have~$x \eqp{\ihat} \gamma(x)$, we can apply~$\gamma^{-1}$
on either side
to find~$\gamma^{-1}(x) \eqp{\gamma^{-1}(\ihat)} \gamma^{-1}(\gamma(x))$,
so~$x \eqp{\gamma^{-1}(\ihat)} \gamma^{-1}(x)$.
By the previous paragraph,~$\gamma^{-1}(\ihat) \in I_0$,
so any~$x \in F(I_0, I_1)$
has~$x \eqp{\gamma^{-1}(\ihat)} \gamma^{-1}(x)$
and~$x_{\gamma^{-1}(\ihat)} = 0$.
As the set of fixings is complete for~$x \succeq \gamma^{-1}(x)$,
all~$x \in F(I_0, I_1)$ have~%
$\gamma^{-1}(x)_{\gamma^{-1}(\ihat)} = x_{\ihat} = 0$,
so~$\ihat \in I_0$. This contradicts that~$\ihat \notin I_0 \cup I_1$.

A contradiction is found for~$\tilde x \preceq \gamma(\tilde x)$
for all~$\gamma \in \Gamma \setminus \{ \id \}$,
so~$\tilde x \succ \gamma(\tilde x)$.
\end{claimproof}

\begin{claim}
If~$\ihat \leq \frac12 n$,
then~$\tilde x \in F(I_0, I_1)$
with~$\tilde x_i = 1$ if~$i \in I_1 \cup \{ \ihat \}$
and~$\tilde x_i = 0$ if~$i \notin I_1 \cup \{ \ihat \}$
satisfies~$\tilde x \succ \gamma(\tilde x)$
for all~$\gamma \in \Gamma \setminus \{ \id \}$.
\end{claim}

\begin{claimproof}
The proof of Proposition~\ref{prop:monotonecyclicgroupfeasibility}
shows that~$\tilde x \succeq \gamma(\tilde x)$ for all~$\gamma \in \Gamma$.
To show~$\tilde x \succ \gamma(\tilde x)$ if~$\ihat < \frac12n$,
for the sake of contradiction
suppose that~$\tilde x = \gamma(\tilde x)$
for some~$\gamma \in \Gamma \setminus \{ \id \}$.
Then especially~$\tilde x = \delta(\tilde x)$
for all~$\delta \in \gen\gamma$.
Since~$\tilde x_{\ihat} = 1$,
for all~$\delta \in \gen\gamma$ we have~$\delta(\tilde x)_{\ihat} = 1$,
such that
$\orbit{\ihat}{\gamma} \subseteq I_1 \cup \{ \ihat \}$.

Let~$\delta \in \gen\gamma \setminus \{ \id \}$ and~%
$\icheck_\delta
\define \min\{ i \in [n] : \delta^{-1}(i) \notin I_0 \cup I_1 \}$.
If~$\ihat \leq \icheck_\delta$,
then~$x \eqp{\ihat} \delta(x)$
for all~$x \in F(I_0, I_1)$.
Moreover,~$\delta(x)_{\ihat} = 1$
follows from
$\orbit{\ihat}{\gamma} \subseteq I_1 \cup \{ \ihat \}$
and that~$\delta \neq \id$ is cyclic,
such that~$\delta^{-1}(\ihat) \neq \ihat$.
But then completeness of the fixings for permutation~$\delta$
implies that~$\ihat \in I_1$,
which violates the definition of~$\ihat$.
Hence,~$\ihat > \icheck_\delta$
for all~$\delta \in \gen\gamma \setminus \{ \id \}$.

As~$\icheck_\delta < \ihat$, we have~$\icheck_\delta \in I_0 \cup I_1$,
and~$x \eqp{\icheck_\delta} \delta(x)$ for all~$x \in F(I_0, I_1)$.
If~$\icheck_\delta \in I_0$,
then completeness of the fixings for permutation~$\delta$
yields~$\delta^{-1}(\icheck_\delta) \in I_0$,
violating the definition of~$\icheck_\delta$.
Thus,~$\icheck_\delta \in I_1$,
and as such from
$1 = {\tilde x}_{\icheck_\delta}
= \delta(\tilde x)_{\icheck_\delta}$
follows~$\delta^{-1}(\icheck_\delta) \in I_1 \cup \{ \ihat \}$.
Again, by the definition of
$\icheck_\delta$,~$\delta^{-1}({\icheck}_\delta) \notin I_0 \cup I_1$,
so~$\delta^{-1}(\icheck_\delta) = \ihat$.
Hence,~$\icheck_\delta = \delta(\ihat)$.
As~$\gamma \neq \id$ is a cyclic shift and~$\ihat \leq \frac12 n$,
there exists a~$\delta \in \gen\gamma \setminus \{ \id \}$
with~$\icheck_\delta = \delta(\ihat) > \frac12n \geq \ihat$.
This violates that~$\ihat > \icheck_\delta$ for all
$\delta \in \gen\gamma \setminus \{ \id \}$,
so a contradiction follows.
\end{claimproof}
\vspace{-1em}
\qedhere
\end{proof}

In the remainder of this section,
let~$\Gamma \leq \gen{\tilde\gamma}$,
where $\tilde\gamma$ is a composition of $m$ disjoint, ordered and
monotone cycles $\zeta_1, \dots, \zeta_m$,
with $N_i \define \supp(\zeta_i)
\define \{ k \in [n] : \zeta_i(k) \neq k \}$
for all $i \in [m]$,
and with~%
$\supp(\tilde\gamma) = [n] = N_1 \dot\cup \cdots \dot\cup N_m$.

\begin{algorithm}[!tbp]
\caption{\texttt{propagate}($\Gamma$, $I_0$, $I_1$, $k$, computeFixings)}
\label{alg:completemonotoneorderedcyclic}
\SetKwInOut{Input}{input}
\SetKwInOut{Output}{output}
\Input{%
group~$\Gamma \leq \gen{\tilde\gamma}$ for monotone and
ordered~$\tilde{\gamma} = \zeta_1 \circ \dots \circ \zeta_m \in \sym{n}$,
sets~$I_0, I_1 \subseteq [n]$, $k \in [m]$, Boolean variable
``computeFixings''
}
\Output{%
the message \Infeasible or \Feasible, and, if ``computeFixings'' is true,
also two subsets of~$[n]$
}

$\Delta_k \gets \Gamma$, $(J_0, J_1) \gets (I_0, I_1)$\;

\For{$c \gets k, \dots, m$\label{alg:propMainForLoop}}
{
  $(J_0, J_1) \gets$ complete set of fixings
  of~$\xymresack{\gamma} \cap F(J_0, J_1)$
  for every~$\gamma \in \restr(\Delta_c, N_c)$\;

  \If{$\xymretope{\restr(\Delta_c, N_c)} \cap F(J_0 \cap N_c, J_1 \cap
  N_c) = \emptyset$}
  {
    \Return \Infeasible\;\label{alg:propInf}
  }
  \If{computeFixings\label{alg:beginpeek}}
  {
    $(J_0', J_1') \gets (J_0, J_1)$\;
    \For{$i \in N_c \setminus
      (J_0' \cup J_1')$}
    {
      \If{\texttt{propagate}($\Delta_c$,
        $J_0 \cup \{i\}$, $J_1$, $c$, false)
        is \Infeasible}
      {
        $(J_0, J_1) \gets (J_0, J_1 \cup \{ i \})$\;
      }
      \ElseIf{\texttt{propagate}($\Delta_c$,
        $J_0$, $J_1 \cup \{i\}$, $c$, false) is \Infeasible}
      {
        $(J_0, J_1) \gets (J_0 \cup \{ i \}, J_1)$\label{alg:endpeek}\;
      }
    }
  }
  \If{$N_c \setminus (J_0 \cup J_1) = \emptyset$}
  {
    $\Delta_{c + 1} \gets \stab{J_0 \cap N_c}{\Delta_c}
    \cap \stab{J_1 \cap N_c}{\Delta_c}$\;
  }
  \Else
  {
    $\Delta_{c + 1} \gets \STAB{N_c}{\Delta_c}$\;
  }
}
\Return \Feasible, and~$(J_0, J_1)$ if computeFixings is true\;
\end{algorithm}

Algorithm~\ref{alg:completemonotoneorderedcyclic} shows a complete
propagation algorithm for the lexicographic leaders of the~$\Gamma$-orbit
with a given set of initial fixings.
It describes function~%
\texttt{propagate}($\Gamma$, $I_0$, $I_1$, $k$, $\text{computeFixings}$).
When choosing $k = 1$, this determines
whether~$\xymretope\Gamma \cap F(I_0, I_1)$ is empty (i.e., \Infeasible)
or not (i.e., \Feasible).
If the Boolean parameter $\text{computeFixings}$ is \True,
then it additionally computes the complete set of fixings of~%
$\xymretope\Gamma \cap F(I_0, I_1)$.

When running the algorithm with $\text{computeFixings}$ set to \False,
the worst-case running time is determined by determining the
complete set of fixings of $\xymretope\gamma \cap F(I_0^t, I_1^t)$
for each~%
$\gamma \in \hat\Delta_c \define \restr(\Delta_c, N_c) \leq \gen{\zeta_c}$.
As $\zeta_c$ is a monotone cycle,
Algorithm~\ref{alg:propagateallindividualcons}
of Section~\ref{sec:completepermset}
can be used.
Hence, the running time of this variant of the algorithm is~%
$
f(\Gamma) \define
\bigo\left(
\sum_{c \in [m]} |N_c| \ord(\hat\Delta_c)
\right)
\subseteq
\bigo\left(
n \max\{ |N_c| : c \in [m] \}
\right)
\subseteq
\bigo(n^2)
$,
as $\ord(\hat\Delta_c) \leq \ord(\gen{\zeta_c}) = |N_c|$.
In the case where the complete set of fixings is to be determined,
one additionally peeks for feasibility by calling the function recursively,
with $\text{computeFixings}$ set to \False.
That happens at most as many times as the number of unfixed entries in the
cycle, that is $|N_c|$ times for all $c \in [m]$.
The total running time is therefore
$
\bigo\left(
\sum_{c \in [m]} \left(
|N_c| \ord(\hat\Delta_c) + |N_c| f(\Gamma)
\right)
\right)
\subseteq
\bigo\left(
n^2 \max\{ |N_c| : c \in [m] \}
\right)
\subseteq \bigo(n^3)
$.

In this remainder of this section, we prove that
Algorithm~\ref{alg:completemonotoneorderedcyclic} correctly detects
infeasibility, or finds the complete set of fixings.
\begin{proposition}
  Let~$\tilde{\gamma} = \zeta_1 \circ \dots \circ \zeta_m \in \sym{n}$ be a
  monotone and ordered permutation, let~$\Gamma \leq \gen{\tilde{\gamma}}$,
  and let~$I_0, I_1 \subseteq [n]$.
  Then, \texttt{propagate}($\Gamma$, $I_0$, $I_1$, 1, true) returns
  \Feasible if and only if~$\xymretope\Gamma \cap F(I_0, I_1) \neq
  \emptyset$.
  Moreover, the returned sets~$J_0$ and~$J_1$ define a complete set of
  fixings.
\end{proposition}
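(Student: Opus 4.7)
The plan is to prove both the feasibility equivalence and the completeness of the returned fixings by induction on $m - k + 1$, where $k$ is the cycle-index argument passed to \texttt{propagate}. The base case $k = m+1$ is vacuous since the outer for-loop does not execute. For the inductive step, I would analyze the first iteration of the outer loop (for~$c = k$) in detail, using Propositions~\ref{prop:monotonecyclicgroupfeasibility} and~\ref{prop:monotonecyclicgroupfeasibilitystrict} to control the interaction between the current cycle~$N_k$ and the group~$\restr(\Delta_k, N_k) \leq \gen{\zeta_k}$ acting on it, and invoking the inductive hypothesis on the subsequent iterations (which is equivalent to a recursive call on~$\Delta_{k+1}$ with $c = k+1$).

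For the feasibility direction, the ``only if'' part is the easy one: if the algorithm returns \Infeasible at Line~\ref{alg:propInf}, then $\xymretope{\restr(\Delta_c, N_c)} \cap F(J_0 \cap N_c, J_1 \cap N_c) = \emptyset$, and by Proposition~\ref{prop:monotonecyclicgroupfeasibility} applied to the monotone cycle~$\zeta_c$ together with the completeness of fixings produced in the preceding step, no restriction to~$N_c$ can satisfy the orbit-lexicographic constraints, hence $\xymretope{\Gamma} \cap F(I_0, I_1) = \emptyset$. For the ``if'' direction I would build a witness~$\tilde x$ cycle by cycle: for each~$c$, Proposition~\ref{prop:monotonecyclicgroupfeasibilitystrict} applied to $\restr(\Delta_c, N_c)$ gives an assignment on~$N_c$ with $\tilde x|_{N_c} \succ \delta(\tilde x|_{N_c})$ for every non-identity $\delta \in \restr(\Delta_c, N_c)$ (unless all of~$N_c$ is forced, in which case the unique feasible restriction is used). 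The ordered structure guarantees entries of $N_c$ precede those of~$N_{c+1}, \ldots, N_m$, so strict inequality on~$N_c$ already witnesses $\tilde x \succ \gamma(\tilde x)$ for every~$\gamma \in \Delta_c \setminus \STAB{N_c}{\Delta_c}$, regardless of values assigned later. Only elements of $\STAB{N_c}{\Delta_c}$ (or the finer setwise stabilizer in the fully-fixed case) constrain the remaining coordinates, and these are exactly collected in~$\Delta_{c+1}$, so the inductive construction extends~$\tilde x$ consistently.

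For the completeness part, the peek-loop of Lines~\ref{alg:beginpeek}--\ref{alg:endpeek} explicitly tests, for every unfixed~$i \in N_c$, whether fixing~$i$ to~$0$ (resp.~$1$) renders the problem infeasible by a recursive call with computeFixings set to \False. By the inductive hypothesis on the recursion, these calls correctly decide feasibility, so exactly the necessary $N_c$-fixings are identified. It remains to argue that later cycles cannot propagate fixings backward into~$N_c$: any non-identity~$\gamma \in \Delta_c$ has its lexicographic comparison with~$x$ resolved either strictly within~$N_c$ (if $\gamma \notin \STAB{N_c}{\Delta_c}$, by the witness from Proposition~\ref{prop:monotonecyclicgroupfeasibilitystrict}) or purely on coordinates outside~$N_c$ (if $\gamma \in \STAB{N_c}{\Delta_c}$, since then $\gamma$ acts as the identity on~$N_c$). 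Hence no~$N_c$-fixing can be forced by constraints that only activate later, and a single forward pass over~$c = 1, \ldots, m$ suffices.

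The main obstacle I anticipate is justifying the stabilizer updates at the end of each iteration and, in particular, showing that the algebraic bookkeeping $\Delta_{c+1} \gets \STAB{N_c}{\Delta_c}$ (or the finer setwise stabilizer) captures precisely the subgroup whose lexicographic constraints on~$[n] \setminus N_c$ remain unsatisfied by the partial witness built so far. This requires combining the disjointness and orderedness of the subcycles (so that elements of $\gen{\tilde\gamma}$ decompose coherently across the~$N_c$'s) with the strict-inequality guarantee of Proposition~\ref{prop:monotonecyclicgroupfeasibilitystrict}, which is exactly what allows one to discard~$\Delta_c \setminus \STAB{N_c}{\Delta_c}$ after cycle~$c$ without missing any feasibility or fixing obligation.
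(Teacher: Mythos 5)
Your plan follows essentially the same route as the paper's proof: an induction over the cycles with the invariant that the current fixings are complete on the already-processed cycles and that $\Delta_c$ is the intersection of the appropriate setwise/pointwise stabilizers, using Proposition~\ref{prop:monotonecyclicgroupfeasibilitystrict} both to discard permutations outside $\Delta_c$ and to assemble a cycle-by-cycle strict witness, Proposition~\ref{prop:monotonecyclicgroupfeasibility} to realize the emptiness checks, and the peek calls (whose correctness comes from the recursive feasibility test) for completeness on the current cycle. The ``main obstacle'' you flag --- justifying the stabilizer update $\Delta_{c+1}$ --- is resolved in the paper exactly by the tools you name, so no essential idea is missing.
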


\begin{proof}
  We claim that, at the beginning of iteration~$c$ of the for-loop in
  Line~\ref{alg:propMainForLoop}, $(J_0, J_1)$ defines a complete set of
  fixings on the first~$c-1$ cycles of~$\tilde{\gamma}$.
  Moreover, we claim that~$\Delta_c$ is the intersection of all setwise
  stabilizers~$\stab{J_0 \cap N_k}{\Gamma} \cap \stab{J_1 \cap N_k}{\Gamma}$ for
  cycles~$k \in [c-1]$ with~$N_k \subseteq J_0 \cup J_1$ and the pointwise
  stabilizers of all remaining cycles~$k \in [c-1]$.
  The claim holds true for~$c = 1$, so assume we have shown the claim for
  the first~$c \in [m]$ iterations.
  We show that at the end of iteration~$c$, $(J_0, J_1)$ is complete on the
  first~$c$ cycles of~$\tilde{\gamma}$ and~$\Delta_{c + 1}$
  has the described properties.
  The assertion follows then by induction.

  First, we show that we cannot derive further variable fixings from
  any~$\gamma \in \Gamma \setminus \Delta_c$.
  For~$x \in \R^n$, let~$x^c$ be the restriction of~$x$
  onto~$N_c$.
  Let~$\gamma \in \Gamma \setminus \Delta_c$
  and let~$N_{k'}$, $k' \in [c-1]$,
  be the first cycle violating a stabilizer condition of~$\Delta_c$.
  On the one hand, if~$N_{k'} \subseteq J_0 \cup J_1$, then all variables
  in~$N_{k'}$ are fixed.
  Since all the previous cycles are stabilized, this means that, for any~$x
  \in \xymretope\Gamma \cap F(J_0, J_1)$, $x$ and~$\gamma(x)$ coincide on
  the first~$k' - 1$ cycles
  and~$x^{k'} \neq \gamma(x^{k'})$.
  As~$x \succeq \gamma(x)$, the latter implies~$x^{k'} \succ
  \gamma(x^{k'})$ and thus~$x \succ \gamma(x)$.
  Hence, no variable fixings on~$N_c$ can be derived.
  On the other hand, if~$N_{k'} \setminus (J_0 \cup J_1) \neq \emptyset$,
  Proposition~\ref{prop:monotonecyclicgroupfeasibilitystrict} implies that
  there is~$
  x \in \xymretope{\restr(\Delta_{k'}, N_{k'})}
  \cap F(J_0 \cap N_{k'}, J_1 \cap N_{k'})
  $
  such that~$x^k \succ \gamma(x^k)$ for any~$\gamma \in \Delta_{k'}$ not
  stabilizing cycle~$N_{k'}$ completely.
  Consequently, we again cannot deduce variable fixings on cycle~$N_k$ from
  such~$\gamma$,
  and it is sufficient to check only the permutations in $\Delta_c$
  to derive further variable fixings.

  Second, we show that the algorithm finds all variable fixings on
  cycle~$N_c$.
  By the above discussion, $\Delta_c$ stabilizes every~$x \in
  \xymretope\Gamma \cap
  F(J_0, J_1)$ on the first~$c-1$ cycles.
  Hence, the algorithm correctly terminates in Line~\ref{alg:propInf}
  if~$\xymretope{\Delta_c} \cap F(I_0^c, I_1^c) = \emptyset$ as any~$x \in
  F(J_0, J_1)$ then satisfies~$x^c \prec \gamma(x^c)$
  for some~$\gamma \in \Delta_c$.
  Next, the algorithm uses~\texttt{propagate} to check,
  for each~$i \in N_c \setminus (I_0^c \cup I_1^c)$
  whether~$\xymretope{\Delta_c} \cap F(J_0 \cup \{i\}, J_1)$ is empty.
  In this case, $J_1$ can be extended by~$i$.
  The same reasoning is used to find variables that can be fixed to~$0$.
  Since this step is carried out for all not yet fixed variables on
  cycle~$N_k$, this step finds all variable fixings on~$N_c$ provided that
  \texttt{propagate} is able to correctly determine
  whether~$\xymretope{\Delta} \cap F(J_0, J_1)$ is empty or not.
  Then, after computing all fixings on~$N_c$, the algorithm checks whether
  all entries on~$N_c$ are already fixed and updates the
  stabilizer~$\Delta_{c + 1}$ according to the rules described in the
  first paragraph.

  To conclude the proof, we need to show that \texttt{propagate} indeed is
  able to determine whether~$\xymretope{\Delta_c} \cap F(J_0, J_1)$ is
  empty or not.
  So, assume~$\Delta_c$ stabilizes the first~$c-1$ cycles as described
  above and that we call~%
  \texttt{propagate}($\Delta$, $J_0$, $J_1$, $k$, false).
  Then, the algorithm performs the following steps: it checks whether the
  If-statement in  Line~\ref{alg:propInf} shows infeasibility for
  cycle~$c$, updates the stabilizer according to fixings on cycle~$N_c$,
  and proceeds in the same way on cycles~$c+1, \dots, m$.
  As explained above, the algorithm correctly terminates if it reaches
  Line~\ref{alg:propInf} in one of these iterations, because~$\Delta_c$
  stabilizes the previous cycles depending on whether all of their entries
  are fixed or not.
  If the algorithm does not reach Line~\ref{alg:propInf}, we thus need to
  show that~%
  $\xymretope{\Delta_c} \cap F(J_0, J_1) \neq \emptyset$.

To this end, we construct~$x \in \xymretope{\Gamma} \cap F(J_0, J_1)$ via
subvectors~$x^c \in \B{N_c}$ specifying each cycle~$c \in [m]$.
If~$N_c \subseteq J_0 \cup J_1$, all entries on~$N_c$ are fixed and we
define~$x^c$ accordingly;
otherwise, let~$x^c \in F(J_0 \cap N_c, J_1 \cap N_c)$ be a vector that
satisfies~$x^c \succ \gamma(x^c)$
for all nontrivial~$\gamma \in \restr(\Delta_c, N_c)$,
which exists by
Proposition~\ref{prop:monotonecyclicgroupfeasibilitystrict}.
We claim~$x \succeq \gamma(x)$ for all~$\gamma \in \Gamma$.
If the claim was wrong, there would exist~$c \in [m]$
and~$\gamma \in \Gamma$
such that~$x$ and~$\perm(x)$ coincide on the first~$c-1$
cycles and~$x^c \prec \perm(x^c)$.
Since~$\perm$ stabilizes the first~$c-1$ cycles, $\perm \in \Delta_c$.
Consequently, $N_c \subseteq J_0 \cup J_1$,
because otherwise~$x^c \succ \gamma(x^c)$ for all~$\gamma \in \Delta_c$
that do not stabilize cycle $c$ pointwise
(i.e.,~$\restr(\gamma, N_c) \neq \id$),
by construction.
That is the case for $\gamma$ that we consider,
as otherwise~$x^c \prec \gamma(x^c)$ is violated.
This, however, means that~$\xymretope{\restr(\Delta_c, N_c)} \cap F(J_0,
J_1) = \emptyset$ and Algorithm~\ref{alg:completemonotoneorderedcyclic} steps
into Line~\ref{alg:propInf}.
A contradiction to our assumption.
\end{proof}

\section{Computational Results}
\label{sec:computational}

This section's aim is to investigate the practical performance of the
algorithms derived in the preceding sections.
We are particularly interested in the following questions:
\begin{enumerate}[label=\textbf{Q\arabic*}, ref=Q\arabic*]
\item\label{Q1} Given a cyclic group~$\group$,
  by how much does the running time of
  Algorithm~\ref{alg:propagateallindividualcons} improve on the time
  for propagating the constraints~$x \succeq \delta(x)$, $\delta \in
  \group$, individually?

\item\label{Q2} Algorithm~\ref{alg:completemonotoneorderedcyclic} can be
called with the Boolean parameter ``computeFixings''.
  If it is \True, all possible fixings can be found for ordered and
  monotone permutations, but this requires additional computational effort.
  Is this additional running time compensated by strong fixings that can be
  detected?
\end{enumerate}
Note that these questions are only \emph{meaningful} for cyclic groups of order
greater than~2 as otherwise there is no difference between
Algorithm~\ref{alg:propagateallindividualcons} and individual propagation.

To answer~\ref{Q1} and~\ref{Q2}, we have implemented our methods in the mixed-integer
programming framework \scip~\cite{BestuzhevaEtal2021OO}.
\scip already provides methods to compute symmetries of a mixed-integer
program (MIP),
and it can enforce~$x \succeq \perm(x)$ for a
symmetry~$\perm$ of a MIP via propagation and separation methods for
so-called symresack and orbisack constraints, see~\cite{HojnyPfetsch2019}.
If the symmetry group of a MIP is a product group and one of its factors
defines an action associated with a certain symmetric group, \scip applies
orbitopal fixing, see Kaibel and Pfetsch~\cite{KaibelEtAl2011} as well
as Bendotti et al.~\cite{BendottiEtAl2021}, to handle the action of the
entire factor.
We have extended this code by a plugin that implements the enforcement
of~$x \succeq \delta(x)$ for all~$\delta \in \gen{\gamma}$.
This plugin uses Algorithm~\ref{alg:completemonotoneorderedcyclic} if the
representation of~$\gamma$ is monotone and ordered.
Otherwise we use Algorithm~\ref{alg:propagateallindividualcons},
where~$\Pi$ consists of all non-identity group elements of~$\gen\gamma$.

\paragraph{Computational Setup}
All experiments use a developer's version of \scip~8.0.0.1 (git hash
430ca7b) as mixed-integer programming framework and
\solver{SoPlex}~5.0.1.3 as LP solver.
\scip detects symmetries of a MIP by building an auxiliary
graph~\cite{salvagnin2005dominance,margot2010symmetry,pfetsch2019computational},
and computing its automorphism group~$\group$ using \solver{bliss}~0.73~\cite{JunttilaKaski2015bliss}.
This way, we find the permutations~$\perm_1,\dots,\perm_k$ that
generate~$\group$.
\scip checks whether~$\group$ is
a product group~${\group_1 \otimes \dots \otimes \group_\ell}$,
since the symmetries of each factor~$\group_i$ can be handled
independently, c.f.~\cite{HojnyPfetsch2019}.
Based on the generators, \scip heuristically decides whether a
factor~$\group_i$ can be completely handled by orbitopal
fixing, c.f.~\cite{HojnyPfetsch2019}.
Since this check evaluates positively only if no generator of~$\group_i$
has a cycle of length at least~3, we also make use of orbitopal fixing in
our experiments since no permutation that is \emph{meaningful} for~\ref{Q1}
and~\ref{Q2} is already handled by orbitopal fixing.

The generators of the remaining factors are handled using different
strategies that will allow us to answer~\ref{Q1} and~\ref{Q2}.
More precisely, we compare the following settings for the
generators~$\perm_i$ that are not handled via orbitopal fixing:
\begin{description}[itemsep=0pt]
\item[nosym]
  No symmetry handling is applied, in particular also no orbitopal fixing;
\item[gen]
  We only propagate generator constraints $x \succeq \gamma_i(x)$;
\item[group]
  We propagate the constraints $x \succeq \delta(x)$ for all~$\delta \in
  \gen{\gamma_i}\setminus \{ \id \}$ individually;
\item[nopeek]
  We propagate the constraints $x \succeq \delta(x)$ for all~$\delta \in
  \gen{\gamma_i}\setminus \{ \id \}$ using
  Algorithm~\ref{alg:completemonotoneorderedcyclic} if~$\perm_i$ is
  monotone and ordered with parameter \emph{completeFixings} set to \False,
  and otherwise using Algorithm~\ref{alg:propagateallindividualcons};
\item[peek]
  Same as \texttt{nopeek}, but with \emph{completeFixings} set to \True.
The constraints $x \succeq \delta(x)$ are handled
for all~$i \in [k]$
and~$\delta \in \gen{\gamma_i}\setminus \{ \id \}$
by
Algorithm~\ref{alg:completemonotoneorderedcyclic} with
\emph{completeFixings} set to \True if $\gamma_i$ is monotone and ordered
(guaranteeing completeness).
Otherwise, Algorithm~\ref{alg:propagateallindividualcons} is used and we
check whether further variable fixings can be found using
Observation~\ref{obs:fixing}.
This check is carried out for all unfixed variables whose fixing values
were looked up in the previous calls to
Algorithm~\ref{alg:propagateallindividualcons}.
\end{description}
Only in the \texttt{peek} setting with an ordered and monotone generator,
our methods are guaranteed to find the complete set of fixings for the group generated
by this generator.
Since none of the generators~$\gamma_1,\dots,\gamma_k$ is guaranteed to be
in monotone and ordered representation, we also implemented a heuristic
that relabels the variable array to guarantee that at least one generator
per factor is monotone and ordered.
It iterates over the generators, sorted descending with respect to the
largest subcycle size and then descending on the group order.
If the variable indices of the generator are not yet relabeled, then
it relabels these such that the generator becomes monotone and ordered.
For this, we provide three options:
\begin{enumerate*}[label=\textbf{\emph{\small(\roman*)}},
	ref=\emph{\small(\roman*)}]
\item sorting the subcycles in decreasing length ({\tt max}),
\item sorting the subcycles in increasing length ({\tt min}), and
\item sorting the subcycles based on the variable index that is minimal in
the original variable ordering ({\tt respect}).
\end{enumerate*}
For example, consider three generators $\gamma_1 = (1, 8, 7, 3)$,
$\gamma_2 = (3, 4, 5, 8)$, and $\gamma_3 = (2, 5, 6, 9, 4)$.
The heuristic relabels~$\gamma_1$,
does not relabel~$\gamma_2$ since it also contains variable index 3,
and relabels~$\gamma_3$.
Using hats to distinguish the relabeled space, the heuristic yields~%
$
[1, 2, 3, 4, 5, 6, 7, 8, 9] \mapsto
[\hat 1, \hat 5, \hat 4, \hat 9, \hat 6, \hat 7, \hat 3, \hat 2, \hat 8]
$,
such that~$\gamma_1 = (\hat 1, \hat 2, \hat 3, \hat 4)$,
$\gamma_2 = (\hat 4, \hat 9, \hat 6, \hat 2)$,
and~$\gamma_3 = (\hat 5, \hat 6, \hat 7, \hat 8, \hat 9)$.
Note that~$\gamma_1$ and~$\gamma_3$ are monotone (and ordered) in this
relabeled space.
In particular, with this heuristic we make sure that at least the first
processed generator has a monotone and ordered representation.

Note that a cyclic group~$\gen{\perm}$ acting on~$n$ elements might have
order~$\Omega(2^n)$.
To prevent memory overflow in
Algorithm~\ref{alg:propagateallindividualcons} and in the individual
propagation ({\tt group}), we added a safeguard that might only propagate
a restricted set of permutations in these algorithms.
Let~$s$ be the cardinality of the support of the generator~$\gamma$
and~$\Pi \define \gen\gamma \setminus \{ \id \}$.
If $|\Pi| > \SI{e4}{}$ or $s |\Pi| > 5 \cdot 10^6$, we only enforce
the symmetry constraints for~$\Pi' \define \{ \gamma^i : i \in [k] \}$,
where~$k \in \mathds{Z}_+$ is maximal such that~$|\Pi'| \leq 10^4$
and~$s |\Pi'| \leq 5 \cdot 10^6$.

In the following subsections, we report on our experiments for two
different classes of problems, including benchmarking instances.
To reduce the impact of performance variability, all instances have been
run with~5 different random seeds.
The running time~$t_i$ per instance~$i$ is reported in shifted
geometric mean~$\prod_{i = 1}^n (t_i + s)^{\frac{1}{n}} - s$ with a shift
of~$s = \SI{10}{\second}$, as well as the number of instances solved (S)
within the time limit of~\SI{2}{\hour} per instance.
If the time limit is hit, then we report a running time of~\SI{2}{\hour}
for that run.
In the article's main part, we present aggregated results, showing the shifted
geometric means of all instances with the given settings, as well as the
number of instances solved.
Detailed tables can be found in Appendix~\ref{app:supplements}.
These tables contain information per instance, as well as the total time
spent on the instances, and the total time and percentage of time spent on
symmetry handling.
All computations were run on a Linux cluster with Intel Xeon E5
\SI{3.5}{\GHz} quad core processors and \SI{32}{\giga\byte} memory.  The
code was executed using a single thread.

\paragraph{Edge-coloring flower snarks}
Flower snark graphs~\cite{fiorini1977edge,isaacs1975infinite}, described
in Figure~\ref{fig:flowersnark}, are a family of undirected graphs with
chromatic index~4, i.e., there is no coloring of the edges with three
colors such that incident edges are colored differently.
\begin{figure}[b]
	\begin{minipage}{.3\textwidth}
		\centering
		\begin{tikzpicture}[x=.1\textwidth, y=.1\textwidth]
		\tikzstyle{every node}=[font=\scriptsize]
		\def\n{5}
		\foreach \i in {0,...,\n}%
		{
			\ifnum \i=\n
			\else
			\pgfmathsetmacro{\ii}{int(\i+1)}
			\node[] (A\i) at (
			{-(0 * cos(\i*360/\n) - 2 * sin(\i*360/\n))},
			{0 * sin(\i*360/\n) + 2 * cos(\i*360/\n)}
			) {$a_{\ii}$};
			\node[] (B\i) at (
			{-(0 * cos(\i*360/\n) - 1 * sin(\i*360/\n))},
			{0 * sin(\i*360/\n) + 1 * cos(\i*360/\n)}
			) {$b_{\ii}$};

			\pgfmathparse{int(mod(\i,2))}
			\ifnum \pgfmathresult=0
			\node[] (C\i) at (
			{-(sqrt(2)/2 * cos(\i*360/\n) - 3 * sin(\i*360/\n))},
			{sqrt(2)/2 * sin(\i*360/\n) + 3 * cos(\i*360/\n)}
			) {$c_{\ii}$};
			\node[] (D\i) at (
			{-(-sqrt(2)/2 * cos(\i*360/\n) - 3 * sin(\i*360/\n))},
			{-sqrt(2)/2 * sin(\i*360/\n) + 3 * cos(\i*360/\n)}
			) {$d_{\ii}$};
			\else
			\node[] (C\i) at (
			{-(-sqrt(2)/2 * cos(\i*360/\n) - 3 * sin(\i*360/\n))},
			{-sqrt(2)/2 * sin(\i*360/\n) + 3 * cos(\i*360/\n)}
			) {$c_{\ii}$};
			\node[] (D\i) at (
			{-(sqrt(2)/2 * cos(\i*360/\n) - 3 * sin(\i*360/\n))},
			{sqrt(2)/2 * sin(\i*360/\n) + 3 * cos(\i*360/\n)}
			) {$d_{\ii}$};
			\fi
			\fi
		}

		\foreach \i in {0,...,\n}%
		{
			\ifnum \i=\n
			\else
			\draw (A\i) to (B\i);
			\draw (A\i) to (C\i);
			\draw (A\i) to (D\i);

			\pgfmathparse{int(mod(\i+1,\n))}
			\draw (B\i) to (B\pgfmathresult);

			\ifnum \pgfmathresult=0
			\draw (C\i) to [bend left=80] (D\pgfmathresult);
			\draw (D\i) to [bend left=80] (C\pgfmathresult);
			\else
			\draw (C\i) to [bend left=80] (C\pgfmathresult);
			\draw (D\i) to [bend left=80] (D\pgfmathresult);
			\fi
			\fi
		}
		\end{tikzpicture}
	\end{minipage}%
	\begin{minipage}{.7\textwidth}
		\textbf{Construction of flower snark graph}\quad
		Graph $J_n$ is defined for odd $n \geq 3$ and has $4n$ vertices
		labeled by~$a_i, b_i, c_i, d_i$ for~$i \in [n]$.
		For $i \in [n]$, connect $a_i$ to $b_i, c_i, d_i$,
		make cycle $(b_1, b_2, \dots, b_n)$,
		and for $i < n$ connect $c_i$ to $d_{i+1}$,
		and $d_i$ to $c_{i+1}$. Last, connect $c_n$ to $c_1$ and $d_n$ to
		$d_1$.
		The automorphism group generators are~%
		$(a_1, \dots, a_n)(b_1, \dots, b_n)(c_1, \dots, c_n,d_1, \dots,
		d_n)$,
		and
		$
		(c_1, d_1)
		\cdot
		\prod_{i=2}^{\frac{n+1}{2}}
		(a_i, a_{n + 2 -i})
		(b_i, b_{n + 2 -i})
		(c_i, c_{n + 2 -i})
		(d_i, d_{n + 2 -i})%
		.
		$
	\end{minipage}
	\caption{Flower snark graph $J_5$ and construction of $J_n$.}
	\label{fig:flowersnark}
\end{figure}%
Deciding whether an edge coloring with~3 colors exists for an undirected
graph~$\mathcal G = (V, E)$ is equivalent to decide whether
\[
S_{\mathcal G} =
\left\{
x \in \binary^{E \times [3]} :
\begin{aligned}
x_{e, k} + x_{e', k} \leq 1\
&
\text{for all}\ k \in [3]\
\text{and}\ e, e' \in E\
\text{with}\ |e \cap e'| = 1, \\
\sum_{k \in [3]} x_{e, k} = 1\
&
\text{for all}\ e \in E.
\end{aligned}
\right\}%
\]%
is empty, which can be done by binary programming techniques.
Margot~\cite{margot2007symmetric} studied this binary program for the
flower snarks~$J_{13}$, $J_{15}$ and $J_{21}$, and we also use these snarks
in our experiments as they admit cyclic symmetries.
The flower snarks are defined for odd~$n \geq 3$ and have an automorphism
group of order~$4n$.
The group is generated by a cycle of order~$2n$ and a reflection.
Symmetries in the problem are therefore given by these graph
automorphisms, and by interchanging the edge colors.
These symmetries are automatically identified by \scip.

\begin{table}[t]
\caption{3-edge coloring on flower snark graphs $J_n$
for~$n \in \{3, \dots, 43\}$.}
\label{tab:flowersnark}
\scriptsize
\centering
\begin{tabular}{l*{10}{rr}}
\toprule
& \multicolumn{2}{c}{\tt nosym}& \multicolumn{2}{c}{\tt gen}&
\multicolumn{2}{c}{\tt group}& \multicolumn{2}{c}{\tt nopeek}&
\multicolumn{2}{c}{\tt peek}\\
\cmidrule(l{1pt}r{1pt}){2-3}
\cmidrule(l{1pt}r{1pt}){4-5}
\cmidrule(l{1pt}r{1pt}){6-7}
\cmidrule(l{1pt}r{1pt}){8-9}
\cmidrule(l{1pt}r{1pt}){10-11}
relabeling& time(s) & S & time(s) & S & time(s) & S & time(s) & S &
time(s) & S \\
\midrule
original            &   730.78 & 54 &   172.35 & 88 &   187.56 & 87 &
169.79 & 93 &   153.23 & 97 \\
max                 & --       &       &   407.02 & 65 &   312.93 & 70 &
278.00 & 77 &   270.19 & 78 \\
min                 & --       &       &    97.99 & 102 &   131.58 & 95
&   127.48 & 92 &   119.67 & 95 \\
respect             & --       &       &   184.44 & 88 &   173.41 & 86 &
174.32 & 88 &   178.46 & 87 \\
\cmidrule{2-11}
aggregated          &   730.78 & 54 &   189.90 & 343 &   191.75 & 338 &
180.33 & 350 &   172.84 & 357 \\
\bottomrule
\end{tabular}
\end{table}
Our testset consists of all these instances with all odd parameters $n \in
\{3, \dots, 49\}$. In Table~\ref{tab:flowersnark} we present the
aggregated results for these instances, and we refer to
Section~\ref{app:supplements} for the results per instance.
Comparing the shifted geometric means of our proposed methods (\texttt{nopeek} and
\texttt{peek}) with naively propagating individual constraints~$x \succeq \delta(x)$ (\texttt{group}),
without relabeling, our proposed methods gain \SI{9.5}{\percent} and
\SI{18.3}{\percent}, respectively.
In particular, in both the \texttt{peek} and \texttt{nopeek} variant at least one run of all
instances solved the problem, while \texttt{group} could not solve a single
instance for~$n \geq 45$.
These results are even more pronounced if we consider the subset of
instances that could not be solved without symmetry handling within the
time limit, which is for parameter~$n \geq 27$.
\begin{table}[t]
\caption{3-edge coloring on flower snark graphs $J_n$
for~$n \in \{27, \dots, 43\}$.}
\label{tab:flowersnark:restrict}
\scriptsize
\centering
\begin{tabular}{l*{10}{rr}}
\toprule
& \multicolumn{2}{c}{\tt nosym}& \multicolumn{2}{c}{\tt gen}&
\multicolumn{2}{c}{\tt group}& \multicolumn{2}{c}{\tt nopeek}&
\multicolumn{2}{c}{\tt peek}\\
\cmidrule(l{1pt}r{1pt}){2-3}
\cmidrule(l{1pt}r{1pt}){4-5}
\cmidrule(l{1pt}r{1pt}){6-7}
\cmidrule(l{1pt}r{1pt}){8-9}
\cmidrule(l{1pt}r{1pt}){10-11}
relabeling& time(s) & S & time(s) & S & time(s) & S & time(s) & S &
time(s) & S \\
\midrule
original            &  7200.00 &  0 &  1266.33 & 33 &  1526.99 & 32 &
1269.40 & 38 &  1056.42 & 42 \\
max                 & --       &       &  4752.76 & 10 &  3501.50 & 15 &
2884.53 & 22 &  2734.31 & 23 \\
min                 & --       &       &   531.09 & 47 &   912.18 & 40 &
889.73 & 37 &   797.14 & 40 \\
respect             & --       &       &  1480.93 & 33 &  1444.56 & 31 &
1452.88 & 33 &  1509.23 & 32 \\
\cmidrule{2-11}
aggregated          &  7200.00 &  0 &  1478.12 & 123 &  1630.32 & 118 &
1475.85 & 130 &  1366.37 & 137 \\
\bottomrule
\end{tabular}
\end{table}
These results are shown in Table~\ref{tab:flowersnark:restrict}:
we gain \SI{30.8}{\percent} and \SI{16.9}{\percent}
when comparing \texttt{nopeek} and \texttt{peek} with  in the original labeling,
respectively.

Since the instances only have one generator with order larger than two,
our algorithm guarantees completeness with respect to this cyclic subgroup
in the automatically relabeled \texttt{peek}-variants.
As the tables show, relabeling has a big impact on the running times and
on the number of instances that we can solve.
We observe a significant improvement when comparing the \texttt{nopeek} and \texttt{peek}
variants with \texttt{group} in the {\tt max}-relabeling and
{\tt min}-relabeling. In the
{\tt respect}-relabeling a worse result is found.
Surprisingly, the weaker symmetry handling method \texttt{gen}
is on average the fastest in the {\tt min}-relabeling.

The \texttt{peek}-variant requires significant additional computational time.
This is also reflected in the total time spent on handling symmetries,
which is \SI{16.7}{\percent}, \SI{18.0}{\percent}, \SI{6.9}{\percent} and
\SI{5.9}{\percent} for the original, max, min and respect relabelings,
respectively. As a comparison, this is at most \SI{3.7}{\percent} for the
\texttt{nopeek}-variant.
However, this investment pays off in terms of the average time to solve
the instances, since the \texttt{peek}-variant solves the instances on average
faster than the \texttt{nopeek}-variant, when comparing the aggregated results.
We hypothesize that the additional fixings found by peeking outweighs the
additional time spent on handling symmetries, and that this effect will be
even more pronounced for more difficult instances with a larger cyclic
symmetry subgroup order.
This is consistent with the numbers in the tables of the supplements:
For instances with a higher value of the parameter $n$,
the running times of {\texttt{nopeek}} and {\texttt{peek}} relative to
{\texttt{nosym}}, {\texttt{gen}} and {\texttt{symregroup}} decrease.
For the flower snark instances, we thus can answer \ref{Q1} and \ref{Q2}
affirmatively as Algorithm~\ref{alg:completemonotoneorderedcyclic} and (in
the not relabeled case) Algorithm~\ref{alg:propagateallindividualcons}
clearly outperform the \texttt{group} setting.

\paragraph{MIPLIB testset}
MIPLIB 2010~\cite{KochEtAl2011MIPLIB}
and MIPLIB 2017~\cite{Gleixner2021MIPLIB} are data bases of benchmark MIP
instances consisting of 1426 instances.
We extracted all instances for which \scip could identify at least one
generator that defines a cyclic group of order at least~3, and that \scip
can presolve within three hours.
This results in~38 instances relevant for our experiments.
We restricted this test set further by removing all instances that cannot
be solved by any of our settings within three hours or that can be solved
within presolving.
This results in~10 instances, which we use for the experiments below.
Note that this is a very small testset and that the characteristics
between the instances can be very different.
For this reason, we carefully analyze aggregated results and refer to the
appendix for detailed per-instance results when necessary.

\begin{table}[t]
\caption{MIPLIB 2010 and MIPLIB 2017 benchmark instances with group
generators larger than two.}
\label{tab:miplib}
\scriptsize
\centering
\begin{tabular}{l*{10}{rr}}
\toprule
& \multicolumn{2}{c}{\tt nosym}& \multicolumn{2}{c}{\tt gen}&
\multicolumn{2}{c}{\tt group}& \multicolumn{2}{c}{\tt nopeek}&
\multicolumn{2}{c}{\tt peek}\\
\cmidrule(l{1pt}r{1pt}){2-3}
\cmidrule(l{1pt}r{1pt}){4-5}
\cmidrule(l{1pt}r{1pt}){6-7}
\cmidrule(l{1pt}r{1pt}){8-9}
\cmidrule(l{1pt}r{1pt}){10-11}
relabeling& time(s) & S & time(s) & S & time(s) & S & time(s) & S &
time(s) & S \\
\midrule
original            &  1853.78 & 42 &   491.69 & 50 &   407.76 & 50 &
449.34 & 48 &   506.23 & 48 \\
max                 & --       &       &  1061.29 & 47 &   812.31 & 48 &
771.56 & 48 &   796.69 & 49 \\
min                 & --       &       &   901.65 & 48 &   612.10 & 50 &
837.48 & 47 &   657.29 & 49 \\
respect             & --       &       &   970.19 & 47 &   743.01 & 49 &
639.20 & 50 &   723.93 & 49 \\
\cmidrule{2-11}
aggregated          &  1853.78 & 42 &   822.47 & 192 &   623.37 & 197 &
656.66 & 193 &   662.01 & 195 \\
\bottomrule
\end{tabular}
\end{table}
The results of the performance tests on the~10 instances are shown in
Table~\ref{tab:miplib}.
It is clear that the results are very sensitive to the chosen relabeling
variant.
On average, the instances could be solved significantly faster when the
variable ordering of the original problem is used, rather than any
relabeled variant.
We hypothesize that this result can be attributed to internal behavior of
\scip, where operations (such as branching) on variables with a low
internal index are preferred.
Moreover, in contrast to the flower snark instances, many benchmark
instances find multiple generators of order larger than two, the order of
these generators may be low, and we may not always be able to relabel
all these group generators such that they are all monotone and ordered.
Together, for such diverse instances, this may explain why relabeling has
a negative impact on the running times.
This means that even in the heuristically relabeled and \texttt{peek}-cases, since
after relabeling often not all generators are monotone and ordered, we
often need to fallback to the simpler variant
(Algorithm~\ref{alg:propagateallindividualcons}), which does not guarantee
completeness for the generating subgroup and comes at a higher asymptotic
computational cost.
This might explain why our more sophisticated algorithms perform on average
by roughly~\SI{5}{\percent} and~\SI{10}{\percent} worse than the
\texttt{group} setting.
Due to the small size of the test set, however, one needs to be careful
interpreting the aggregated numbers as they can be highly biased because of
outliers.

\begin{table}[t]
\caption{Relevant MIPLIB 2010 and MIPLIB 2017 benchmark instances,
without automatic relabeling.}
\label{tab:miplib:norelabel}
\scriptsize
\centering
\begin{tabular}{l*{10}{rr}}
\toprule
& \multicolumn{2}{c}{\tt nosym}& \multicolumn{2}{c}{\tt gen}&
\multicolumn{2}{c}{\tt group}& \multicolumn{2}{c}{\tt nopeek}&
\multicolumn{2}{c}{\tt peek}\\
\cmidrule(l{1pt}r{1pt}){2-3}
\cmidrule(l{1pt}r{1pt}){4-5}
\cmidrule(l{1pt}r{1pt}){6-7}
\cmidrule(l{1pt}r{1pt}){8-9}
\cmidrule(l{1pt}r{1pt}){10-11}
instance & time(s) & S & time(s) & S & time(s) & S & time(s) & S & time(s)
& S \\
\midrule
cod105               &  7200.04 &  0 &    65.94 &  5 &    61.84 &  5 &
59.14 &  5 &    57.45 &  5 \\
cov1075              &  4761.35 &  5 &   117.17 &  5 &    47.48 &  5 &
32.71 &  5 &    33.15 &  5 \\
fastxgemm-n2r6s0t2   &  1628.36 &  5 &   286.01 &  5 &   209.40 &  5 &
141.51 &  5 &   145.82 &  5 \\
fastxgemm-n2r7s4t1   &  6394.82 &  2 &   971.15 &  5 &   812.15 &  5 &
768.81 &  5 &   776.56 &  5 \\
neos-1324574         &  6251.90 &  5 &  2398.62 &  5 &  2080.58 &  5 &
2064.22 &  5 &  2060.96 &  5 \\
neos-3004026-krka    &   129.58 &  5 &   262.07 &  5 &   144.61 &  5 &
685.34 &  4 &  1286.86 &  4 \\
neos-953928          &  2332.51 &  5 &  1471.52 &  5 &  1166.06 &  5 &
975.68 &  5 &   975.07 &  5 \\
neos-960392          &   850.24 &  5 &  1954.62 &  5 &  2278.05 &  5 &
2451.05 &  4 &  2454.52 &  4 \\
supportcase29        &   282.05 &  5 &   196.09 &  5 &   253.30 &  5 &
388.96 &  5 &   662.54 &  5 \\
wachplan             &  2713.70 &  5 &   906.09 &  5 &   939.21 &  5 &
849.65 &  5 &   849.25 &  5 \\
\cmidrule{2-11}
All instances combined                      &  1853.78 & 42 &   491.69 &
50 &   407.76 & 50 &   449.34 & 48 &   506.23 & 48 \\
Total time                                  & \multicolumn{2}{r}{
45:46:14     } & \multicolumn{2}{r}{ 13:24:05     } & \multicolumn{2}{r}{
12:12:07     } & \multicolumn{2}{r}{ 15:57:33     } & \multicolumn{2}{r}{
18:43:22     } \\
Symmetry time                               & \multicolumn{2}{r}{
0:00:00     } & \multicolumn{2}{r}{  0:03:44     } & \multicolumn{2}{r}{
0:03:47     } & \multicolumn{2}{r}{  0:06:56     } & \multicolumn{2}{r}{
1:56:05     } \\
Percentage time                             & \multicolumn{2}{r}{
0.0\%} & \multicolumn{2}{r}{         0.5\%} & \multicolumn{2}{r}{
0.5\%} & \multicolumn{2}{r}{         0.7\%} & \multicolumn{2}{r}{
10.3\%} \\
\bottomrule
\end{tabular}
\end{table}

Looking into the results on a per-instance basis confirms this conjecture.
Table~\ref{tab:miplib:norelabel} provides these results without relabeling.
We can observe two classes of instances.
The instances for which no symmetry handling (\texttt{nosym}) or only
handling generators (\texttt{gen}) perform best---\emph{neos-3004026-krka}, \emph{neos-960392},
and \emph{supportcase29}---and the remaining instances which benefit from
more aggressive symmetry handling.
On the three mentioned instances, our methods are significantly slower than
\texttt{group}.
However, since (almost) no symmetry handling performs best for these
instances, we conjecture that symmetry handling might sometimes not be a
favorable strategy.
For example, neos-3004026-krka and neos-960392 have already an optimal dual bound after
processing the root node, i.e., once a matching primal solution is found, \scip
immediately terminates.
Based on the results it thus seems that the symmetry-based variable fixings
hinder \scip's heuristics to find a good solution.
In this case, of course, less symmetry handling is beneficial.

On the remaining seven instances that benefit from symmetry handling, however,
our methods \texttt{nopeek} and \texttt{peek} clearly dominate the
\texttt{group} setting.
As Table~\ref{tab:miplibwo} shows, we can improve the running time of
\texttt{group} by \SI{5.9}{\percent}
and \SI{4.6}{\percent}, respectively, if considering the mean running for
all relabeling strategies.
Without relabeling, the performance improves even by \SI{14.2}{\percent}
and \SI{13.9}{\percent}, respectively.
Thus, these numbers also indicate a positive answer to Question~\ref{Q1}:
If symmetry handling is important to solve an instance efficiently, then
exploiting the structure of cyclic groups via Algorithms~\ref{alg:propagateallindividualcons}
and~\ref{alg:completemonotoneorderedcyclic} outperforms the approach that
neglects the group structure.
For Question~\ref{Q2}, the answer is less clear, because the performance of
all symmetry handling methods is highly sensitive to changing the variable
labeling.

\begin{table}[t]
\caption{Relevant MIPLIB 2010 and MIPLIB 2017 benchmark instances, without
\emph{neos-3004026-krka}, \emph{neos-920392}, and \emph{supportcase29}.}
\label{tab:miplibwo}
\scriptsize
\centering
\begin{tabular}{l*{10}{rr}}
\toprule
& \multicolumn{2}{c}{\tt nosym}& \multicolumn{2}{c}{\tt gen}&
\multicolumn{2}{c}{\tt group}& \multicolumn{2}{c}{\tt nopeek}&
\multicolumn{2}{c}{\tt peek}\\
\cmidrule(l{1pt}r{1pt}){2-3}
\cmidrule(l{1pt}r{1pt}){4-5}
\cmidrule(l{1pt}r{1pt}){6-7}
\cmidrule(l{1pt}r{1pt}){8-9}
\cmidrule(l{1pt}r{1pt}){10-11}
relabeling& time(s) & S & time(s) & S & time(s) & S & time(s) & S &
time(s) & S \\
\midrule
original            &  3917.80 & 27 &   501.57 & 35 &   393.39 & 35 &
337.55 & 35 &   338.57 & 35 \\
max                 & --       &       &  1193.75 & 33 &   668.57 & 35 &
694.06 & 35 &   698.20 & 35 \\
min                 & --       &       &  1062.99 & 33 &   719.87 & 35 &
710.16 & 35 &   706.24 & 35 \\
respect             & --       &       &  1226.05 & 33 &   761.70 & 35 &
678.57 & 35 &   715.09 & 35 \\
\cmidrule{2-11}
aggregated          &  3917.80 & 27 &   940.64 & 134 &   616.62 & 140 &
580.20 & 140 &   588.38 & 140 \\
\bottomrule
\end{tabular}
\end{table}

\paragraph{Conclusion}
The algorithms developed in this article allow to efficiently handle
symmetries of cyclic groups.
In particular, if the generator of the cyclic group is ordered and
monotone, Algorithm~\ref{alg:completemonotoneorderedcyclic} is guaranteed
to find all symmetry based variable fixings that can be derived from a set
of given fixings.
As illustrated by our numerical experiments for the flower snark instances,
this algorithm clearly dominates the propagation of individual constraints
(\texttt{group}) and the weaker (but faster) variant of
Algorithm~\ref{alg:completemonotoneorderedcyclic} that does not come with a
guarantee of completeness.
On general benchmark instances, the situation is less clear.
On the one hand, it seems that some instances do not benefit from symmetry
handling, e.g., because of interferences with heuristics.
On the other hand, while flower snark instances only had one cyclic
generator, the benchmark instances might admit several cyclic generators
that cannot be made ordered and monotone simultaneously.
Here, the best results for instances that benefit from symmetry handling
could be achieved without relabeling.
In this case, the completeness of
Algorithm~\ref{alg:completemonotoneorderedcyclic} is not guaranteed, but
the weaker \texttt{nopeek} version still dominates the individual treatment
of permutations.
We thus conclude that the dedicated methods that exploit the structure of
cyclic groups developed in this article clearly help to improve the
performance of symmetry handling in \scip.

Nevertheless, the numerical results also show interesting possibilities for
future research.
On the one hand, it seems that it is important to identify which instances
benefit from (aggressive) symmetry handling.
On the other hand, since completeness of
Algorithm~\ref{alg:completemonotoneorderedcyclic} is only guaranteed for
monotone and ordered permutations~$\perm$, it would be helpful to derive methods
that achieve completeness even if one of the assumptions on~$\perm$ are
dropped.
This reduces the impact of the labeling, which has be shown to highly
influence the performance of \scip.
Finally, if no alternative algorithms can be developed, also more
sophisticated methods for finding a good relabeling could be beneficial.
This, however, is out of scope of this article.

\paragraph{Acknowledgments} We are thankful to Rudi Pendavingh, Marc E.\
Pfetsch, and Frits Spieksma for carefully reading a preliminary version of
the article and their suggestions to improve the presentation.

{
\scriptsize
\bibliographystyle{plain}

}

\appendix
\section{Proofs of Section~\ref{sec:completepermset}}
\label{sec:proofs}
In Section~\ref{sec:completepermset}, we presented various propositions,
without giving their proofs.
In this appendix, the missing proofs are provided.

\terminatecondition*
\begin{proof}
Let~$\gamma \in \Pi$,~$t$ be some time index,~%
$\emptyset \notin \infconj{\gamma}{t}$,
and that~$\{ (i, b) \} \in \infconj{\gamma}{t}$ implies~$i \in I_{1-b}^t$.
For brevity, we denote~$F^t \define F(I_0^t, I_1^t)$,
and~$\xymretope{\gamma}^t \define \xymretope{\gamma}^{(i_\gamma^t)}$.
We prove correctness for each of
\ref{prop:terminationcondition:1},
\ref{prop:terminationcondition:2} and
\ref{prop:terminationcondition:3}
by contradiction,
by assuming that the set of fixings~$(I_0^t, I_1^t)$
is not complete for the constraint~$x \succeq \gamma(x)$
with~$\gamma \in \Pi$.
We denote a missing valid fixing by~$f = (i, b)$,
and show that a contradiction follows.
This means that~$\xymretope\gamma \cap F^t \cap V(\bar f) = \emptyset$.
Also,~$i \notin I_0^t \cup I_1^t$
implies~$\{ \bar f \} \notin \infconj{\gamma}{t}$,
which means~$\xymretope{\gamma}^t \cap F^t \cap V(\bar f) \neq \emptyset$.

First suppose~\ref{prop:terminationcondition:1},
that~$\eqconj{\gamma}{t} = \emptyset$.
Then no vector~$x \in F^t$
exists with~$x \eqp{i_\gamma^t} \gamma(x)$.
In this case, $x \prec \gamma(x)$ for all~$x \in F^t$ with~$x_i = 1-b$,
since~$\xymretope\gamma \cap F^t \cap V(\bar f) = \emptyset$.
Hence, $x \precp{i_\gamma^t} \gamma(x)$.
A contradiction follows,
as this observation and~$\emptyset \notin \infconj{\gamma}{t}$
shows that~$\{ \bar f \}$ is a minimal~$i_\gamma^t$-inf-conjunction,
contradicting~$\{ \bar f \} \notin \infconj{\gamma}{t}$.

Second suppose~\ref{prop:terminationcondition:2}.
If~$i_\gamma^t > n$,
then~$\xymretope\gamma \cap F^t = \xymretope\gamma^t \cap F^t$
violates~%
$\xymretope\gamma \cap F^t \cap V(\bar f)
= \emptyset \neq \xymretope\gamma^t \cap F^t \cap V(\bar f)$.

Last suppose~\ref{prop:terminationcondition:3}.
For any vector~$x \in \binary^n$,
with~$x_{i_\gamma^t} = 1$
and~$\gamma(x)_{i_\gamma^t} = x_{\gamma^{-1}(i_\gamma^t)} = 0$,
we have that~$x \succeqp{i_\gamma^t} \gamma(x)$
implies~$x \succeq \gamma(x)$,
so~%
$
\xymretope\gamma \supseteq
\xymretope\gamma^t \cap
V(\{ (i_\gamma^t, 1), (\gamma^{-1}(i_\gamma^t), 0) \})
$.
Hence,~$\xymretope\gamma \cap F^t \cap V(\bar f) = \emptyset$
yields~%
$
\xymretope\gamma^t \cap F^t \cap
V(\{ \bar f, (i_\gamma^t, 1), (\gamma^{-1}(i_\gamma^t), 0) \})
= \emptyset
$,
meaning that~%
$C = \{ \bar f, (i_\gamma^t, 1), (\gamma^{-1}(i_\gamma^t), 0) \}$
is an~$i_\gamma^t$-inf-conjunction.
As~$\gamma(i_\gamma^t) > i_\gamma^t$,
entry~$i_\gamma^t$ does not occur
in the first~$i_\gamma^t - 1$ entries of a vector~$x$ and~$\gamma(x)$,
so~$(i_\gamma^t, 1)$ cannot be element of a
minimal~$i_\gamma^t$-inf-conjunction.
Likewise, because~$\gamma^{-1}(i_\gamma^t) > i_\gamma^t$
and~$\gamma(\gamma^{-1}(i_\gamma^t)) = i_\gamma^t$,
entry~$\gamma^{-1}(i_\gamma^t)$ does also not occur in the
first~$i_\gamma^t - 1$ entries of any vector~$x$ and~$\gamma(x)$,
so~$(\gamma^{-1}(i_\gamma^t), 0)$ cannot be element of any
minimal~$i_\gamma^t$-inf-conjunction, too.
By removing these, we have that~$\{ \bar f \}$
is an~$i_\gamma^t$-inf-conjunction,
contradicting~$\xymretope{\gamma}^t \cap F^t \cap V(\bar f) \neq
\emptyset$.
\end{proof}

\indexeqconj*
\begin{proof}
In this proof, for brevity we denote~$F^t \define F(I_0^t, I_1^t)$,
and~$\xymretope{\gamma}^t \define \xymretope{\gamma}^{(i_\gamma^t)}$.
In particular, because no fixing is applied at time~$t$,
we have~$F^{t + 1} = F^t$.

An index increasing event for permutation~$\gamma \in \Pi$ does not impact
any permutation but~$\gamma$,
so the update is correct for all~$\delta \in \Pi \setminus \{ \gamma \}$.
For~$\gamma$,
we prove correctness by showing~$\eqconj{\gamma}{t+1} \subseteq Y \cup Z$
and~$\eqconj{\gamma}{t+1} \supseteq Y \cup Z$,
respectively.
Consistent with the index increasing event, we have~%
$i_\gamma^{t + 1} = i_\gamma^t + 1$,~$I_0^{t + 1} = I_0^t$,
and~$I_1^{t + 1} = I_1^t$.

To prove the first inclusion, let~$\bar C \in \eqconj{\gamma}{t + 1}$
be a minimal~$i_\gamma^{t + 1}$-eq-conjunction.
Then~$\bar C$ is especially an~$i_\gamma^t$-eq-conjunction
(but not necessarily minimal).

On the one hand,
suppose that~$\bar C$ is a \emph{minimal}~$i_\gamma^t$-eq-conjunction,
that is~$\bar C \in \eqconj{\gamma}{t}$.
We show that~$\bar C \in Y$.
Since~$\bar C$ is an~$i_\gamma^{t + 1}$-eq-conjunction,
all~$x \in \hat F(C) \define F(I_0^t, I_1^t) \cap V(\bar C)$
satisfy~$x \preceqp{i_\gamma^{t + 1}} \gamma(x)$
and there is~$\tilde x \in \hat F(C)$
with~$\tilde x \eqp{i_\gamma^{t + 1}} \gamma(\tilde x)$.
As all~$x \in \xymretope\gamma^{t} \cap F^t \cap V(\bar C)$
have~$x \eqp{i_\gamma^t} \gamma(x)$,
all conditions of the set~$Y$
for~$\bar C \in \eqconj{\gamma}{t}$ are satisfied,
so~$\bar C \in Y$.

On the other hand, suppose that~$\bar C$ is
no minimal~$i_\gamma^t$-eq-conjunction.
We show that~$\bar C \in Z$.
There is~$C \in \eqconj{\gamma}{t}$ with~$C \subsetneq \bar C$,
such that all~$x \in \xymretope\gamma^{t} \cap F^t \cap V(C)$
have~$x \eqp{i_\gamma^t} \gamma(x)$.
Because~$\bar C \in \eqconj{\gamma}{t + 1}$
and~$C \subsetneq \bar C$,
there exists~$x \in \xymretope\gamma^{t + 1} \cap F^{t+1} \cap V(C)
\subseteq \xymretope\gamma^t \cap F^t \cap V(C)$
with~$x \succp{i_\gamma^{t + 1}} \gamma(x)$,
meaning that this vector~$x$
satisfies~$x_{i_\gamma^t} > \gamma(x)_{i_\gamma^t}$.
Hence,~$C \in \eqconj{\gamma}{t}$ satisfies the last condition of set~$Z$.
We show the remaining conditions with the following claim:

\begin{claim}
Let~$\bar C \in \eqconj{\gamma}{t + 1}$, and~$C \in \eqconj{\gamma}{t}$
with~$C \subsetneq \bar C$.
Suppose there exists a vector~$x \in \xymretope\gamma^t \cap F^t \cap V(C)$
with~$x_{i_\gamma^t} > \gamma(x)_{i_\gamma^t}$.
\begin{enumerate*}[label=\textbf{\emph{\small(\roman*)}},
ref=\emph{\small(\roman*.)}]
\item%
\label{claim:indexeqconj:1}%
If $x_{i_\gamma^t} = \gamma(x)_{i_\gamma}^t = 0$
for all~$x \in \xymretope\gamma^{t + 1} \cap F^{t+1} \cap V(\bar C)$,
then~$(i_\gamma^t, 0) \in \bar C$;
and
\item%
\label{claim:indexeqconj:2}%
if $x_{i_\gamma^t} = \gamma(x)_{i_\gamma}^t = 1$
for all~$x \in \xymretope\gamma^{t + 1} \cap F^{t+1} \cap V(\bar C)$,
then~$(\gamma^{-1}(i_\gamma^t), 1) \in \bar C$.
\end{enumerate*}
\end{claim}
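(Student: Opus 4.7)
I will prove (i) by contradiction; claim (ii) follows by a symmetric argument that swaps the roles of $0$ and $1$ (and of $i_\gamma^t$ and $\gamma^{-1}(i_\gamma^t)$). For brevity write $F^t \define F(I_0^t, I_1^t)$, and assume for contradiction that $(i_\gamma^t, 0) \notin \bar C$. Since $\bar C \in \eqconj{\gamma}{t+1}$, there is an equality-case vector $\tilde x \in F^t \cap V(\bar C)$ with $\tilde x \eqp{i_\gamma^{t+1}} \gamma(\tilde x)$, and in particular $\tilde x \in \xymretope{\gamma}^{(i_\gamma^{t+1})}$. Hypothesis~(i) then forces $\tilde x_{i_\gamma^t} = \tilde x_{\gamma^{-1}(i_\gamma^t)} = 0$, which rules out both $(i_\gamma^t, 1) \in \bar C$ and $(\gamma^{-1}(i_\gamma^t), 1) \in \bar C$. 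Together with $i_\gamma^t \notin I_0^t \cup I_1^t$ (which follows from the existence of the hypothesis vector $y \in F^t$ with $y_{i_\gamma^t} = 1$), this shows that $\bar C$ does not fix entry $i_\gamma^t$ at all.

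Next I would construct a candidate contradicting vector $\hat x$ by flipping $\tilde x_{i_\gamma^t}$ from $0$ to $1$; by the previous step $\hat x \in F^t \cap V(\bar C)$. In the straightforward case $\gamma(i_\gamma^t) > i_\gamma^t$, a direct computation (using that $\gamma^{-1}(j) \neq i_\gamma^t$ for every $j < i_\gamma^t$) shows $\hat x \eqp{i_\gamma^t} \gamma(\hat x)$ and $\hat x_{i_\gamma^t} = 1 > 0 = \tilde x_{\gamma^{-1}(i_\gamma^t)} = \gamma(\hat x)_{i_\gamma^t}$. Hence $\hat x \succp{i_\gamma^{t+1}} \gamma(\hat x)$, so $\hat x \in \xymretope{\gamma}^{(i_\gamma^{t+1})} \cap F^t \cap V(\bar C)$ with $\hat x_{i_\gamma^t} = 1$, contradicting hypothesis~(i).

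The main obstacle will be the wrap-around case $\gamma(i_\gamma^t) < i_\gamma^t$: the naive flip breaks the equality chain at position $\gamma(i_\gamma^t) < i_\gamma^t$, so $\hat x$ no longer lies in $\xymretope{\gamma}^{(i_\gamma^{t+1})}$. Here I would work with the hypothesis vector $y$ directly, which already satisfies $y \succp{i_\gamma^{t+1}} \gamma(y)$ and hence $y \in \xymretope{\gamma}^{(i_\gamma^{t+1})}$, so it suffices to show $y \in V(\bar C)$. The equality $y \eqp{i_\gamma^t} \gamma(y)$ propagates along the $\gamma$-cycle containing $i_\gamma^t$: it forces $y_{\gamma^k(i_\gamma^t)} = y_{i_\gamma^t} = 1$ for every $k \geq 1$ with $\gamma^k(i_\gamma^t) < i_\gamma^t$, and $y_{\gamma^{-k}(i_\gamma^t)} = 0$ analogously. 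Iterating the ruling-out argument of the first paragraph along each of these chain positions excludes any fixing in $\bar C \setminus C$ that would be incompatible with $y$ — such a fixing would, through the very same chain equality for $\tilde x$, contradict hypothesis~(i) at the corresponding chain position. Once $y \in V(\bar C)$ is established, $y \in \xymretope{\gamma}^{(i_\gamma^{t+1})} \cap F^t \cap V(\bar C)$ with $y_{i_\gamma^t} = 1$ contradicts hypothesis~(i) and closes the proof.
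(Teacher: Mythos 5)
Your setup and your treatment of the case $\gamma(i_\gamma^t) > i_\gamma^t$ are sound and match the intended argument: flipping the equality-case vector $\tilde x$ at the unfixed entry $i_\gamma^t$ produces a vector of $\xymretope{\gamma}^{(i_\gamma^{t+1})} \cap F(I_0^t,I_1^t) \cap V(\bar C)$ with value $1$ at $i_\gamma^t$, contradicting hypothesis (i). The wrap-around case $\gamma(i_\gamma^t) < i_\gamma^t$, however, is where your proposal has a genuine gap. Your plan is to show that the hypothesis vector $y$ (with $y_{i_\gamma^t}=1$) lies in $V(\bar C)$ and then contradict hypothesis (i). But the fixings in $\bar C \setminus C$ may be on arbitrary entries, not only on entries of the $\gamma$-cycle through $i_\gamma^t$, and hypothesis (i) constrains only the values $x_{i_\gamma^t}$ and $\gamma(x)_{i_\gamma^t}$ of vectors in $\xymretope{\gamma}^{(i_\gamma^{t+1})} \cap F(I_0^{t+1},I_1^{t+1}) \cap V(\bar C)$; there is no ``hypothesis (i) at the corresponding chain position'' to iterate, so the claimed exclusion of all fixings of $\bar C\setminus C$ incompatible with $y$ is unsupported (and establishing it would essentially amount to re-proving the statement). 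In addition, the chain propagation as you state it is imprecise: from $y \eqp{i_\gamma^t} \gamma(y)$ you may conclude $y_{\gamma^k(i_\gamma^t)} = y_{i_\gamma^t}$ only when \emph{all} intermediate iterates $\gamma^1(i_\gamma^t),\dots,\gamma^k(i_\gamma^t)$ are smaller than $i_\gamma^t$, not merely the last one.

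The case $\gamma(i_\gamma^t) < i_\gamma^t$ can be closed much more locally, and the contradiction comes from $C$ at time $t$ rather than from $\bar C$ at time $t+1$ (no control over $\bar C\setminus C$ is needed). Since $C$ is an $i_\gamma^t$-eq-conjunction and $y \in \xymretope{\gamma}^{(i_\gamma^t)} \cap F(I_0^t,I_1^t) \cap V(C)$, you have $y \eqp{i_\gamma^t} \gamma(y)$, and $\gamma(i_\gamma^t) < i_\gamma^t$ gives $y_{\gamma(i_\gamma^t)} = \gamma(y)_{\gamma(i_\gamma^t)} = y_{i_\gamma^t} = 1$ (one step of the chain suffices). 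Now flip entry $i_\gamma^t$ of $y$ to $0$; this is legal because, as you established in your first paragraph, $i_\gamma^t \notin I_0^t \cup I_1^t$ and neither $(i_\gamma^t,0)$ nor $(i_\gamma^t,1)$ lies in $C$. The resulting $z \in F(I_0^t,I_1^t) \cap V(C)$ satisfies $z \eqp{\gamma(i_\gamma^t)} \gamma(z)$, $z_{\gamma(i_\gamma^t)} = 1$ and $\gamma(z)_{\gamma(i_\gamma^t)} = z_{i_\gamma^t} = 0$, hence $z \succp{i_\gamma^t} \gamma(z)$, contradicting that every vector of $F(I_0^t,I_1^t) \cap V(C)$ must satisfy $z \preceqp{i_\gamma^t} \gamma(z)$. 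Replacing your third paragraph by this argument repairs the proof.
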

\begin{claimproof}[Proof of claim]
The proof for statements~\ref{claim:indexeqconj:1}
and~\ref{claim:indexeqconj:2} are analogous, so we only prove it
for~\ref{claim:indexeqconj:1}.
Suppose that the precondition of \ref{claim:indexeqconj:1} holds,
and let~$\hat x \in \xymretope\gamma^{t+1} \cap F^{t+1} \cap V(\bar C)$,
such that~$\hat x \eqp{i_\gamma^t + 1} \gamma(\hat x)$
and~$\hat x_{i_\gamma^t} = 0$.
In particular,~$(i_\gamma^t, 1) \notin \bar C$.
For sake of contradiction, suppose
that~$(i_\gamma^t, 0) \notin \bar C$.
Then~$(i_\gamma^t, 0) \notin C$, as well.
We find a contradiction
for the three cases~$i_\gamma^t = \gamma(i_\gamma^t)$,
$i_\gamma^t < \gamma(i_\gamma^t)$
and~$i_\gamma^t > \gamma(i_\gamma^t)$,
together implying \ref{claim:indexeqconj:1}.
Since there exists a vector~$x \in \xymretope\gamma^t \cap F^t \cap V(C)$
with~$x_{i_\gamma^t} > \gamma(x)_{i_\gamma^t}$,
$i_\gamma^t$ is no fixed point of $\gamma$,
so~$i_\gamma^t \neq \gamma(i_\gamma^t)$.
If~$i_\gamma^t < \gamma(i_\gamma^t)$,
let~$z \in F^{t} \cap V(\bar C)$
with~$z_i = \hat x_i$ for~$i \neq i_\gamma^t$
and~$z_{i_\gamma^t} = 1$.
Then~$z
\succp{i_\gamma^{t + 1}}
\hat x
\eqp{i_\gamma^{t + 1}}
\gamma(\hat x)
\eqp{i_\gamma^{t + 1}}
\gamma(z)
$,
where the last equality is a consequence of that the vector~$\hat x$
and~$z$ only differ in entry~$i_\gamma^t$,
which is entry~$\gamma(i_\gamma^t) > i_\gamma^t$
of the permuted vectors~$\gamma(\hat x)$ and~$\gamma(z)$.
This is not possible, as~$\bar C$ is a~$i_\gamma^{t + 1}$-eq-conjunction.
Last, if~$i_\gamma^t > \gamma(i_\gamma^t)$,
let ~$x \in \xymretope\gamma^t \cap F^t \cap V(C)$
with~$x_{i_\gamma^t} = 1$,
and let~$z \in \xymretope\gamma^t \cap F^t \cap V(C)$
with~$z_i = x_i$ for all~$i \neq i_\gamma^t$,
and~$z_{i_\gamma^t} = 0$.
Then~$x \eqp{\gamma(i_\gamma^t)} z$
and~$\gamma(x) \eqp{\gamma(i_\gamma^t)} \gamma(z)$,
and~$x_{\gamma(i_\gamma^t)}
= \gamma(x)_{\gamma(i_\gamma^t)}
= x_{i_\gamma^t} = 1$,
while~$1 = x_{\gamma(i_\gamma^t)} = z_{\gamma(i_\gamma^t)}$
and~$\gamma(z_{\gamma(i_\gamma^t)}) = z_{i_\gamma^t} = 0$.
Hence,~$z \succp{\gamma(i_\gamma^t) + 1} \gamma(z)$,
which contradicts that~$C$ is a valid~$i_\gamma^t$-eq-conjunction.
A contradiction follows in all cases, so~$(i_\gamma^t, 0) \in \bar C$.
\end{claimproof}
By the claim, we thus have that~$(i_\gamma^t, 0) \in \bar C$
or~$(\gamma^{-1}(i_\gamma^t), 1) \in \bar C$.
The two cases are analogous, so suppose without loss of generality
that~$(i_\gamma^t, 0) \in \bar C$.
As~$C$ is an~$i_\gamma^t$-eq-conjunction,~%
$\hat C \define C \cup \{ (i_\gamma^t, 0) \} \subseteq \bar C$
is also an~$i_\gamma^t$-eq-conjunction.
In particular, it is also a~$i_\gamma^{t + 1}$-eq-conjunction,
since all~$x \in F(I_0^{t + 1}, I_1^{t + 1}) \cap V(\hat C)$
have~$x \preceqp{i_\gamma^t} \gamma(x)$
and~$0 = x_{i_\gamma^t} \leq \gamma(x)_{i_\gamma^t}$,
meaning that~$x \preceqp{i_\gamma^{t + 1}} \gamma(x)$.
As~$\hat C \subseteq \bar C$, there must also be one vector~%
$x \in F(I_0^{t + 1}, I_1^{t + 1}) \cap V(\hat C)$
with~$x \eqp{i_\gamma^{t + 1}} \gamma(x)$,
that is~%
$\xymretope\gamma^{t + 1} \cap F^{t+1} \cap V(\hat C) \neq \emptyset$.
Since~$\bar C \in \eqconj{\gamma}{t + 1}$
and~$\hat C \subseteq \bar C$ is an~$i_\gamma^{t + 1}$-eq-conjunction,
it must hold that~$\bar C = \hat C = C \cup S$,
where~$S = \{ (i_\gamma^t, 0) \}$.
All remaining conditions hold, so~$\bar C \in Z$.
Together with the analogous case, this proves
that~$\eqconj{\gamma}{t + 1} \subseteq Y \cup Z$.
In the remainder we prove the remaining inclusion.

Let~$\bar C \in Y$.
Then~$\bar C \in \eqconj{\gamma}{t}$,
so it is a valid~$i_\gamma^t$-eq-conjunction.
Added to this, by the second condition,
all~$x \in \xymretope\gamma^t \cap F^t \cap V(\bar C)$
have~$x \preceqp{i_\gamma^{t + 1}} \gamma(x)$,
and by the third condition,
there is an~$x \in \xymretope\gamma^t \cap F^t \cap V(\bar C)$
with~$x \eqp{i_\gamma^{t + 1}} \gamma(x)$.
Thus, $\bar C$ is an $(i_\gamma^{t+1})$-eq-conjunction, and it remains to
show minimality. This, however, follows immediately,
because for any~$C' \subsetneq \bar C$
there exists~$x \in \xymretope\gamma^t \cap F^t \cap V(C')$
with~$x \succp{i_\gamma^t} \gamma(x)$,
so in particular~$x \succp{i_\gamma^{t+1}} \gamma(x)$.
Hence,~$\bar C \in \eqconj{\gamma}{t + 1}$,
showing that~$Y \subseteq \eqconj{\gamma}{t + 1}$.

Let~$\bar C \in Z$, with representation~$C \cup S$
for~$C \in \eqconj{\gamma}{t}$,
and~$S = \{(i_\gamma^t, 0)\}$ or~$S = \{(\gamma^{-1}(i_\gamma^t), 1)\}$.
Denote $S = \{ (j, b) \}$.
Since there is~$x \in \xymretope\gamma^t \cap F^t \cap V(C)$
with~$x_{i_\gamma^t} > \gamma(x)_{i_\gamma^t}$,
$C$ is no~$i_\gamma^{t + 1}$-eq-conjunction.
By additionally applying $(j, b)$,
one enforces~$x_{i_\gamma^t} \leq \gamma(x)_{i_\gamma^t}$.
Since~$C$ is an~$i_\gamma^t$-eq-conjunction,
all~$x \in \xymretope\gamma^t \cap F^t \cap V(C \cup S)$
have~$x \eqp{i_\gamma^t} \gamma(x)$
and~$x_{i_\gamma^t} \leq \gamma(x)_{i_\gamma^t}$,
so~$x \preceqp{i_\gamma^{t + 1}} \gamma(x)$.
If~$\xymretope\gamma^t \cap F^t \cap V(C \cup S) \neq \emptyset$,
then there exists
a vector~$x \in \xymretope\gamma^t \cap F^t \cap V(C \cup S)$
with~$x \succeqp{i_\gamma^{t + 1}} \gamma(x)$
and~$x \preceqp{i_\gamma^{t + 1}} \gamma(x)$,
so~$x \eqp{i_\gamma^{t + 1}} \gamma(x)$.
This shows that~$C \cup S$ is an~$i_\gamma^{t + 1}$-eq-conjunction.
This is a minimal conjunction, by the following two arguments:
First, by the last condition there is
a vector~$x \in V(C)$
with~$x \succp{i_\gamma^{t + 1}} \gamma(x)$,
so the fixing from~$S$ cannot be removed.
Second,~$C$ is a minimal~$i_\gamma^t$-eq-conjunction,
so for any~$C' \subsetneq C$ there is
a vector~$x' \in \xymretope\gamma^t \cap F^t \cap V(C')$
with~$x' \succp{i_\gamma^t} \gamma(x')$.
Let~$x \in \binary^n$
with $x_i = x'_i$ for~$i \neq j$ and~$x_j = b$.
By the definition of $Z$ there is
a~$y \in \xymretope\gamma^t \cap F^t \cap V(C)$
with $y_{i_\gamma^t} > \gamma(y)_{i_\gamma^t}$,
implying~$(j, 1-b) \notin C'$,
so that~$x \in F(I_0^t, I_1^t) \cap V(C' \cup S)$.
Since the only difference between~$x$ and~$x'$ is entry~$j$,
a difference in the first $i_\gamma^t - 1$ entries between~$x$ and~$x'$
is only possible in the case~$(j, b) = (\gamma^{-1}(i_\gamma^t), 1)$
and~$j < i_\gamma^t$,
which makes $x_j \geq x'_j$.
Case $(j, b) = (i_\gamma^t, 0)$ does not affect the first $i_\gamma^t -1$
entries of~$x$ and~$x'$. Hence,~$x \succeqp{i_\gamma^t} x'$.
Analogously,~$\gamma(x) \preceqp{i_\gamma^t} \gamma(x')$.
Concluding, we have~$
x
\succeqp{i_\gamma^t}
x'
\succeqp{i_\gamma^t}
\gamma(x')
\succeqp{i_\gamma^t}
\gamma(x)
$,
showing the existence
of~$x \in \xymretope\gamma^{t + 1} \cap F^{t+1} \cap V(C' \cup S)$
with $x \succp{i_\gamma^t} \gamma(x)$.
Hence, no fixing from~$\bar C = C \cup S$ can be removed from the set
to find a smaller~$i_\gamma^{t + 1}$-eq-conjunction,
so~$\bar C = C \cup S \in \eqconj{\gamma}{t + 1}$,
showing that~$Z \subseteq \eqconj{\gamma}{t + 1}$.
Concluding with the previous
paragraphs~$\eqconj{\gamma}{t + 1} = Y \cup Z$ follows.
\end{proof}

\indexinfconj*
\begin{proof}
Again, for brevity we denote $\xymretope\gamma^t \define
\xymretope\gamma^{(i_\gamma^t)}$ and $F^t \define F(I_0^t, I_1^t)$.
As the event at time~$t$ is an index increasing event,
in particular $F^{t+1} = F^t$.

An index increasing event for permutation~$\gamma \in \Pi$ does not impact
any permutation but~$\gamma$,
so the update is correct for all~$\delta \in \Pi \setminus \{ \gamma \}$.
For~$\gamma$,
we prove correctness in four parts:
First,
\begin{enumerate*}[label=\textbf{\emph{\small(\roman*)}},
ref=\emph{\small(\roman*.)}]
\item
\label{prop:varfixeqconj:proof:1}
all~$\bar C \in \infconj{\gamma}{t + 1}$
that are no~$i_\gamma^t$-inf-conjunction
have~$\bar C \in \infconj{\gamma, \mathrm{eq}}{t + 1}$,
then
\item%
\label{prop:varfixeqconj:proof:2}%
$
\infconj{\gamma, \mathrm{eq}}{t + 1}
\subseteq
\infconj{\gamma}{t + 1}
$,
after which we show that
\item%
\label{prop:varfixeqconj:proof:3}%
all~$\bar C \in \infconj{\gamma}{t + 1}$
that are~$i_\gamma^t$-inf-conjunction
have~$\bar C \in \infconj{\gamma, \mathrm{inf}}{t + 1}$,
and last
\item%
\label{prop:varfixeqconj:proof:4}%
$
\infconj{\gamma, \mathrm{inf}}{t + 1}
\subseteq
\infconj{\gamma}{t + 1}
$.
\end{enumerate*}
Part~\ref{prop:varfixeqconj:proof:1} and~\ref{prop:varfixeqconj:proof:3}
show~%
$
\infconj{\gamma}{t + 1}
\subseteq \infconj{\gamma, \mathrm{eq}}{t + 1} \cup
\infconj{\gamma, \mathrm{inf}}{t + 1}
$,
and part~\ref{prop:varfixeqconj:proof:2}
and~\ref{prop:varfixeqconj:proof:4}
show the converse inclusion,
therewith proving the proposition.
Consistent with the index increasing event, we have~%
$i_\gamma^{t + 1} = i_\gamma^t + 1$,~$I_0^{t + 1} = I_0^t$,
and~$I_1^{t + 1} = I_1^t$.

To show \ref{prop:varfixeqconj:proof:1},
let~$\bar C \in \infconj{\gamma}{t + 1}$
be a minimal~$i_\gamma^{t + 1}$-inf-conjunction,
and
suppose that~$\bar C$ is no~$i_\gamma^t$-inf-conjunction.
Let~$x \in \xymretope{\gamma}^t \cap F^t \cap V(\bar C)$.
Then~$x \precp{i_\gamma^{t + 1}} \gamma(x)$
as~$\bar C$ is an~$i_\gamma^{t + 1}$-inf-conjunction,
and~$x \succeqp{i_\gamma^t} \gamma(x)$
as~$\bar C$ is no~$i_\gamma^{t}$-inf-conjunction.
Together,~$x \eqp{i_\gamma^t} \gamma(x)$ must hold,
and~$x_{i_\gamma^t} < \gamma(x)_{i_\gamma^t}$.
Hence,~$\bar C$ is an~$i_\gamma^t$-eq-conjunction.

If~$\bar C$ is a minimal~$i_\gamma^t$-eq-conjunction,
i.e.,~$\bar C \in \eqconj{\gamma}{t}$,
then for any~$x \in \xymretope\gamma^t \cap F^t \cap V(\bar C)$
holds~$0 = x_{i_\gamma^t} < \gamma(x)_{i_\gamma^t} = 1$.
Thus, the conditions of~$\infconj{\gamma, \mathrm{eq}}{t + 1}$
hold for~$C = \bar C \in \eqconj{\gamma}{t}$
and~$S = \emptyset$
(as~$x_{i_\gamma^t} = 0$, $\gamma(x)_{i_\gamma^t} = 1$
for all~$x \in \xymretope\gamma^t \cap F^t \cap V(\bar C)$),
that is~$\bar C = C \cup S \in \infconj{\gamma, \mathrm{eq}}{t + 1}$.

Otherwise, if~$\bar C$ is a non-minimal~$i_\gamma^t$-eq-conjunction,
let~$C \subsetneq \bar C$ with~$C \in \eqconj{\gamma}{t}$.
By minimality of~$\bar C \in \infconj{\gamma}{t + 1}$,
there exists an~$x \in \xymretope\gamma^{t + 1} \cap F^{t+1} \cap V(C)$
with~$x \succeqp{i_\gamma^{t + 1}} \gamma(x)$.
Since~$C \in \eqconj{\gamma}{t}$
also~$x \eqp{i_\gamma^t} \gamma(x)$.
Thus,~$x_{i_\gamma^t} \geq \gamma(x)_{i_\gamma^t}$.

Suppose that~$(i_\gamma^t, 0) \in \bar C \setminus C$.
Since~$\bar C \in \infconj{\gamma}{t + 1}$,
all~$x \in \xymretope\gamma^{t + 1} \cap F^{t+1}
\cap V(\bar C \setminus \{ (i_\gamma^t, 0) \} )$
have~$x_{i_\gamma^t} = 1$,
and by minimality also such a vector exists.
By~%
$
C \subseteq \bar C \setminus \{ (i_\gamma^t, 0) \}
\subsetneq \bar C
$
and~$\xymretope\gamma^{t + 1} \cap F^{t+1}
\subseteq \xymretope\gamma^{t} \cap F^t $,
we have~$
x \in
\xymretope\gamma^{t + 1} \cap F^{t+1}
 \cap V(\bar C \setminus \{ (i_\gamma^t, 0)\})
\subseteq
\xymretope\gamma^{t} \cap F^t \cap V(C)
$.
Hence,~$(i_\gamma^t, 0) \in \bar C \setminus C$
implies that not all~$x \in \xymretope\gamma^t \cap F^t \cap V(C)$
have~$x_{i_\gamma^t} = 0$.
Conversely, suppose that all~%
$x \in \xymretope\gamma^t \cap F^t \cap V(C)$
have~$x_{i_\gamma^t} = 0$.
If~$(i_\gamma, 0) \in \bar C \setminus C$,
then
by~$\bar C \in \infconj{\gamma}{t + 1}$
there is an~$x
\in \xymretope\gamma^{t + 1} \cap F^{t+1}
\cap V(\bar C \setminus \{ (i_\gamma, 0) \})$
with~$x_{i_\gamma^t} = 1$,
which contradicts~%
$
x \in \xymretope\gamma^{t + 1} \cap F^{t+1}
\cap V(\bar C \setminus \{ (i_\gamma, 0) \})
\subseteq
\xymretope\gamma^t \cap F^t \cap V(C)
$.
Hence, together with the previous part,
we find that either~$(i_\gamma^t, 0) \in \bar C \setminus C$
or that all~$x \in \xymretope\gamma^t \cap F^t \cap V(C)$
have~$x_{i_\gamma^t} = 0$.
Analogously one can show that either~%
$(\gamma^{-1}(i_\gamma^t), 1) \in \bar C \setminus C$,
or that~$\gamma(x)_{i_\gamma^t} = 1$
for all~$x \in \xymretope\gamma^t \cap F^t \cap V(C)$.

Consistent with the definition of~%
$\infconj{\gamma, \mathrm{eq}}{t + 1}$,
let~$S \subseteq \{ (i_\gamma^t, 0), (\gamma^{-1}(i_\gamma^t), 1) \}$
with~$(i_\gamma^t, 0) \in S$
if there is an~$x \in \xymretope\gamma^t \cap F^t \cap V(C)$
with~$x_{i_\gamma^t} = 1$,
and~$(\gamma^{-1}(i_\gamma^t), 1) \in S$
if there is an~$x \in \xymretope\gamma^t \cap F^t \cap V(C)$
with~$\gamma(x)_{i_\gamma^t} = 0$.
Then~$C \cup S$ is an~$i_\gamma^{t + 1}$-inf-conjunction,
because~$C$ is an~$i_\gamma^t$-eq-conjunction,
and the fixings from~$S$ additionally ensure
that~$x_{i_\gamma^t} < \gamma(x)_{i_\gamma^t}$
for all~$x \in \xymretope\gamma^t \cap F^t \cap V(C \cup S)$.
As a result of the previous paragraph, we thus have
that~$f_0 \define (i_\gamma^t, 0) \in S$
if and only if~$f_0 \in \bar C \setminus C$,
and analogously
that~$f_1 \define (\gamma^{-1}(i_\gamma^t), 1) \in S$
if and only if~$f_1 \in \bar C \setminus C$.
Hence,~$S \subseteq \bar C \setminus C$,
showing that~$C \cup S \subseteq \bar C$.
Since~$\bar C$
is a minimal~$i_\gamma^{t + 1}$-inf-conjunction
and~$C \cup S$ is a~$i_\gamma^{t + 1}$-inf-conjunction
it must thus hold that~$\bar C = C \cup S$.
This shows that~$\bar C \in \infconj{\gamma, \mathrm{eq}}{t + 1}$.

To prove \ref{prop:varfixeqconj:proof:2},
let~$\bar C \in \infconj{\gamma, \mathrm{eq}}{t + 1}$
be represented by~$C \cup S$
for~$C \in \eqconj{\gamma}{t}$,
and~$S \subseteq \{ (i_\gamma^t, 0), (\gamma^{-1}(i_\gamma^t), 1) \}$,
where~$S$ is consistent with the definition.
Since~$C \in \eqconj{\gamma}{t}$,
all~$x \in \xymretope\gamma^{t} \cap F^t \cap V(C \cup S)$
have~$x \eqp{i_\gamma^t} \gamma(x)$,
and by the definition of~$S$,
also~$x_{i_\gamma^t} < \gamma(x)_{i_\gamma^t}$.
Hence, all~$x \in \xymretope\gamma^{t} \cap F^t \cap V(C \cup S)$
have~$x \precp{i_\gamma^{t + 1}} \gamma(x)$,
such that~%
$\xymretope\gamma^{t + 1} \cap F^{t+1} \cap V(C \cup S) = \emptyset$,
meaning that~$\bar C = C \cup S$ is an~$i_\gamma^{t + 1}$-inf-conjunction.
It is also minimal, since removing any element from~$\bar C$
yields a non-empty
set~$\xymretope\gamma^{t+1} \cap F^{t+1} \cap V(\bar C)$,
as we show next.
Note that, by the definition of~$S$, that~$C$ and~$S$ are non-overlapping.
Consider~$f \in C$.
Since~$C \in \eqconj{\gamma}{t}$, there is a vector~%
$x \in \xymretope\gamma^t \cap F^t \cap V(C \setminus \{ f \})$
with~$x \succp{i_\gamma^t} \gamma(x)$.
If~$x \notin V(S)$,
one can construct~%
$y \in \xymretope\gamma^t \cap F^t \cap V((C \cup S) \setminus \{ f \})$
by copying~$x$ and applying the fixings of~$S$.
The application of fixing~$(i_\gamma^t, 0)$ does not affect the first~%
$i_\gamma^t - 1$ entries of~$x$,
and can only make vector~$\gamma(x)$
partially lexicographically smaller up to~$i_\gamma^t$,
as it changes a~$1$-entry to a~$0$-entry.
Likewise, the application of~$(\gamma^{-1}(i_\gamma^t), 0)$ does not
affect the first~$i_\gamma^t - 1$ entries of~$\gamma(x)$
and can only make vector~$x$
partially lexicographically larger up to~$i_\gamma^t$.
Hence,~$
y
\succeqp{i_\gamma^t} x
\succp{i_\gamma^t} \gamma(x)
\succeqp{i_\gamma^t} \gamma(y)
$,
showing that~$y \succp{i_\gamma^t} \gamma(y)$,
so in particular~%
$y \succp{i_\gamma^{t + 1}} \gamma(y)$,
meaning that~%
$y \in \xymretope\gamma^{t + 1} \cap F^{t+1}
\cap V((C \cup S) \setminus \{ f \})$.
Conversely, consider~$f \in S$,
and by analogy between the two possibilities,
suppose without loss of generality~$f = (i_\gamma^t, 0) \in S$.
By the definition of~$S$,
there exists~$x \in \xymretope\gamma^t \cap F^t \cap V(C)$
with~$x \eqp{i_\gamma^t} \gamma(x)$ and~$x_{i_\gamma^t} = 1$,
that is~$x \succeqp{i_\gamma^{t + 1}} \gamma(x)$.
Also here, if~$x \notin V(S \setminus \{ f \})$,
one can apply the fixing from~$S \setminus \{ f \}$
and find~%
$y \in \xymretope\gamma^{t + 1} \cap F^{t+1}
\cap V((C \cup S) \setminus \{ f \})$
with~$y \succeqp{i_\gamma^{t + 1}} \gamma(y)$.
Hence, no~$i_\gamma^{t + 1}$-inf-conjunction could be found by dropping
any fixing from~$\bar C = C \cup S$,
showing that~$\bar C \in \infconj{\gamma}{t + 1}$,
and thus~%
$\infconj{\gamma, \mathrm{eq}}{t + 1} \subseteq \infconj{\gamma}{t + 1}$.

Next, to show \ref{prop:varfixeqconj:proof:3},
let~$\bar C \in \infconj{\gamma}{t + 1}$
be a minimal~$i_\gamma^{t + 1}$-inf-conjunction
that is also a~$i_\gamma^{t}$-inf-conjunction.
Since~$\bar C$ is a minimal~$i_\gamma^{t + 1}$-inf-conjunction,
for all~$C \subsetneq \bar C$
there exists~$x \in F(I_0^t, I_1^t) \cap V(C)$
with~$x \succeqp{i_\gamma^{t + 1}} \gamma(x)$,
so especially with~$x \succeqp{i_\gamma^{t}} \gamma(x)$.
This shows that~$\bar C$ is also a minimal~$i_\gamma^t$-inf-conjunction,
meaning that~$\bar C \in \infconj{\gamma}{t}$.
Since by \ref{prop:varfixeqconj:proof:2} we have~%
$\infconj{\gamma, \mathrm{eq}}{t + 1} \subseteq \infconj{\gamma}{t + 1}$,
a contradiction with minimality of~$\bar C$ at time~$t + 1$
would follow if there was a~%
$C \in \infconj{\gamma, \mathrm{eq}}{t + 1}$
with~$C \subsetneq \bar C$.
Hence,~$\bar C \in \infconj{\gamma, \mathrm{inf}}{t + 1}$.
The combination of part~\ref{prop:varfixeqconj:proof:1}
and~\ref{prop:varfixeqconj:proof:3} shows~$
\infconj{\gamma}{t + 1}
\subseteq
\infconj{\gamma, \mathrm{eq}}{t + 1}
\cup
\infconj{\gamma, \mathrm{inf}}{t + 1}
$.

Last, to show part \ref{prop:varfixeqconj:proof:4},
let~$\bar C \in \infconj{\gamma, \mathrm{inf}}{t + 1}$.
Then~$\bar C \in \infconj{\gamma}{t}$
so it is a minimal~$i_\gamma^t$-inf-conjunction.
In particular it is a~$i_\gamma^{t + 1}$-inf-conjunction.
Suppose that it is no minimal~$i_\gamma^{t + 1}$-inf-conjunction.
Then there is a~$C \in \infconj{\gamma}{t + 1}$
with~$C \subsetneq \bar C$.
By the previous parts,~%
$C \in \infconj{\gamma, \mathrm{eq}}{t + 1}
\cup \infconj{\gamma, \mathrm{inf}}{t + 1}$.
If~$C \in \infconj{\gamma, \mathrm{inf}}{t + 1}$,
then~$C \in \infconj{\gamma}{t}$, and that contradicts
minimality of~$\bar C \in \infconj{\gamma}{t}$.
If~$C \in \infconj{\gamma, \mathrm{eq}}{t + 1}$,
then this contradicts
the definition of~$\bar C \in \infconj{\gamma, \mathrm{inf}}{t + 1}$.
Since either case yields a contradiction,~%
$\bar C$ must be a minimal~$i_\gamma^{t + 1}$-inf-conjunction,
meaning that~$\bar C \in \infconj{\gamma}{t + 1}$,
proving~$\infconj{\gamma, \mathrm{inf}}{t + 1}
\subseteq \infconj{\gamma}{t + 1}$.
\end{proof}

\varfixinfconj*
\begin{proof}
Denote the right-hand side of Equation~\eqref{eq:varfixinfconj}
by~$H$.
We prove the correctness of Equation~\eqref{eq:varfixinfconj}
by showing~$\infconj{\gamma}{t + 1} \subseteq H$
and~$\infconj{\gamma}{t + 1} \supseteq H$, respectively.

Let~$\bar C \in \infconj{\gamma}{t + 1}$.
As fixing~$f$ is applied and~$\bar C$ is minimal,~$f \notin \bar C$.
On the one hand,
suppose that~$\bar C$ is an~$i_\gamma^t$-inf-conjunction at time~$t$.
It is necessarily minimal, as otherwise there exists
a~$C \in \infconj{\gamma}{t}$ with~$C \subsetneq \bar C$.
$C$ is also an~$i_\gamma^{t}$-inf-conjunction at time~$t + 1$,
contradicting minimality of $\bar C$.
Hence, $\bar C \in \infconj{\gamma}{t}$.
Suppose that~$C' \in \infconj{\gamma}{t}$ with~%
$C' \setminus \{ f \} \subsetneq \bar C \setminus \{ f \} = \bar C$.
Then~$C' \setminus \{ f \}$ is an~$i_\gamma^t$-inf-conjunction
at time~$t + 1$, and inclusionwise smaller than~$\bar C$,
contradicting $\bar C \in \infconj{\gamma}{t + 1}$.
Consequently, $\bar C \setminus \{ f \} = \bar C \in H$.

On the other hand, suppose that~$\bar C$
is no~$i_\gamma^t$-inf-conjunction at time~$t$.
As the only difference from time~$t$ to~$t + 1$ is the application of
fixing~$f$,~$C \define \bar C \cup \{ f \}$
is an~$i_\gamma^t$-inf-conjunction at time~$t$.
This is also minimal,
since~$\bar C$ is no~$i_\gamma^t$-inf-conjunction at time~$t$,
meaning that~$f$ cannot be removed from~$C$,
and~$\bar C$ is a minimal~$i_\gamma^t$-inf-conjunction at time~$t + 1$,
meaning that no element from~$\bar C$ can be removed from~$C$, as well.
Hence,~$C \in \infconj{\gamma}{t}$.
Now suppose that~$C' \in \infconj{\gamma}{t}$
with~$C' \setminus \{ f \} \subsetneq C \setminus \{ f \} = \bar C$.
Then~$C' \setminus \{ f \}$ is an~$i_\gamma^t$-inf-conjunction
at time~$t + 1$, which violates the minimality
of~$\bar C \in \infconj{\gamma}{t + 1}$.
Concluding,~$C \setminus \{ f \} = \bar C \in H$.
Together this proves that~$\infconj{\gamma}{t + 1} \subseteq H$.

Finally, to show the reverse inclusion, let~$\bar C \in H$
represented by~$C \setminus \{ f \}$ for some~$C \in \infconj{\gamma}{t}$.
Since~$C$ is an~$i_\gamma^t$-inf-conjunction at time~$t$,
so is~$\bar C$ one at time~$t + 1$.
Hence, if~$\bar C \notin \infconj{\gamma}{t + 1}$,
there exists a~$\bar C' \in \infconj{\gamma}{t + 1}$
with~$\bar C' \subsetneq \bar C$.
By the first part of the proof,
there exists~$C' \in \infconj{\gamma}{t}$
with~$\bar C' = C' \setminus \{ f \}$,
contradicting minimality of $C$.
Consequently, $\bar C \in \infconj{\gamma}{t + 1}$
showing $\infconj{\gamma}{t + 1} \supseteq H$.
\end{proof}

\varfixeqconj*
\begin{proof}
Denote the right-hand side of Equation~\eqref{eq:varfixeqconj} by~$H$.
We prove the correctness of Equation~\eqref{eq:varfixeqconj} by
showing~$\eqconj{\gamma}{t+1} \subseteq H$
and~$\eqconj{\gamma}{t+1} \supseteq H$, respectively.

First, let~$\bar C \in \eqconj{\gamma}{t + 1}$.
Suppose we have already established the existence
of~$C \in \eqconj{\gamma}{t}$ with~$\bar C = C \setminus \{ f \}$.
We show that in this case the conditions of the right-hand side
of Equation~\eqref{eq:varfixeqconj} hold.
If there is a~$C' \in \eqconj{\gamma}{t}$
with~$C' \setminus \{ f \} \subsetneq C \setminus \{ f \} = \bar C$,
then~$C' \setminus \{ f \}$ is also an~$i_\gamma^t$-eq-conjunction at
time~$t + 1$, which is not possible as~$\bar C$ is inclusionwise minimal.
Likewise, if there is~$C' \in \infconj{\gamma}{t}$
with~$C' \setminus \{ f \} \subseteq C \setminus \{ f \} = \bar C$,
then~$\bar C$ is both an~$i_\gamma^t$-inf-conjunction at time~$t + 1$
and an~$i_\gamma^t$-eq-conjunction at time~$t + 1$,
which is not possible.

We turn the missing proof of existence of~$C \in \eqconj{\gamma}{t}$.
On the one hand, suppose that~$\bar C$ is an~$i_\gamma^t$-eq-conjunction
at time~$t$.
Analogous to the proof of Proposition~\ref{prop:varfixinfconj},
we conclude that~$\bar C$ is necessarily a minimal
$i_\gamma^t$-eq-conjunction, so~$\bar C \in \eqconj{\gamma}{t}$.
On the other hand,
suppose that~$\bar C$ is no~$i_\gamma^t$-eq-conjunction at time~$t$.
Then~$C \define \bar C \cup \{ f \}$ is an~$i_\gamma^t$-eq-conjunction
at time~$t$.
This is minimal, as removing~$f$ from~$C$ yields~$\bar C$, which is
no~$i_\gamma^t$-eq-conjunction at time~$t$,
and if a valid~$i_\gamma^t$-eq-conjunction at time~$t$ is found when
removing any element from~$\bar C$, then this would violate minimality
of~$\bar C$ at time~$t + 1$.
Hence,~${C \in \eqconj{\gamma}{t}}$.
So in both cases there is a~$C \in \eqconj{\gamma}{t}$
with~$\bar C = C \setminus \{ f \}$.
Consequently $\eqconj{\gamma}{t + 1} \subseteq H$.

Second and last, to show the reverse inclusion,
let~$\bar C \in H$ represented
by~$C \setminus \{ f \}$ for some~$C \in \eqconj{\gamma}{t}$.
As~$C$ is an~$i_\gamma^t$-eq-conjunction at time~$t$,
also~$\bar C$ is an~$i_\gamma^t$-eq-conjunction at time~$t + 1$.
Suppose that~$\bar C \notin \eqconj{\gamma}{t + 1}$,
meaning that minimality does not hold.
Then there is~$\bar C' \in \eqconj{\gamma}{t + 1}$
with~$\bar C' \subsetneq \bar C$.
By~$\eqconj{\gamma}{t + 1} \subseteq H$,
we have that~$\bar C' = C' \setminus \{ f \}$
for some~$C' \in \eqconj{\gamma}{t}$.
But then~%
$C' \setminus \{ f \} = \bar C' \subsetneq \bar C = C \setminus \{ f \}$
contradicts~$\bar C \in H$.
Hence,~$\bar C \in \eqconj{\gamma}{t + 1}$,
and thus~$\eqconj{\gamma}{t + 1} \supseteq H$.
\end{proof}

\clearpage
\section{Supplements}
\label{app:supplements}

This appendix provides the per-instance results of the experiments
described in  Section~\ref{sec:computational}.

\begin{table}[h]
\caption{3-edge coloring on flower snark graphs $J_n$, original
relabeling.}
\scriptsize
\centering
\begin{tabular}{l*{10}{rr}}
\toprule
& \multicolumn{2}{c}{\tt nosym}& \multicolumn{2}{c}{\tt gen}&
\multicolumn{2}{c}{\tt group}& \multicolumn{2}{c}{\tt nopeek}&
\multicolumn{2}{c}{\tt peek}\\
\cmidrule(l{1pt}r{1pt}){2-3}
\cmidrule(l{1pt}r{1pt}){4-5}
\cmidrule(l{1pt}r{1pt}){6-7}
\cmidrule(l{1pt}r{1pt}){8-9}
\cmidrule(l{1pt}r{1pt}){10-11}
instance & time(s) & S & time(s) & S & time(s) & S & time(s) & S & time(s)
& S \\
\midrule
3                    &     0.01 &  5 &     0.00 &  5 &     0.00 &  5 &
0.00 &  5 &     0.00 &  5 \\
5                    &     0.02 &  5 &     0.01 &  5 &     0.01 &  5 &
0.01 &  5 &     0.01 &  5 \\
7                    &     0.37 &  5 &     0.03 &  5 &     0.03 &  5 &
0.03 &  5 &     0.03 &  5 \\
9                    &     0.65 &  5 &     0.20 &  5 &     0.19 &  5 &
0.17 &  5 &     0.17 &  5 \\
11                   &     2.68 &  5 &     0.58 &  5 &     0.53 &  5 &
0.55 &  5 &     0.55 &  5 \\
13                   &    16.51 &  5 &     1.39 &  5 &     1.21 &  5 &
1.32 &  5 &     1.34 &  5 \\
15                   &    48.17 &  5 &     5.29 &  5 &     5.11 &  5 &
4.68 &  5 &     4.86 &  5 \\
17                   &    95.36 &  5 &    15.98 &  5 &    13.15 &  5 &
12.08 &  5 &    12.32 &  5 \\
19                   &   198.58 &  5 &    30.62 &  5 &    20.92 &  5 &
28.42 &  5 &    29.56 &  5 \\
21                   &   339.21 &  5 &    51.57 &  5 &    41.70 &  5 &
40.66 &  5 &    40.84 &  5 \\
23                   &  3141.73 &  4 &    52.46 &  5 &    59.23 &  5 &
58.71 &  5 &    57.97 &  5 \\
25                   &  7200.00 &  0 &    71.86 &  5 &    73.22 &  5 &
86.04 &  5 &    86.84 &  5 \\
27                   &  7200.00 &  0 &    81.67 &  5 &   110.99 &  5 &
132.82 &  5 &   134.37 &  5 \\
29                   &  7200.00 &  0 &   290.86 &  5 &   317.28 &  5 &
294.04 &  5 &   229.28 &  5 \\
31                   &  7200.00 &  0 &   297.63 &  5 &   348.92 &  5 &
423.39 &  5 &   425.73 &  5 \\
33                   &  7200.00 &  0 &  1659.06 &  2 &   708.61 &  4 &
324.87 &  5 &   349.46 &  5 \\
35                   &  7200.00 &  0 &  1177.03 &  4 &  3689.75 &  2 &
1834.47 &  3 &  1356.33 &  3 \\
37                   &  7200.00 &  0 &  1588.07 &  3 &  3603.63 &  2 &
3701.40 &  2 &  2523.97 &  2 \\
39                   &  7200.00 &  0 &  2122.42 &  2 &  3375.47 &  1 &
3797.39 &  1 &  3277.88 &  2 \\
41                   &  7200.00 &  0 &  3798.78 &  1 &  2679.75 &  2 &
5067.89 &  1 &  1923.04 &  3 \\
43                   &  7200.00 &  0 &  3553.34 &  1 &  6878.06 &  1 &
5734.13 &  2 &  6885.16 &  1 \\
45                   &  7200.00 &  0 &  7200.00 &  0 &  7200.00 &  0 &
4892.76 &  1 &  2481.88 &  3 \\
47                   &  7200.00 &  0 &  7200.00 &  0 &  7200.00 &  0 &
2322.81 &  2 &  2507.41 &  2 \\
49                   &  7200.00 &  0 &  7200.00 &  0 &  7200.00 &  0 &
4666.12 &  1 &  4616.06 &  1 \\
\cmidrule{2-11}
All instances combined                      &   730.78 & 54 &   172.35 &
88 &   187.56 & 87 &   169.79 & 93 &   153.23 & 97 \\
Total time                                  &
\multicolumn{2}{r}{136:26:10     } & \multicolumn{2}{r}{ 69:19:48     } &
\multicolumn{2}{r}{ 74:10:26     } & \multicolumn{2}{r}{ 63:51:23     } &
\multicolumn{2}{r}{ 54:15:29     } \\
Symmetry time                               & \multicolumn{2}{r}{
0:00:00     } & \multicolumn{2}{r}{  1:37:28     } & \multicolumn{2}{r}{
2:18:15     } & \multicolumn{2}{r}{  2:20:04     } & \multicolumn{2}{r}{
9:03:07     } \\
Percentage time                             & \multicolumn{2}{r}{
0.0\%} & \multicolumn{2}{r}{         2.3\%} & \multicolumn{2}{r}{
3.1\%} & \multicolumn{2}{r}{         3.7\%} & \multicolumn{2}{r}{
16.7\%} \\
\bottomrule
\end{tabular}
\end{table}

\begin{table}[h]
\caption{3-edge coloring on flower snark graphs $J_n$,
{\tt max}-relabeling.}
\scriptsize
\centering
\begin{tabular}{l*{10}{rr}}
\toprule
& \multicolumn{2}{c}{\tt nosym}& \multicolumn{2}{c}{\tt gen}&
\multicolumn{2}{c}{\tt group}& \multicolumn{2}{c}{\tt nopeek}&
\multicolumn{2}{c}{\tt peek}\\
\cmidrule(l{1pt}r{1pt}){2-3}
\cmidrule(l{1pt}r{1pt}){4-5}
\cmidrule(l{1pt}r{1pt}){6-7}
\cmidrule(l{1pt}r{1pt}){8-9}
\cmidrule(l{1pt}r{1pt}){10-11}
instance & time(s) & S & time(s) & S & time(s) & S & time(s) & S & time(s)
& S \\
\midrule
3                    & --       &       &     0.00 &  5 &     0.00 &  5
&     0.00 &  5 &     0.00 &  5 \\
5                    & --       &       &     0.02 &  5 &     0.01 &  5
&     0.01 &  5 &     0.01 &  5 \\
7                    & --       &       &     0.05 &  5 &     0.04 &  5
&     0.04 &  5 &     0.04 &  5 \\
9                    & --       &       &     0.44 &  5 &     0.34 &  5
&     0.35 &  5 &     0.34 &  5 \\
11                   & --       &       &     0.80 &  5 &     0.80 &  5
&     0.62 &  5 &     0.58 &  5 \\
13                   & --       &       &     2.52 &  5 &     1.58 &  5
&     1.29 &  5 &     1.35 &  5 \\
15                   & --       &       &     8.88 &  5 &     4.48 &  5
&     4.92 &  5 &     4.22 &  5 \\
17                   & --       &       &    28.51 &  5 &    12.36 &  5
&    11.17 &  5 &    10.82 &  5 \\
19                   & --       &       &    60.61 &  5 &    31.00 &  5
&    35.13 &  5 &    30.12 &  5 \\
21                   & --       &       &    95.28 &  5 &    55.67 &  5
&    52.78 &  5 &    52.47 &  5 \\
23                   & --       &       &   143.54 &  5 &   108.25 &  5
&    84.91 &  5 &   108.19 &  5 \\
25                   & --       &       &   215.28 &  5 &   122.76 &  5
&   170.44 &  5 &   140.46 &  5 \\
27                   & --       &       &  1511.94 &  3 &   340.70 &  5
&   211.61 &  5 &   290.88 &  5 \\
29                   & --       &       &  4975.37 &  2 &  1180.29 &  3
&   529.31 &  5 &   481.28 &  5 \\
31                   & --       &       &  7200.00 &  0 &  6579.03 &  1 &
1922.91 &  4 &  1381.55 &  3 \\
33                   & --       &       &  7200.00 &  0 &  7200.00 &  0 &
3435.54 &  2 &  4207.49 &  2 \\
35                   & --       &       &  7200.00 &  0 &  4618.61 &  1 &
6872.39 &  1 &  4162.52 &  2 \\
37                   & --       &       &  7200.00 &  0 &  7200.00 &  0 &
7200.00 &  0 &  6535.44 &  1 \\
\multicolumn{11}{c}{\emph{[Results
for~$n \in \{39, \dots, 49\}$ are omitted, since no setting could
solve the instance within the time limit.]}} \\
\cmidrule{2-11}
All instances combined                      & --       &       &   407.02
& 65 &   312.93 & 70 &   278.00 & 77 &   270.19 & 78 \\
Total time                                  & --            & &
\multicolumn{2}{r}{113:32:08     } & \multicolumn{2}{r}{102:47:17     } &
\multicolumn{2}{r}{ 94:51:24     } & \multicolumn{2}{r}{ 90:47:44     } \\
Symmetry time                               & --            & &
\multicolumn{2}{r}{  2:41:02     } & \multicolumn{2}{r}{  3:31:28     } &
\multicolumn{2}{r}{  3:02:04     } & \multicolumn{2}{r}{ 16:23:01     } \\
Percentage time                             & --          & &
\multicolumn{2}{r}{         2.4\%} & \multicolumn{2}{r}{         3.4\%} &
\multicolumn{2}{r}{         3.2\%} & \multicolumn{2}{r}{        18.0\%} \\
\bottomrule
\end{tabular}
\end{table}

\begin{table}[h]
\caption{3-edge coloring on flower snark graphs $J_n$,
{\tt min}-relabeling.}
\scriptsize
\centering
\begin{tabular}{l*{10}{rr}}
\toprule
& \multicolumn{2}{c}{\tt nosym}& \multicolumn{2}{c}{\tt gen}&
\multicolumn{2}{c}{\tt group}& \multicolumn{2}{c}{\tt nopeek}&
\multicolumn{2}{c}{\tt peek}\\
\cmidrule(l{1pt}r{1pt}){2-3}
\cmidrule(l{1pt}r{1pt}){4-5}
\cmidrule(l{1pt}r{1pt}){6-7}
\cmidrule(l{1pt}r{1pt}){8-9}
\cmidrule(l{1pt}r{1pt}){10-11}
instance & time(s) & S & time(s) & S & time(s) & S & time(s) & S & time(s)
& S \\
\midrule
3                    & --       &       &     0.00 &  5 &     0.00 &  5
&     0.00 &  5 &     0.00 &  5 \\
5                    & --       &       &     0.01 &  5 &     0.01 &  5
&     0.02 &  5 &     0.02 &  5 \\
7                    & --       &       &     0.03 &  5 &     0.03 &  5
&     0.03 &  5 &     0.03 &  5 \\
9                    & --       &       &     0.05 &  5 &     0.05 &  5
&     0.07 &  5 &     0.07 &  5 \\
11                   & --       &       &     0.69 &  5 &     0.55 &  5
&     0.56 &  5 &     0.55 &  5 \\
13                   & --       &       &     1.22 &  5 &     1.38 &  5
&     1.14 &  5 &     1.14 &  5 \\
15                   & --       &       &     2.52 &  5 &     1.96 &  5
&     1.59 &  5 &     1.61 &  5 \\
17                   & --       &       &     7.77 &  5 &     5.17 &  5
&     3.99 &  5 &     3.38 &  5 \\
19                   & --       &       &    19.32 &  5 &    13.54 &  5
&    14.61 &  5 &    14.21 &  5 \\
21                   & --       &       &    28.24 &  5 &    31.52 &  5
&    28.03 &  5 &    28.31 &  5 \\
23                   & --       &       &    51.41 &  5 &    46.26 &  5
&    35.49 &  5 &    38.36 &  5 \\
25                   & --       &       &    73.62 &  5 &    62.88 &  5
&    61.79 &  5 &    65.63 &  5 \\
27                   & --       &       &    73.12 &  5 &    83.37 &  5
&    85.86 &  5 &    88.03 &  5 \\
29                   & --       &       &    99.74 &  5 &    95.35 &  5
&   112.66 &  5 &   117.75 &  5 \\
31                   & --       &       &   167.98 &  5 &   218.68 &  5
&   145.33 &  5 &   125.67 &  5 \\
33                   & --       &       &   179.86 &  5 &   250.65 &  5
&   264.39 &  5 &   192.32 &  5 \\
35                   & --       &       &   280.75 &  5 &   441.17 &  5
&   241.95 &  5 &   399.40 &  5 \\
37                   & --       &       &   233.34 &  5 &  1124.95 &  3 &
1685.18 &  2 &   863.11 &  4 \\
39                   & --       &       &  1333.45 &  3 &  4479.43 &  1 &
3955.85 &  1 &  5817.81 &  1 \\
41                   & --       &       &  1278.07 &  3 &  4517.44 &  1 &
7200.00 &  0 &  2239.94 &  2 \\
43                   & --       &       &  3865.27 &  1 &  4249.17 &  2 &
3396.23 &  1 &  3527.04 &  1 \\
45                   & --       &       &  1519.78 &  3 &  7200.00 &  0 &
3385.45 &  2 &  4479.20 &  1 \\
47                   & --       &       &  4357.15 &  1 &  3742.17 &  2 &
7200.00 &  0 &  4414.05 &  1 \\
49                   & --       &       &  4161.40 &  1 &  6796.76 &  1 &
6900.11 &  1 &  7200.00 &  0 \\
\cmidrule{2-11}
All instances combined                      & --       &       &    97.99
& 102 &   131.58 & 95 &   127.48 & 92 &   119.67 & 95 \\
Total time                                  & --            & &
\multicolumn{2}{r}{ 39:18:45     } & \multicolumn{2}{r}{ 58:01:08     } &
\multicolumn{2}{r}{ 60:22:17     } & \multicolumn{2}{r}{ 55:16:02     } \\
Symmetry time                               & --            & &
\multicolumn{2}{r}{  1:07:30     } & \multicolumn{2}{r}{  2:06:51     } &
\multicolumn{2}{r}{  2:02:49     } & \multicolumn{2}{r}{  3:48:44     } \\
Percentage time                             & --          & &
\multicolumn{2}{r}{         2.9\%} & \multicolumn{2}{r}{         3.6\%} &
\multicolumn{2}{r}{         3.4\%} & \multicolumn{2}{r}{         6.9\%} \\
\bottomrule
\end{tabular}
\end{table}

\begin{table}[h]
\caption{3-edge coloring on flower snark graphs $J_n$,
{\tt respect}-relabeling.}
\scriptsize
\centering
\begin{tabular}{l*{10}{rr}}
\toprule
& \multicolumn{2}{c}{\tt nosym}& \multicolumn{2}{c}{\tt gen}&
\multicolumn{2}{c}{\tt group}& \multicolumn{2}{c}{\tt nopeek}&
\multicolumn{2}{c}{\tt peek}\\
\cmidrule(l{1pt}r{1pt}){2-3}
\cmidrule(l{1pt}r{1pt}){4-5}
\cmidrule(l{1pt}r{1pt}){6-7}
\cmidrule(l{1pt}r{1pt}){8-9}
\cmidrule(l{1pt}r{1pt}){10-11}
instance & time(s) & S & time(s) & S & time(s) & S & time(s) & S & time(s)
& S \\
\midrule
3                    & --       &       &     0.00 &  5 &     0.00 &  5
&     0.00 &  5 &     0.00 &  5 \\
5                    & --       &       &     0.01 &  5 &     0.01 &  5
&     0.01 &  5 &     0.02 &  5 \\
7                    & --       &       &     0.03 &  5 &     0.03 &  5
&     0.03 &  5 &     0.03 &  5 \\
9                    & --       &       &     0.21 &  5 &     0.23 &  5
&     0.21 &  5 &     0.21 &  5 \\
11                   & --       &       &     0.59 &  5 &     0.50 &  5
&     0.51 &  5 &     0.50 &  5 \\
13                   & --       &       &     1.06 &  5 &     1.39 &  5
&     1.00 &  5 &     0.99 &  5 \\
15                   & --       &       &     3.93 &  5 &     2.56 &  5
&     3.17 &  5 &     3.26 &  5 \\
17                   & --       &       &     9.01 &  5 &     7.51 &  5
&     8.36 &  5 &     7.75 &  5 \\
19                   & --       &       &    23.34 &  5 &    15.71 &  5
&    16.11 &  5 &    16.60 &  5 \\
21                   & --       &       &    48.78 &  5 &    30.36 &  5
&    30.54 &  5 &    33.10 &  5 \\
23                   & --       &       &    66.14 &  5 &    47.37 &  5
&    45.44 &  5 &    44.85 &  5 \\
25                   & --       &       &    85.33 &  5 &    66.92 &  5
&    73.00 &  5 &    74.54 &  5 \\
27                   & --       &       &    94.18 &  5 &    99.08 &  5
&    92.91 &  5 &   112.44 &  5 \\
29                   & --       &       &   150.11 &  5 &   167.78 &  5
&   157.78 &  5 &   232.94 &  5 \\
31                   & --       &       &   315.77 &  5 &   208.05 &  5
&   399.83 &  5 &   226.83 &  5 \\
33                   & --       &       &  1130.03 &  4 &   882.55 &  5
&   599.91 &  5 &   662.99 &  5 \\
35                   & --       &       &  1401.11 &  5 &  1723.64 &  3 &
3068.62 &  3 &  1429.65 &  4 \\
37                   & --       &       &  7200.00 &  0 &  6042.10 &  1 &
1375.05 &  3 &  4953.76 &  2 \\
39                   & --       &       &  5072.36 &  1 &  3873.88 &  1 &
5507.11 &  1 &  4105.52 &  1 \\
41                   & --       &       &  5875.18 &  1 &  4071.27 &  1 &
7200.00 &  0 &  7200.00 &  0 \\
43                   & --       &       &  4127.42 &  1 &  7200.00 &  0 &
7200.00 &  0 &  7200.00 &  0 \\
45                   & --       &       &  7200.00 &  0 &  7200.00 &  0 &
7200.00 &  0 &  7200.00 &  0 \\
47                   & --       &       &  4642.87 &  1 &  7200.00 &  0 &
6160.08 &  1 &  7200.00 &  0 \\
49                   & --       &       &  7200.00 &  0 &  7200.00 &  0 &
7200.00 &  0 &  7200.00 &  0 \\
\cmidrule{2-11}
All instances combined                      & --       &       &   184.44
& 88 &   173.41 & 86 &   174.32 & 88 &   178.46 & 87 \\
Total time                                  & --            & &
\multicolumn{2}{r}{ 71:53:32     } & \multicolumn{2}{r}{ 72:39:08     } &
\multicolumn{2}{r}{ 70:44:12     } & \multicolumn{2}{r}{ 72:09:40     } \\
Symmetry time                               & --            & &
\multicolumn{2}{r}{  1:53:37     } & \multicolumn{2}{r}{  2:27:03     } &
\multicolumn{2}{r}{  2:24:02     } & \multicolumn{2}{r}{  4:13:49     } \\
Percentage time                             & --          & &
\multicolumn{2}{r}{         2.6\%} & \multicolumn{2}{r}{         3.4\%} &
\multicolumn{2}{r}{         3.4\%} & \multicolumn{2}{r}{         5.9\%} \\
\bottomrule
\end{tabular}
\end{table}

\begin{table}[h]
\caption{Relevant MIPLIB 2010 and MIPLIB 2017 benchmark instances,
original relabeling.}
\scriptsize
\centering
\begin{tabular}{l*{10}{rr}}
\toprule
& \multicolumn{2}{c}{\tt nosym}& \multicolumn{2}{c}{\tt gen}&
\multicolumn{2}{c}{\tt group}& \multicolumn{2}{c}{\tt nopeek}&
\multicolumn{2}{c}{\tt peek}\\
\cmidrule(l{1pt}r{1pt}){2-3}
\cmidrule(l{1pt}r{1pt}){4-5}
\cmidrule(l{1pt}r{1pt}){6-7}
\cmidrule(l{1pt}r{1pt}){8-9}
\cmidrule(l{1pt}r{1pt}){10-11}
instance & time(s) & S & time(s) & S & time(s) & S & time(s) & S & time(s)
& S \\
\midrule
cod105               &  7200.04 &  0 &    65.94 &  5 &    61.84 &  5 &
59.14 &  5 &    57.45 &  5 \\
cov1075              &  4761.35 &  5 &   117.17 &  5 &    47.48 &  5 &
32.71 &  5 &    33.15 &  5 \\
fastxgemm-n2r6s0t2   &  1628.36 &  5 &   286.01 &  5 &   209.40 &  5 &
141.51 &  5 &   145.82 &  5 \\
fastxgemm-n2r7s4t1   &  6394.82 &  2 &   971.15 &  5 &   812.15 &  5 &
768.81 &  5 &   776.56 &  5 \\
neos-1324574         &  6251.90 &  5 &  2398.62 &  5 &  2080.58 &  5 &
2064.22 &  5 &  2060.96 &  5 \\
neos-3004026-krka    &   129.58 &  5 &   262.07 &  5 &   144.61 &  5 &
685.34 &  4 &  1286.86 &  4 \\
neos-953928          &  2332.51 &  5 &  1471.52 &  5 &  1166.06 &  5 &
975.68 &  5 &   975.07 &  5 \\
neos-960392          &   850.24 &  5 &  1954.62 &  5 &  2278.05 &  5 &
2451.05 &  4 &  2454.52 &  4 \\
supportcase29        &   282.05 &  5 &   196.09 &  5 &   253.30 &  5 &
388.96 &  5 &   662.54 &  5 \\
wachplan             &  2713.70 &  5 &   906.09 &  5 &   939.21 &  5 &
849.65 &  5 &   849.25 &  5 \\
\cmidrule{2-11}
All instances combined                      &  1853.78 & 42 &   491.69 &
50 &   407.76 & 50 &   449.34 & 48 &   506.23 & 48 \\
Total time                                  & \multicolumn{2}{r}{
45:46:14     } & \multicolumn{2}{r}{ 13:24:05     } & \multicolumn{2}{r}{
12:12:07     } & \multicolumn{2}{r}{ 15:57:33     } & \multicolumn{2}{r}{
18:43:22     } \\
Symmetry time                               & \multicolumn{2}{r}{
0:00:00     } & \multicolumn{2}{r}{  0:03:44     } & \multicolumn{2}{r}{
0:03:47     } & \multicolumn{2}{r}{  0:06:56     } & \multicolumn{2}{r}{
1:56:05     } \\
Percentage time                             & \multicolumn{2}{r}{
0.0\%} & \multicolumn{2}{r}{         0.5\%} & \multicolumn{2}{r}{
0.5\%} & \multicolumn{2}{r}{         0.7\%} & \multicolumn{2}{r}{
10.3\%} \\
\bottomrule
\end{tabular}
\end{table}

\begin{table}[h]
\caption{Relevant MIPLIB 2010 and MIPLIB 2017 benchmark instances,
{\tt max}-relabeling.}
\scriptsize
\centering
\begin{tabular}{l*{10}{rr}}
\toprule
& \multicolumn{2}{c}{\tt nosym}& \multicolumn{2}{c}{\tt gen}&
\multicolumn{2}{c}{\tt group}& \multicolumn{2}{c}{\tt nopeek}&
\multicolumn{2}{c}{\tt peek}\\
\cmidrule(l{1pt}r{1pt}){2-3}
\cmidrule(l{1pt}r{1pt}){4-5}
\cmidrule(l{1pt}r{1pt}){6-7}
\cmidrule(l{1pt}r{1pt}){8-9}
\cmidrule(l{1pt}r{1pt}){10-11}
instance & time(s) & S & time(s) & S & time(s) & S & time(s) & S & time(s)
& S \\
\midrule
cod105               & --       &       &  6635.01 &  3 &  1450.64 &  5 &
1563.47 &  5 &  1629.04 &  5 \\
cov1075              & --       &       &   883.55 &  5 &   156.74 &  5
&   157.80 &  5 &   194.83 &  5 \\
fastxgemm-n2r6s0t2   & --       &       &   312.75 &  5 &   229.14 &  5
&   145.76 &  5 &   151.16 &  5 \\
fastxgemm-n2r7s4t1   & --       &       &   975.69 &  5 &   813.14 &  5
&   777.50 &  5 &   787.83 &  5 \\
neos-1324574         & --       &       &  1847.38 &  5 &  1473.25 &  5 &
1201.85 &  5 &  1288.09 &  5 \\
neos-3004026-krka    & --       &       &   485.01 &  5 &   746.75 &  4
&   519.21 &  4 &   417.15 &  4 \\
neos-953928          & --       &       &   518.51 &  5 &   426.36 &  5 &
1219.34 &  5 &   927.31 &  5 \\
neos-960392          & --       &       &  1855.01 &  4 &  1475.19 &  5 &
1740.15 &  5 &  1799.96 &  5 \\
supportcase29        & --       &       &   578.87 &  5 &  1888.74 &  4 &
1060.72 &  4 &  1679.39 &  5 \\
wachplan             & --       &       &  1965.03 &  5 &  2125.04 &  5 &
1767.51 &  5 &  1691.19 &  5 \\
\cmidrule{2-11}
All instances combined                      & --       &       &  1061.29
& 47 &   812.31 & 48 &   771.56 & 48 &   796.69 & 49 \\
Total time                                  & --            & &
\multicolumn{2}{r}{ 24:29:35     } & \multicolumn{2}{r}{ 19:32:41     } &
\multicolumn{2}{r}{ 19:18:10     } & \multicolumn{2}{r}{ 17:59:25     } \\
Symmetry time                               & --            & &
\multicolumn{2}{r}{  0:04:15     } & \multicolumn{2}{r}{  0:08:27     } &
\multicolumn{2}{r}{  0:11:46     } & \multicolumn{2}{r}{  1:31:29     } \\
Percentage time                             & --          & &
\multicolumn{2}{r}{         0.3\%} & \multicolumn{2}{r}{         0.7\%} &
\multicolumn{2}{r}{         1.0\%} & \multicolumn{2}{r}{         8.5\%} \\
\bottomrule
\end{tabular}
\end{table}

\begin{table}[h]
\caption{Relevant MIPLIB 2010 and MIPLIB 2017 benchmark instances,
{\tt min}-relabeling.}
\scriptsize
\centering
\begin{tabular}{l*{10}{rr}}
\toprule
& \multicolumn{2}{c}{nosym}& \multicolumn{2}{c}{default}&
\multicolumn{2}{c}{symre}& \multicolumn{2}{c}{nopeek}&
\multicolumn{2}{c}{peek}\\
\cmidrule(l{1pt}r{1pt}){2-3}
\cmidrule(l{1pt}r{1pt}){4-5}
\cmidrule(l{1pt}r{1pt}){6-7}
\cmidrule(l{1pt}r{1pt}){8-9}
\cmidrule(l{1pt}r{1pt}){10-11}
instance & time(s) & S & time(s) & S & time(s) & S & time(s) & S & time(s)
& S \\
\midrule
cod105               & --       &       &  6724.34 &  3 &  1450.69 &  5 &
1541.47 &  5 &  1629.39 &  5 \\
cov1075              & --       &       &   419.35 &  5 &   144.54 &  5
&   200.75 &  5 &   195.73 &  5 \\
fastxgemm-n2r6s0t2   & --       &       &   324.24 &  5 &   199.63 &  5
&   150.58 &  5 &   168.10 &  5 \\
fastxgemm-n2r7s4t1   & --       &       &   969.75 &  5 &   812.81 &  5
&   779.84 &  5 &   789.20 &  5 \\
neos-1324574         & --       &       &  1812.54 &  5 &  1962.48 &  5 &
2351.54 &  5 &  1980.54 &  5 \\
neos-3004026-krka    & --       &       &   237.08 &  5 &   131.02 &  5 &
1366.37 &  3 &   511.05 &  4 \\
neos-953928          & --       &       &   457.24 &  5 &   828.82 &  5
&   534.57 &  5 &   535.76 &  5 \\
neos-960392          & --       &       &  2166.11 &  5 &  1900.46 &  5 &
1799.27 &  5 &  1825.84 &  5 \\
supportcase29        & --       &       &   440.40 &  5 &   282.07 &  5
&   753.98 &  4 &   179.23 &  5 \\
wachplan             & --       &       &  2020.76 &  5 &  1702.72 &  5 &
1873.75 &  5 &  1844.22 &  5 \\
\cmidrule{2-11}
All instances combined                      & --       &       &   901.65
& 48 &   612.10 & 50 &   837.48 & 47 &   657.29 & 49 \\
Total time                                  & --            & &
\multicolumn{2}{r}{ 23:29:14     } & \multicolumn{2}{r}{ 14:28:11     } &
\multicolumn{2}{r}{ 20:57:14     } & \multicolumn{2}{r}{ 16:24:51     } \\
Symmetry time                               & --            & &
\multicolumn{2}{r}{  0:05:54     } & \multicolumn{2}{r}{  0:04:36     } &
\multicolumn{2}{r}{  0:08:17     } & \multicolumn{2}{r}{  0:22:54     } \\
Percentage time                             & --          & &
\multicolumn{2}{r}{         0.4\%} & \multicolumn{2}{r}{         0.5\%} &
\multicolumn{2}{r}{         0.7\%} & \multicolumn{2}{r}{         2.3\%} \\
\bottomrule
\end{tabular}
\end{table}

\begin{table}[h]
\caption{Relevant MIPLIB 2010 and MIPLIB 2017 benchmark instances,
{\tt respect}-relabeling.}
\scriptsize
\centering
\begin{tabular}{l*{10}{rr}}
\toprule
& \multicolumn{2}{c}{\tt nosym}& \multicolumn{2}{c}{\tt gen}&
\multicolumn{2}{c}{\tt group}& \multicolumn{2}{c}{\tt nopeek}&
\multicolumn{2}{c}{\tt peek}\\
\cmidrule(l{1pt}r{1pt}){2-3}
\cmidrule(l{1pt}r{1pt}){4-5}
\cmidrule(l{1pt}r{1pt}){6-7}
\cmidrule(l{1pt}r{1pt}){8-9}
\cmidrule(l{1pt}r{1pt}){10-11}
instance & time(s) & S & time(s) & S & time(s) & S & time(s) & S & time(s)
& S \\
\midrule
cod105               & --       &       &  6729.60 &  3 &  1449.94 &  5 &
1561.65 &  5 &  1629.89 &  5 \\
cov1075              & --       &       &   847.73 &  5 &   170.26 &  5
&   158.31 &  5 &   213.03 &  5 \\
fastxgemm-n2r6s0t2   & --       &       &   263.54 &  5 &   180.64 &  5
&   139.46 &  5 &   147.08 &  5 \\
fastxgemm-n2r7s4t1   & --       &       &   970.69 &  5 &   815.20 &  5
&   777.01 &  5 &   788.21 &  5 \\
neos-1324574         & --       &       &  2688.34 &  5 &  2216.11 &  5 &
1836.15 &  5 &  1759.82 &  5 \\
neos-3004026-krka    & --       &       &   374.60 &  4 &   578.92 &  4
&   189.27 &  5 &   418.20 &  4 \\
neos-953928          & --       &       &  1013.14 &  5 &  1710.39 &  5 &
1351.12 &  5 &  1346.81 &  5 \\
neos-960392          & --       &       &  1717.12 &  5 &  2152.39 &  5 &
2101.22 &  5 &  2096.46 &  5 \\
supportcase29        & --       &       &   269.59 &  5 &   272.41 &  5
&   420.73 &  5 &   467.09 &  5 \\
wachplan             & --       &       &  1019.60 &  5 &  1017.95 &  5
&   928.59 &  5 &   947.01 &  5 \\
\cmidrule{2-11}
All instances combined                      & --       &       &   970.19
& 47 &   743.01 & 49 &   639.20 & 50 &   723.93 & 49 \\
Total time                                  & --            & &
\multicolumn{2}{r}{ 25:20:14     } & \multicolumn{2}{r}{ 17:57:44     } &
\multicolumn{2}{r}{ 14:57:39     } & \multicolumn{2}{r}{ 16:48:27     } \\
Symmetry time                               & --            & &
\multicolumn{2}{r}{  0:04:26     } & \multicolumn{2}{r}{  0:19:34     } &
\multicolumn{2}{r}{  0:05:03     } & \multicolumn{2}{r}{  0:55:03     } \\
Percentage time                             & --          & &
\multicolumn{2}{r}{         0.3\%} & \multicolumn{2}{r}{         1.8\%} &
\multicolumn{2}{r}{         0.6\%} & \multicolumn{2}{r}{         5.5\%} \\
\bottomrule
\end{tabular}
\end{table}

\end{document}